\theoremstyle{plain}
\newtheorem{theorem}{Theorem}[section]
\newtheorem{prop}[theorem]{Proposition}
\newtheorem{lem}[theorem]{Lemma}
\newtheorem{coro}[theorem]{Corollary}
\theoremstyle{definition}
\newtheorem{definition}[theorem]{Definition}
\newtheorem{remark}[theorem]{Remark}
\newtheorem{question}[theorem]{Question}
\newtheorem{example}[theorem]{Example}
\newcommand{\Z}{{\mathbb Z}}
\newcommand{\R}{{\mathbb R}}
\newcommand{\N}{{\mathbb N}}
\newcommand{\C}{{\mathbb C}}
\newcommand{\mc}{\mathcal}
\newcommand{\A}{\mc A}
\newcommand{\T}{\mc T}
\newcommand{\dd}{{\mathrm{d}}}
\newcommand{\sub}{\varrho}
\newcommand{\orb}{\operatorname{orb}}
\newcommand{\bbo}{\mathds{1}} % "bbo" for "blackboard One"
\newcommand{\Id}{\mathrm{Id}}
\newcommand{\Exp}{\mathrm{Exp}}
\newcommand{\Act}{\mathrm{Act}}
\newcommand{\exend}{\hfill \ensuremath{\Diamond}}
\begin{document}

\title{Substitutions on Compact Alphabets}

 \author{Neil Ma\~nibo, Dan Rust}
\address{School of Mathematics and Statistics, The Open University, \newline
 \hspace*{\parindent}Walton Hall, Milton Keynes, MK7
6AA, UK
}
\email{neil.manibo@open.ac.uk, dan.rust@open.ac.uk }

\author{James J. Walton}
\address{Mathematical Sciences Building, University of Nottingham, \newline
\hspace*{\parindent}University Park, Nottingham, NG7 2RD, UK}
\email{Jamie.Walton@nottingham.ac.uk}

\address{
Fakult\"at f\"ur Mathematik, Universit\"at Bielefeld, \newline
\hspace*{\parindent}Postfach 100131, 33501 Bielefeld, Germany}
\email{cmanibo@math.uni-bielefeld.de}

\keywords{substitutions, infinite alphabets, positive operators, quasi-compactness, unique ergodicity}
\subjclass[2020]{37B10, 47B65, 52C23
}
\begin{abstract}
We develop a systematic approach to continuous substitutions on compact Hausdorff alphabets. Focussing on implications of irreducibility and primitivity, we highlight important features of the topological dynamics of their (generalised) subshifts. We then reframe questions from ergodic theory in terms of spectral properties of a corresponding substitution operator. This requires an extension of standard Perron--Frobenius theory to the setting of Banach lattices. As an application, we identify computable criteria that guarantee quasi-compactness of the substitution operator. This allows unique ergodicity to be verified for several classes of examples. For instance, it follows that every primitive and constant length substitution on an alphabet with an isolated point is uniquely ergodic, a result which fails when there are no isolated points.
\end{abstract}

\maketitle

\section{Introduction}\label{SEC:intro}
Substitutions on infinite alphabets and  tilings with infinite local complexity (ILC) have been steadily gaining attention in the study of symbolic dynamics and aperiodic order \cite{DOP:self-induced, PFS:fusion-ILC, Frett-Richard, SS:discrepancy, RY-profinite, SS:multiscale}. Indeed, infinite alphabets must naturally be considered when recoding non-uniformly recurrent sequences by return words \cite{EG-schroedinger}, or when performing the balanced-pair and overlap algorithms for a non-Pisot substitution \cite{HS:pisot}. In the context of automatic sequences, constant length substitutions on infinite alphabets are the natural setting for regular sequences---sequences admitting a finitely-generated $k$-kernel \cite[Thm.~11]{AS-regular-II}---as well as profinite automatic sequences e.g., those defined over the $p$-adic integers $\Z_p$ \cite[Sec.~4]{RY-profinite}. It has also been shown that $(X,T)$ is a self-induced minimal Cantor system if and only if it is conjugate to a substitution subshift on a zero-dimensional alphabet whose substitution is primitive, recognisable and aperiodic \cite[Thms.~24 and 25]{DOP:self-induced}.

Given their ubiquity, it is important to develop a systematic approach to study such substitutions and their subshifts in terms of their dynamics, ergodic theory and spectral theory. Notable studies in this direction include  those by Queff\'{e}lec \cite{Queffelec}, Ferenczi \cite{Ferenczi}, Frank and Sadun \cite{PFS:fusion-ILC}, and Durand, Ormes and Petite \cite{DOP:self-induced}. The aim of this work is both to establish a framework for further study and to better understand in which ways the infinite alphabet landscape differs from the familiar world of finite alphabets. One novelty of our approach is the incorporation of techniques from the theory of positive operators on Banach lattices.

To obtain more general results in the infinite alphabet setting, it is necessary to impose some standing assumptions. Our alphabets will be equipped with a compact Hausdorff topology with the substitution being a continuous map. These assumptions are natural ones to impose so as to retain some of the structure enjoyed in the finite alphabet setting. Interestingly, most of our results do not require the alphabet to be equipped with a metric, generalising the results and often making the proofs conceptually simpler.

Alphabets, or labels, with compact Hausdorff topologies are also prevalent in the setting of higher dimensional ILC tiling substitutions. These include the Conway--Radin pinwheel substitution \cite{R:pinwheel}, where infinitely many tiles appear up to translation, but up to rigid motion there is just one tile, so that the prototile space is homeomorphic to a disjoint union of two circles.

Our main focus will be on symbolic substitutions, which generate bi-infinite words.
Such symbolic substitutions can be made geometric, as so-called \emph{stone inflations} of \(\R\), by associating to each letter \(a\) an interval tile of length \(\ell(a) \geq 0\) (although we often require that each \(\ell(a) > 0\)). A stone inflation first inflates the support of a tile by a fixed inflation factor \(\lambda >1\) and then decomposes the inflated support exactly into the substituted tiles, meaning here that \(\lambda \ell(a) = \ell(a_1) + \ell(a_2) + \cdots + \ell(a_n)\), where the substitution of \(a\) is \(\sub(a) = a_1 a_2 \cdots a_n\). If such a function \(\ell\) exists, and is continuous on the alphabet, we call it a natural length function. We find conditions on the substitution that imply existence (and uniqueness, up to scaling) of a natural length function but also examples that do not permit one (Examples \ref{exp: non-growing, no length} and \ref{exp: growing, no length}). This is different to the finite alphabet setting, where substitutions always permit natural length functions.

Many of the above cited works mention the difficult problem of establishing unique ergodicity for substitution dynamical systems in the infinite alphabet setting. It is sometimes possible to show this for a particular example, but finding general sufficient criteria is another matter. Beyond the constant-length setting \cite{Queffelec,RY-profinite}, little is known.

One of the principal aims of this work is to establish the first general criterion for unique ergodicity without requiring the substitution to have constant length. This entails considering the substitution operator $M$ on the space $C(\mathcal{A})$ of continuous real-valued functions on the alphabet. The space $C(\mathcal{A})$ is a Banach lattice, allowing us to benefit from known results on positive, mean ergodic and power convergent operators on Banach lattices \cite{Abdelaziz, Eisner, Karlin, SchaeferBook, Schaefer60}. Even with the abundance of literature on such operators, we emphasise that the infinite-dimensional situation is far from straightforward. In particular, primitivity no longer guarantees unique ergodicity; compare \cite[Prop.~31]{DOP:self-induced}. Moreover, $M$ is  a compact operator essentially only when the alphabet is finite (Proposition \ref{prop: non-compact}), and hence the classical Kre\u{\i}n--Rutman theorem \cite{KR-compact} for compact operators is not applicable. We mention this specific theorem as it is one of the simplest generalisations of Perron--Frobenius theory in the infinite dimensional setting. 

However, similar results generalise to quasi-compact operators, where the spectral radius \(r\) is a pole of the resolvent and outside of the essential spectrum (see \cite[Thm.~4.1.4]{Nieberg-BL} and the following remark). More generally, unique ergodicity follows from the weaker condition of strong power convergence of the operator \(T = M/r\), meaning that \(T^n f\) converges strongly for each \(f \in C(\A)\); the result below follows from Theorem \ref{thm:inv measures - tiling} and Corollary \ref{cor: inv measures - subshift} (see also Theorem~\ref{thm: primitive+qc => uniquely ergodic}):

\begin{theorem}
Let $\sub$ be a primitive substitution on a compact Hausdorff alphabet with corresponding subshift $X_\sub$ and substitution operator $M$. Let $r$ be the spectral radius of $M$. If $T = M/r$ is strongly power convergent, then $X_\sub$ is uniquely ergodic.
\end{theorem}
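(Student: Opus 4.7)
The overall strategy is to deduce, from quasi-compactness of $T = M/r$ together with primitivity, a Perron--Frobenius-type spectral decomposition $T^n \to P$ onto a one-dimensional invariant subspace, and then to translate this convergence into uniform letter and word frequencies along $\sub^n(a)$. Since primitivity should have already been shown to imply minimality of $X_\sub$, unique ergodicity will then follow from an Oxtoby-style criterion.

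\emph{Step 1: peripheral spectrum.} By quasi-compactness, the essential spectral radius of $T$ is strictly less than $1$, so the part of $\sigma(T)$ on the unit circle consists of finitely many poles of the resolvent of finite algebraic multiplicity. Positive operator theory on the Banach lattice $C(\mathcal{A})$ then supplies a positive eigenfunction $h = Th$, so $1$ is indeed a peripheral eigenvalue. The delicate point is to invoke primitivity of $\sub$ to upgrade this to the statement that $1$ is a \emph{simple} eigenvalue of $T$ and that no other spectral values lie on the unit circle, ruling out the cyclic peripheral spectrum a general positive operator might admit.

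\emph{Step 2: convergence of the iterates.} Once the peripheral spectrum is $\{1\}$ and $1$ is a simple pole, the corresponding spectral projection $P$ has rank one and $T^n = P + R_n$ with $\|R_n\| \to 0$ exponentially. The projection factors as $Pf = \nu(f)\,h$, where $\nu \in C(\mathcal{A})^*$ is the dual Perron eigenfunctional, a finite positive Radon measure on $\mathcal{A}$. Consequently, for every $f \in C(\mathcal{A})$, the sequence $T^n f$ converges uniformly on $\mathcal{A}$ to $\nu(f)\,h$.

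\emph{Step 3: from spectral convergence to frequencies.} The normalised counts $|\sub^n(a)|_b / |\sub^n(a)|$ of occurrences of a letter $b$ in $\sub^n(a)$ are encoded by $M^n$ applied to appropriate continuous test functions, so the uniform convergence of $T^n$ to $P$ yields letter frequencies that exist and are independent of $a$. To extend from letters to arbitrary legal words of length $k$ (which is what is needed to bootstrap to convergence of Birkhoff averages of general continuous functions on $X_\sub$), I would apply the same machinery to the induced primitive substitution on the compact Hausdorff alphabet of admitted length-$k$ words, checking that quasi-compactness is inherited by the higher-block substitution operator. Uniform word frequencies on a minimal subshift then force unique ergodicity.

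\emph{Main obstacle.} The hardest step is Step~1: in finite dimensions the simplicity and isolation of the Perron eigenvalue is immediate from classical Perron--Frobenius for primitive non-negative matrices, but in the Banach lattice setting it requires combining quasi-compactness, positivity and an irreducibility-type consequence of primitivity of $\sub$ to exclude both higher-dimensional $1$-eigenspaces and any further peripheral eigenvalues. Translating the combinatorial primitivity hypothesis into the correct Banach-lattice irreducibility statement for $T$ is where the substitutional structure must really do its work.
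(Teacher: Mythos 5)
Your Steps 1 and 2 follow the paper's route almost exactly: quasi-compactness forces $r_{\mathrm{ess}}(T)<1$, so $1=r(T)$ lies outside the Browder/essential spectrum and is a pole of the resolvent of finite rank, hence an eigenvalue with a positive eigenfunction (the paper's Lemma~\ref{lem: qc => length function}); irreducibility of every power $\sub^k$ (a consequence of primitivity) then forces the fixed space to be one-dimensional, spanned by a strictly positive $\ell$, kills the rest of the peripheral spectrum, and yields uniform power convergence $T^n\to P=\nu(\cdot)\,\ell$ (Proposition~\ref{prop: primitive+qc => length}, via the decomposition property of the cone and the cited results of Abdelaziz and Karlin). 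Up to this point your plan is sound and is the same as the paper's.

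The gap is in Step 3. The paper does \emph{not} pass through word frequencies: it invokes the Priebe-Frank--Sadun fusion correspondence (Theorems~\ref{thm:fusion inv meas} and \ref{thm:unique ergod FS}), under which translation-invariant measures on the geometric hull correspond to sequences $(\mu_n)$ of volume-normalised measures \emph{on the alphabet itself} satisfying $T'\mu_n=\mu_{n-1}$; strong power convergence collapses this inverse limit to a point, and the suspension-flow correspondence (with roof function $\ell$ bounded away from zero) transfers unique ergodicity back to $X_\sub$. Your proposed replacement --- pass to the higher-block substitution on $\mc{L}^k(\sub)$ and rerun the spectral argument --- leaves two concrete claims unestablished. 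First, ``quasi-compactness is inherited by the higher-block substitution operator'' is asserted but not proved, and it is not obviously true: the criterion of Theorem~\ref{thm: quasi-compact subs} involves a finite set $P$ of letters hit by every supertile, and there is no reason a finite set of legal $k$-words with the analogous property exists just because one exists for letters (e.g.\ a single isolated letter $a\in\A$ need not yield any isolated, or even finitely many, legal $k$-words containing $a$). Second, for a non-discrete alphabet, ``uniform word frequencies on a minimal subshift force unique ergodicity'' needs to be made precise as uniform weak-$*$ convergence of empirical measures on the compact spaces $\mc{L}^k(\sub)$ tested against continuous functions, followed by a density argument for finitely-determined functions in $C(X_\sub)$; the classical Oxtoby/Queff\'elec argument counts occurrences of individual words and does not transfer verbatim. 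Neither point is fatal in principle, but as written Step 3 is where the proof is incomplete, and it is precisely the work that the fusion formalism is imported to avoid.
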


We show how strong power convergence of \(T\) follows from certain conditions on the substitution; see, for example, Theorem \ref{thm: CL UE} in the constant length case. In the non-constant length case, the stronger condition of quasi-compactness can be verified for large classes of examples by applying a computable criterion detailed in Theorem \ref{thm: quasi-compact subs}. This condition appears to be typically satisfied when the substitution is primitive and the alphabet has at least one isolated point. For example, it easily follows from Theorem \ref{thm: quasi-compact subs} that every primitive and constant length substitution on an alphabet with an isolated point defines a quasi-compact operator and thus has uniquely ergodic hull, see Example \ref{ex: CL quasi-compact}. Examples from \cite{DOP:self-induced} show that there are counter-examples to this in alphabets with no isolated points. In the non-compact setting of substitutions on countable alphabets, sufficient conditions for unique ergodicity are provided in \cite{DFMV}.

Our examples show that a subtle range of behaviours can occur in the infinite alphabet setting: whilst some have \(T\) quasi-compact and uniformly power convergent, others demonstrate that it is not uncommon for \(T\) to fail to be uniformly power convergent (Example \ref{ex: CL not qc}), even when \(T\) is strongly power convergent so that the associated subshift is uniquely ergodic. Moreover, the spectral radius $r$ of $M$, which is the infinite-dimensional analogue of the Perron--Frobenius eigenvalue for finite matrices, need not be an algebraic number; see Remark~\ref{rem: trans inflation}.

The structure of this paper is as follows. In Section \ref{SEC:dynamics} we present general properties satisfied by subshifts over compact alphabets. In Section \ref{SEC:substitutions} we define notions related to substitutions and we present immediate consequences of continuity. We also discuss the associated substitution subshift and investigate its language. The substitution operator is introduced in Section \ref{SEC:tile lengths}, and its operator-theoretic and spectral properties are discussed. Section~\ref{SEC:invariant} deals with invariant measures on the subshift and its relation to strong power convergence of the (normalised) substitution operator $T$. Here, we exploit a correspondence established in \cite{PFS:fusion-ILC} in the fusion tiling formalism. Applications of our results, including sufficient conditions for unique ergodicity of the subshift and representative examples are discussed in Section~\ref{SEC:applications}. Section~\ref{SEC:discrepancy} deals with asymptotic behaviour of discrepancy estimates for certain substitutions whose corresponding operator has a spectral gap. To simplify the exposition, and because several results consider the existence of natural length functions, we mostly work in the setting of one-dimensional and symbolic substitutions. However, in Section \ref{sec: higher dimensions}, we briefly outline which results here generalise to higher dimensions. In particular, Theorem \ref{thm: CL UE} on a criterion for constant length substitutions to have unique ergodicity naturally extends to higher dimensions, which provides a straightforward proof of unique ergodicity of the dynamical system associated with the Conway--Radin pinewheel tilings. Numerous examples are provided throughout the text to demonstrate our results.

\section{Topology and dynamics}\label{SEC:dynamics}
Most of the results in this section are routine or simple exercises, and are likely already known in the wider literature, for instance in works such as \cite{A:auslander}. Nevertheless, these results may be unfamiliar to the reader in the general setting of infinite alphabets and lay the groundwork for our main focus, substitutions on compact Hausdorff alphabets. We therefore include them, often without proof, in order to keep our work as self-contained as possible.

Let $\mc A$ be a compact Hausdorff space that we call an \emph{alphabet} and whose elements we call \emph{letters}. Let $\mc A^+ = \bigsqcup_{n \geq 1} \mc A^n$ denote the set of all finite (non-empty) \emph{words} over the alphabet $\mc A$, where $\mc A^n$ has the product topology and $\mc A^+$ is topologised as the disjoint union. Hence $\mc A^n$ is a clopen subset of $\mc A ^+$ for each $n \geq 1$. For ease of notation we write
\[
u_1 u_2 \cdots u_n \coloneqq (u_1, u_2, \ldots, u_k).
\]
Let $\mc A^\ast = \mc A^+ \sqcup \mc A^0 = \mc A^+ \sqcup \{\varepsilon\}$, where $\varepsilon$ is the \emph{empty word}.
\emph{Concatenation} is a binary operation $\mc A^\ast \times \mc A^\ast \to \mc A^\ast$ given by $(\varepsilon, u) \mapsto u$, $(u, \varepsilon)\mapsto u$ and
\[
(u_1 \cdots u_n, v_1 \cdots v_m) \mapsto u_1 \cdots u_n v_1 \cdots v_m,
\]
where $u = u_1 \cdots u_n \in \mc A^n$ and $v = v_1 \cdots v_m \in \mc A^m$. We write $uv$ as shorthand for the concatenation of $u$ and $v$, which is a continuous operation. If there is a $j \geq 0$ such that $u_i = v_{j+i}$ for all $1 \leq i \leq n$, then we call $u$ a \emph{subword} of $v$ and write $u \triangleleft v$. We have the continuous projection map $p_i \colon \mc A^n \to \mc A$ to the $i$th letter of a word.

Let $\mc A^{\Z}$ denote the set of bi-infinite sequences over $\mc A$ with the product topology, which is compact by Tychonoff's theorem. We use a vertical line $|$ to denote the position between the $-1$st and $0$th element of a bi-infinite sequence, and so we write $w = \cdots w_{-2} w_{-1} | w_0 w_1 \cdots$. For $j \leq k$, we let $w_{[j,k]}$ denote the subword $w_j\cdots w_k$. We define the projection $p_{[j,k]} \colon \mc A^\Z \to \mc A^{k-j+1}$ by $p_{[j,k]}(w) = w_{[j,k]}$. This is also clearly a continuous function.

The function $\sigma \colon \mc A^{\Z} \to \mc A^{\Z}$ given by $\sigma(x)_i = x_{i+1}$ is a homeomorphism called the \emph{(left) shift} map. The pair $(\mc A^{\Z}, \sigma)$ is called the \emph{full shift} over the alphabet $\mc A$. Let $| \cdot | \colon \mc A^* \to \N_0 = \N \cup \{0\}$ be the \emph{word-length function} mapping $u \mapsto n$ for every $u \in \mc A^n$. The word-length function is continuous because $| \cdot |^{-1}(\{n\}) = \left\{ u \in \mc A^* \mid |u|=n\right\} = \mc A^n$ is open in $\A^\ast$ for every $n$.

\begin{definition}
Let $X \subseteq \mc A^\Z$. If $\sigma(X) = X$,  we say that $X$ is \emph{shift-invariant} or \emph{$\sigma$-invariant}. If $X$ is a non-empty, closed, shift-invariant subspace of $\mc A^\Z$ then we call $X$ a \emph{subshift}. We call $X$ a \emph{minimal} subshift if $X$ contains no proper subshift $\varnothing \neq Y \subsetneq X$.
\end{definition}

\begin{definition}
Let $\mc{L} \subseteq \A^\ast$. For \(n \in \N_0\), we let $\mc{L}^n = \mc{L} \cap \A^n$. We call $\mc{L}$ a \emph{language} if each $\mc{L}^n \subseteq \A^n$ is non-empty, closed (equivalently, $\mc{L} \subseteq \A^\ast$ is closed), and $\mc{L}$ is closed under taking subwords. The \emph{subshift associated with $\mc{L}$} is the subset
\[
X_\mc{L} \coloneqq \{w\in \A^\Z \mid w_{[j,k]} \in \mc{L} \text{ for all } j \leq k\}.
\]
\end{definition}

\begin{prop} \label{prop: language -> subshift}
For any language $\mc{L}$, we have that $X_\mc{L}$ is a subshift.
\end{prop}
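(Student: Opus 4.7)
The plan is to verify the three defining conditions of a subshift for $X_\mathcal{L}$: shift-invariance, closedness, and non-emptiness. The first two are essentially bookkeeping; non-emptiness is the substantive step and rests on a compactness argument.

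Shift-invariance is immediate from the definition. If $w \in X_\mathcal{L}$, then $(\sigma w)_{[j,k]} = w_{[j+1,k+1]} \in \mathcal{L}$, so $\sigma(w) \in X_\mathcal{L}$; conversely, defining $w'_i \coloneqq w_{i-1}$ gives a point of $X_\mathcal{L}$ with $\sigma(w') = w$, so $\sigma(X_\mathcal{L}) = X_\mathcal{L}$. For closedness I would write
\[
X_\mathcal{L} = \bigcap_{j \leq k} p_{[j,k]}^{-1}\bigl(\mathcal{L}^{k-j+1}\bigr).
\]
Each $\mathcal{L}^n \subseteq \A^n$ is closed by the language hypothesis and each projection $p_{[j,k]}$ is continuous, so $X_\mathcal{L}$ is an intersection of closed sets in $\A^\Z$.

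The main step is non-emptiness, which I would prove by a finite-intersection argument applied to the nested family of closed cylinders
\[
Y_n \coloneqq p_{[-n,n]}^{-1}\bigl(\mathcal{L}^{2n+1}\bigr).
\]
Each $Y_n$ is non-empty: the hypothesis gives some $u \in \mathcal{L}^{2n+1}$, which one extends to a bi-infinite sequence by filling the remaining positions with a fixed letter. The closure of $\mathcal{L}$ under subwords then gives the nesting $Y_m \subseteq Y_n$ for $m \geq n$, since if $w_{[-m,m]} \in \mathcal{L}$ then its subword $w_{[-n,n]}$ is as well. Tychonoff's theorem makes $\A^\Z$ compact, so $\bigcap_n Y_n$ is non-empty. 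Finally, for any $w$ in this intersection and any $j \leq k$, choosing $n$ with $[j,k] \subseteq [-n,n]$ exhibits $w_{[j,k]}$ as a subword of $w_{[-n,n]} \in \mathcal{L}$, and subword-closure gives $w_{[j,k]} \in \mathcal{L}$. Hence $w \in X_\mathcal{L}$, so $X_\mathcal{L} \neq \varnothing$.

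There is no real obstacle here. The one subtlety worth flagging is that closure of $\mathcal{L}$ under subwords is invoked twice: first to establish the nesting $Y_m \subseteq Y_n$ needed for the compactness argument, and then to upgrade membership in $\bigcap_n Y_n$ (which only controls centred windows) to membership in $X_\mathcal{L}$ (which demands that every window lies in $\mathcal{L}$).
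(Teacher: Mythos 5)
Your proof is correct and follows essentially the same route as the paper's: extend a word of $\mathcal{L}^{2n+1}$ by a repeated letter to show each centred cylinder is non-empty, use subword-closure to get nesting, and conclude by compactness of $\A^\Z$. The only difference is cosmetic --- you are slightly more explicit than the paper in noting that subword-closure is needed both for the nesting and for identifying $\bigcap_n Y_n$ with $X_\mathcal{L}$, which the paper's identity $X_\mathcal{L} = \bigcap_{n} X_{[-n,n]}$ uses implicitly.
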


\begin{definition}
Let $w \in \mc A^\Z$ and let $\orb(w) \coloneqq \{\sigma^n(w) \mid n \in \Z\}$ denote the orbit of $w$ under the shift map $\sigma$. We let $X_w \coloneqq  \overline{\orb(w)} \subseteq \mc A^\Z$ denote the \emph{orbit closure} of $w$. Let $\mc W^n(w)$ denote the set of all $n$-letter subwords appearing in $w$. We write $\mc W(w) \coloneqq \bigsqcup_{n \geq 0} \mc W^n(w)$, seen as a subspace of $\mc A^\ast$. We define $\mc{L}^n(w) = \overline{\mc{W}^n(w)}$ and similarly the \emph{language} of $w$ as $\mc{L}(w) = \overline{\mc{W}(w)}$.
\end{definition}

\begin{remark} \label{rem: language partition}
Of course, we have that
\[
\mc{L}(w) = \bigsqcup_{n \geq 0} \mc{L}^n(w)
\]
and $\mc{L}^n(w) = \mc{L}(w) \cap \A^n$, since each of the $\mc A^n$ are disjoint clopen subsets of $\mc A^\ast$.
\end{remark}

\begin{definition}
For a subshift $X \subseteq \mc A^\Z$ we let $\mc L^n(X)$ denote the set of all $n$-letter subwords appearing in elements of $X$, that is, $\mc{L}^0(X) = \{\varepsilon\}$ and
\[
\mc L^n(X) \coloneqq \left\{u \in \mc A^n \mid \exists w \in X, \:\: j \leq k \ \text{ such that }\ u = w_{[j,k]}\right\}.
\]
We write $\mc L(X) \coloneqq \bigsqcup_{n \geq 0} \mc L^n(X)$, seen as a subspace of $\mc A^\ast$, and call $\mc L(X)$ the \emph{language} of $X$.
\end{definition}

The following results are useful and left as exercises to the reader.
\begin{prop} \label{prop: are languages}
If $w \in \A^\Z$ then $\mc{L}(w)$ is a language and, for a subshift $X \subseteq \A^\Z$, we have that $\mc{L}(X)$ is a language. 
\end{prop}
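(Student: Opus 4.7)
Plan: The definition requires three conditions of a language $\mc{L} \subseteq \A^\ast$: each slice $\mc{L}^n$ is non-empty, closed in $\A^n$, and the whole set is closed under taking subwords. I will verify these for $\mc{L}(w)$ and for $\mc{L}(X)$ separately, since the arguments differ in character.

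For $\mc{L}(w)$: since $\A^n$ is clopen in $\A^\ast$, the closure $\mc{L}^n(w) = \overline{\mc{W}^n(w)}$ is automatically closed in $\A^n$. Non-emptiness is immediate, as any length-$n$ window of $w$ yields an element of $\mc{W}^n(w)$. The main step is closure under subwords: given $u \in \mc{L}^n(w)$ and a designated length-$m$ subword $v$ of $u$ at positions $[j+1, j+m]$, I would pick a net $(u^\alpha) \subseteq \mc{W}^n(w)$ with $u^\alpha \to u$, and let $v^\alpha$ denote the corresponding length-$m$ subword of $u^\alpha$. The extraction map $\A^n \to \A^m$ picking out coordinates $[j+1, j+m]$ is continuous, being a product of the coordinate projections $p_i$. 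Hence $v^\alpha \to v$, and since each $v^\alpha$ is itself a subword of $w$, we get $v \in \overline{\mc{W}^m(w)} = \mc{L}^m(w)$.

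For $\mc{L}(X)$: closure under subwords is essentially built into the definition, since any subword of $w_{[j,k]}$ is itself of the form $w_{[j', k']}$ with $j \leq j' \leq k' \leq k$. The substantive step is closedness of $\mc{L}^n(X)$. Here I would exploit shift-invariance of $X$: every window $w_{[j, j+n-1]}$ equals $(\sigma^j w)_{[0, n-1]}$, and $\sigma^j w \in X$. Thus the potentially infinite union $\bigcup_{j \in \Z} p_{[j, j+n-1]}(X)$ collapses to the single continuous image $p_{[0, n-1]}(X)$, which is closed in $\A^n$ by compactness of $X$. Non-emptiness follows from $X \neq \varnothing$.

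The main obstacle is conceptually mild in each case. For $\mc{L}(w)$ it is that taking a subword must commute with limits, which reduces to continuity of coordinate projections on the product space. For $\mc{L}(X)$ it is the observation that without shift-invariance, $\mc{L}^n(X)$ would only be an \emph{a priori} uncountable union of closed subsets of $\A^n$ and not obviously closed; shift-invariance of $X$ is exactly what collapses this union to a single image, so that closedness becomes automatic by compactness.
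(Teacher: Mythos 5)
Your proof is correct and follows essentially the same route as the paper's: closedness and non-emptiness of each $\mc{L}^n$ are immediate, subword-closure for $\mc{L}(w)$ reduces to continuity of the coordinate-extraction map $\A^n \to \A^m$ (the paper phrases this via the closed-map identity $p(\overline{\mc{W}^n}) = \overline{p(\mc{W}^n)}$ rather than nets, but the content is identical), and for $\mc{L}(X)$ shift-invariance collapses everything to the single compact image $p_{[0,n-1]}(X)$. No gaps.
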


\begin{prop} \label{prop: language of orbit}
For $w \in \A^\Z$ we have that $X_w$ is a subshift with $\mc{L}(X_w) = \mc{L}(w)$.
\end{prop}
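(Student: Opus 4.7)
The plan is to verify first that $X_w$ is a subshift, which is essentially immediate, and then to establish the language identity by proving the two inclusions $\mc{L}(w) \subseteq \mc{L}(X_w)$ and $\mc{L}(X_w) \subseteq \mc{L}(w)$ separately.

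First I would observe that $X_w$ is non-empty (it contains $w$), closed by construction, and shift-invariant: since $\sigma$ is a homeomorphism it commutes with closure, so $\sigma(X_w) = \sigma(\overline{\orb(w)}) = \overline{\sigma(\orb(w))} = \overline{\orb(w)} = X_w$. Hence $X_w$ is a subshift.

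For the forward inclusion $\mc{L}(w) \subseteq \mc{L}(X_w)$, note that every finite subword of $w$ is tautologically a subword appearing in some element of $X_w$ (namely in $w$ itself), so $\mc{W}(w) \subseteq \mc{L}(X_w)$. By Proposition \ref{prop: are languages}, $\mc{L}(X_w)$ is a language and is therefore closed in $\A^\ast$. Taking closures yields $\mc{L}(w) = \overline{\mc{W}(w)} \subseteq \mc{L}(X_w)$.

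For the reverse inclusion, fix $u \in \mc{L}^n(X_w)$. By definition $u$ is a subword of some $x \in X_w$, and after applying a suitable power of $\sigma$ (using shift-invariance of $X_w$) we may assume $u = p_{[1,n]}(y)$ for some $y \in X_w$. Since $y \in \overline{\orb(w)}$, there is a net $(\sigma^{k_\alpha}(w))_\alpha$ converging to $y$ in $\A^\Z$. Continuity of $p_{[1,n]}$ gives $p_{[1,n]}(\sigma^{k_\alpha}(w)) \to p_{[1,n]}(y) = u$, and each of the terms $p_{[1,n]}(\sigma^{k_\alpha}(w))$ lies in $\mc{W}^n(w)$ by definition of the latter. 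Hence $u \in \overline{\mc{W}^n(w)} = \mc{L}^n(w) \subseteq \mc{L}(w)$. The main subtlety to watch for is that $\A$ is only assumed compact Hausdorff, not metrisable, so one must argue with nets rather than sequences; but since the projections $p_{[j,k]}$ are continuous this causes no real difficulty.
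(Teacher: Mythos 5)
Your proof is correct and follows essentially the same route as the paper: both arguments reduce the language identity to $\mc L^n(X_w) = p_{[1,n]}(\overline{\orb(w)}) = \overline{p_{[1,n]}(\orb(w))} = \mc L^n(w)$ using shift-invariance, continuity of $p_{[1,n]}$ and compactness (you merely split the middle equality into two inclusions, one via closedness of $\mc L(X_w)$ and one via a net argument). Your shift-invariance argument, using that a homeomorphism commutes with closure, is a slightly slicker packaging of the paper's pointwise open-set argument but is the same idea.
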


\begin{remark} \label{rem: all letters}
We assume in this section, without loss of generality, that all letters of $\A$ appear in some word of the subshift $X$. Indeed, by shift-invariance, the set of all letters that appear is given by $\mc{L}^1(X) = p_{[0,0]}(X) \subseteq \A$. Since $X \subseteq \A^\Z$ is closed, we have that $X$ is compact and thus so is $\mc{L}^1(X)$ by continuity of $p_{[0,0]}$. So we may assume that all letters appear by restricting the alphabet to $\mc{L}^1(X)$, which is still compact and Hausdorff.
\end{remark}

Recall that a topological dynamical system $(X,f)$ is called \emph{topologically transitive} if for all non-empty open sets $U, V \subseteq X$, there exists an $n \in \Z$ such that $f^n(U) \cap V \neq \varnothing$.

The following lemmas are routine.

\begin{lem}\label{LEM:DO-implies-TT}
Let $X \subseteq \mc A^\Z$ be a subshift. If there exists an element $w \in X$ with a dense orbit, then the subshift $X$ is topologically transitive.
\end{lem}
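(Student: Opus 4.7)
The plan is to translate the density hypothesis directly into the topological transitivity condition, since an orbit-dense point visits every open set. Fix arbitrary non-empty open sets $U, V \subseteq X$. Because $\orb(w)$ is dense in $X$, we have $\orb(w) \cap U \neq \varnothing$ and $\orb(w) \cap V \neq \varnothing$, so there exist integers $n_1, n_2 \in \Z$ with $\sigma^{n_1}(w) \in U$ and $\sigma^{n_2}(w) \in V$.

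Setting $n = n_2 - n_1$, the computation $\sigma^n(\sigma^{n_1}(w)) = \sigma^{n_2}(w)$ shows that $\sigma^{n_2}(w) \in \sigma^n(U) \cap V$, so the intersection is non-empty as required. Since $U$ and $V$ were arbitrary, $(X, \sigma)$ is topologically transitive. There is no genuine obstacle here: the only thing to note is that $\sigma$ is a homeomorphism of $X$ (by shift-invariance of $X$, as established when defining subshifts), so $\sigma^n(U)$ is a well-defined open set in $X$ for every $n \in \Z$, and the argument goes through without regard to the sign of $n$.
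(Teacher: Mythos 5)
Your proof is correct and is essentially identical to the paper's: both pick shifts $n_1, n_2$ landing $w$ in $U$ and $V$ respectively and observe that $\sigma^{n_2-n_1}(U)$ meets $V$. The extra remark that $\sigma$ is a homeomorphism (so negative powers are harmless) is a sensible, if implicit in the paper, clarification.
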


\begin{lem}
Let $X \subseteq \mc A^\Z$ be a subshift. If there exists an element $w \in X$ with dense orbit then $\mc A$ is separable.
\end{lem}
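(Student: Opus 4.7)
The plan is to use $w$ itself to produce a countable dense subset of $\mc A$, namely the set $\{w_i \mid i \in \Z\}$ of letters actually appearing in $w$. The argument is essentially an assembly of results already established.

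First I would observe that, since $w$ has dense orbit in $X$, the orbit closure $X_w = \overline{\orb(w)}$ coincides with $X$. Proposition \ref{prop: language of orbit} then gives $\mc L(X) = \mc L(X_w) = \mc L(w)$, and restricting to length-one words yields
\[
\mc L^1(X) = \mc L^1(w) = \overline{\mc W^1(w)} = \overline{\{w_i \mid i \in \Z\}}.
\]
By the standing convention adopted in Remark \ref{rem: all letters}, we may assume without loss of generality that $\mc L^1(X) = \mc A$, and therefore $\mc A = \overline{\{w_i \mid i \in \Z\}}$. Since the set on the right is indexed by the countable set $\Z$, this exhibits $\mc A$ as the closure of a countable set, so $\mc A$ is separable.

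There is no real obstacle here; the lemma is a direct corollary of Proposition \ref{prop: language of orbit} combined with the remark's convention. The only conceptual point worth flagging is why passing to the alphabet of letters appearing in $X$ is harmless: continuity of $p_0 \colon \mc A^\Z \to \mc A$ together with compactness of $X$ ensures that $\mc L^1(X) = p_0(X)$ is a compact (hence closed) Hausdorff subspace of $\mc A$, and all subshifts and prior propositions transfer to this restricted alphabet without change.
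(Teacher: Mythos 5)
Your proof is correct and follows essentially the same route as the paper: both exhibit the countable set $\{w_i \mid i \in \Z\}$ of letters of $w$ as dense in $\mc A$, relying on the standing convention of Remark \ref{rem: all letters} that $\mc L^1(X) = \A$. The only cosmetic difference is that you verify density by citing Proposition \ref{prop: language of orbit} (so $\A = \mc L^1(X_w) = \overline{\mc W^1(w)}$), whereas the paper checks directly that every non-empty open cylinder set meets the orbit of $w$; the underlying ingredients (continuity of the projection, compactness, density of the orbit) are the same.
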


Since $X$ is compact Hausdorff, by the Baire category theorem we have that $X$ is Baire and thus of second category. Hence, (by an identical proof to \cite[Prop.~1.1]{S:baire}) we have a converse to Lemma \ref{LEM:DO-implies-TT}:

\begin{prop}\label{PROP:do-iff-tt}
Let $X \subseteq \A^\Z$ be a subshift. There exists an element $w \in X$ with a dense orbit if and only if the subshift $X$ is topologically transitive and $\A$ is separable.
\end{prop}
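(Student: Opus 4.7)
The forward implication was established in Lemma~\ref{LEM:DO-implies-TT} and the lemma immediately preceding this proposition, so only the converse remains: assuming $X$ is topologically transitive and $\A$ is separable, produce a point $w \in X$ with dense orbit. The plan is the classical Baire category argument, following the proof of \cite[Prop.~1.1]{S:baire}.

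The first task is to build a countable family $\mc V = \{V_1, V_2, \ldots\}$ of non-empty open subsets of $X$ which forms a $\pi$-base, in the sense that every non-empty open $U \subseteq X$ contains some $V_k$. Fix a countable dense subset $D \subseteq \A$. Basic open subsets of $X$ are the restrictions of cylinders in $\A^\Z$ determined by finitely many coordinate-wise open constraints in $\A$, and each $\A^n$ inherits a countable dense subset from $D^n$ since finite products preserve separability. Using these one extracts a countable collection $\mc V$ of cylinders-in-$X$ that refines inside any given non-empty open subset of $X$. With $\mc V$ in hand, for each $k$ set $W_k \coloneqq \bigcup_{n \in \Z} \sigma^n(V_k)$; each $W_k$ is open as a union of open sets, and topological transitivity gives density directly, since for any non-empty open $U \subseteq X$ there is $n$ with $\sigma^n(V_k) \cap U \neq \varnothing$, so $W_k \cap U \neq \varnothing$. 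Since $X$ is compact Hausdorff and therefore Baire, the intersection $\bigcap_k W_k$ is a dense $G_\delta$ and in particular non-empty. Any $w$ in this intersection has its orbit meeting every $V_k$; by the $\pi$-base property, $\orb(w)$ then meets every non-empty open subset of $X$, so $\orb(w)$ is dense.

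The main obstacle is the construction of the countable $\pi$-base, because $\A$ is only assumed separable and need not be second countable or metrizable, so the natural cylinder basis for the product topology on $X$ is in general uncountable. Without a metric on $\A$ one cannot reduce by choosing ``balls of radius $1/n$'' around points of $D$; the refinement must instead rely on the cylinder structure of $\A^\Z$ together with the inherited separability of each $\A^n$. This is precisely where the separability hypothesis on $\A$ is used, and the preceding lemma shows that it is also necessary.
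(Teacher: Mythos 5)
Your overall route---Baire category plus a countable $\pi$-base, following Silverman---is the same one the paper gestures at, and the second half of your argument (that $\bigcap_k W_k$ is a non-empty dense $G_\delta$ because $X$ is compact Hausdorff and hence Baire, and that any of its points has dense orbit) is correct \emph{once a countable $\pi$-base is in hand}. The genuine gap is precisely at the step you yourself flag as ``the main obstacle'' and then do not carry out: separability of $\A$ does not yield a countable $\pi$-base, for $\A$, for $\A^n$, or for $X$. Concretely, take $\A = \{0,1\}^{\mathfrak{c}}$, which is compact Hausdorff and separable, yet every $\pi$-base of it is uncountable: if a non-empty open $V$ is contained in the subbasic cylinder $\pi_\alpha^{-1}(\{0\})$, then every non-empty basic open $B \subseteq V$ must constrain the coordinate $\alpha$, while $B$ constrains only finitely many coordinates; hence each member of a $\pi$-base can witness only finitely many of the $\mathfrak{c}$ sets $\pi_\alpha^{-1}(\{0\})$, so any $\pi$-base has at least $\mathfrak{c}$ members. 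The full shift over this $\A$ is a topologically transitive subshift over a separable alphabet for which your family $\mc V$ cannot exist, so the sentence ``one extracts a countable collection $\mc V$ of cylinders-in-$X$ that refines inside any given non-empty open subset of $X$'' is a restatement of what is missing rather than a proof of it. A second difficulty of the same kind: non-emptiness of a cylinder in $X$ is governed by $\mc{L}^n(X) \subseteq \A^n$, which is a \emph{closed subspace} of a separable space and therefore need not itself be separable, so even the countable dense subsets you invoke are not guaranteed where you need them.

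If $\A$ is metrizable (so that separability gives second countability of $\A^\Z$ and hence of $X$), your argument closes up and is the standard one; the same is true whenever $\A$ merely has a countable $\pi$-base (e.g.\ $\beta\N$, whose isolated points form one). In the general separable compact Hausdorff setting the statement requires a different mechanism: for instance, for the full shift over separable $\A$ one can build a dense orbit directly by concatenating, on disjoint blocks of coordinates, the elements of a countable dense subset of each $\A^n$---no Baire category and no $\pi$-base at all---but that construction exploits free concatenation, which an arbitrary transitive subshift does not permit. So you should either add a hypothesis under which the countable $\pi$-base genuinely exists, or replace this step with an argument that avoids $\pi$-bases entirely.
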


\begin{definition}
Let $w \in \A^\Z$ be a bi-infinite sequence over $\A$. We say that $w$ is \emph{repetitive} if for all $n \geq 1$ and every non-empty open set $U \subseteq \mc L^n(w)$, there exists $N = N(n,U) \geq 1$ such that every $v \in \mc L^N(w)$ contains a subword in $U$.
\end{definition}

The above definition may be interpreted as saying that any given finite word appears, up to some specified tolerance (determined by the open set $U$), with uniformly bounded gaps (within any word of size $N$). The lemma below (properties 2 and 3) says that this is equivalent, for any given tolerance, to \emph{all} $n$-letter words being found in all $N$-letter words, for some $N$ depending on the tolerance.

\begin{samepage}
\begin{prop} \label{prop: rep all words}
Let $w \in \A^\Z$ be a bi-infinite sequence over $\A$ and let us write $\mc{L}^n = \mc{L}^n(w)$. The following are equivalent:
\begin{enumerate}
	\item $w$ is repetitive;
	\item for all $n \geq 1$ and every finite collection $\mathcal{U} = \{U_i\}_{i=1}^\ell$ of non-empty open subsets $U_i \subseteq \mc{L}^n$, there exists an $N = N(n,\mathcal{U}) \geq 1$ such that for all $v \in \mc L^N$ and for every $1 \leq i \leq \ell$, there is a subword of $v$ in $U_i$;
	\item for all $n \geq 1$ and every open subset $U \subseteq \mc{L}^n \times \mc{L}^n$ containing the diagonal, there exists some $N = N(n,U) \geq 1$ so that, for every $u \in \mc{L}^n$ and $v \in \mc{L}^N$, there is a subword $u' \triangleleft v$ with $(u,u') \in U$.	
\end{enumerate}
\end{prop}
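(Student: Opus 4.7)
The plan is to establish the cyclic chain of implications $(1) \Rightarrow (2) \Rightarrow (3) \Rightarrow (1)$, with the final implication being the subtlest.

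For $(1) \Rightarrow (2)$: given a finite open cover $\{U_i\}_{i=1}^{\ell}$ of $\mc L^n$, each $U_i$ is non-empty and open, so (1) produces an $N_i$ such that every word in $\mc L^{N_i}$ contains a subword in $U_i$. Taking $N \coloneqq \max_i N_i$ suffices, since any $v \in \mc L^N$ contains subwords of length $N_i$ for each $i$, each of which in turn contains a subword in $U_i$. For $(2) \Rightarrow (3)$: given an open $U \subseteq \mc L^n \times \mc L^n$ containing the diagonal, for each $u \in \mc L^n$ choose a basic open rectangle of the form $V_u \times V_u \subseteq U$ with $u \in V_u$ (available by the product topology together with $(u,u) \in U$). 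Since $\mc L^n$ is compact, being a closed subset of $\A^n$, the open cover $\{V_u\}_{u \in \mc L^n}$ admits a finite subcover $\{V_{u_i}\}_{i=1}^{\ell}$, and (2) applied to this subcover yields an $N$. For any $u \in \mc L^n$, pick $i$ with $u \in V_{u_i}$ and let $u' \triangleleft v$ be the guaranteed subword lying in $V_{u_i}$; then $(u, u') \in V_{u_i} \times V_{u_i} \subseteq U$.

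The crux of the argument is $(3) \Rightarrow (1)$. Given a non-empty open $U \subseteq \mc L^n$, pick any $u_0 \in U$ and set
\[
V \coloneqq \bigl((\mc L^n \setminus \{u_0\}) \times \mc L^n\bigr) \cup \bigl(\mc L^n \times U\bigr).
\]
This set is open (using that $\{u_0\}$ is closed in the Hausdorff space $\mc L^n$) and contains the diagonal: for $u \neq u_0$, the pair $(u,u)$ lies in the first rectangle, while $(u_0, u_0)$ lies in the second. Moreover, by construction, the slice $\{u' \in \mc L^n \mid (u_0, u') \in V\}$ equals $U$ exactly, because $u_0 \notin \mc L^n \setminus \{u_0\}$ rules out the first rectangle. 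Applying (3) with this $V$ gives an $N$ such that for every $v \in \mc L^N$ there is a subword $u' \triangleleft v$ with $(u_0, u') \in V$, which forces $u' \in U$.

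The main obstacle is the construction of $V$ in $(3) \Rightarrow (1)$: one needs a diagonal-containing open set whose slice above a chosen point reduces to $U$, which is where the Hausdorff assumption on $\A$ is essentially used to ensure singletons are closed. The other two implications are routine and rely on compactness of $\mc L^n$ together with standard properties of the product topology.
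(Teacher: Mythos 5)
Your proof is correct, and its overall architecture (the cyclic chain, compactness of $\mc{L}^n$, and the construction of a diagonal-containing open set in the last step) matches the paper's. The implications $(1)\Rightarrow(2)$ and $(3)\Rightarrow(1)$ are essentially identical to the paper's: in particular, your set $V = \bigl((\mc{L}^n\setminus\{u_0\})\times\mc{L}^n\bigr)\cup\bigl(\mc{L}^n\times U\bigr)$ is exactly the paper's $\mc{L}^n\times\mc{L}^n\setminus\bigl(\{u_0\}\times(\mc{L}^n\setminus U)\bigr)$, merely written as a union of open rectangles rather than as the complement of a closed one; both descriptions use Hausdorffness in the same way, to make $\{u_0\}$ closed. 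The one place you genuinely diverge is $(2)\Rightarrow(3)$: the paper invokes the standard uniformity fact that one can shrink $U$ to a symmetric $V$ with $V\circ V\subseteq U$, covers $\mc{L}^n$ by the slices $V_u$, and then composes two memberships in $V$ to land in $U$. You instead choose, for each $u$, a basic open square $V_u\times V_u\subseteq U$ directly from the product topology and conclude $(u,u')\in V_{u_i}\times V_{u_i}\subseteq U$ in one step. This is a legitimate simplification: it trades the entourage-composition lemma for the elementary observation that an open neighbourhood of a diagonal point contains an open square, and it shortens the bookkeeping with no loss of generality. Both arguments rely on the same compactness of $\mc{L}^n$ to extract the finite subcover to which property $(2)$ is applied.
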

\end{samepage}

\begin{proof}
The implications ($1 \implies 2$) and ($3 \implies 1$) are trivial.

($2 \implies 3$):
Let $U \subseteq \mc{L}^n \times \mc{L}^n$ be open and contain the diagonal. It is well known (and not hard to prove) that, by compactness, we may find $V \subseteq \mc{L}^n \times \mc{L}^n$ satisfying the same, with $V = V^{-1}$ (where $V^{-1}$ consists of pairs $(x,y)$, for $(y,x) \in V$) and $V \circ V \subseteq U$ (where $(u,w) \in V \circ V$ if there is some $(u,v) \in V$ and $(v,w) \in V$).

For $u \in \mc{L}^n$ let $V_u \subseteq \mc{L}^n$ denote the set of $u' \in \mc{L}^n$ with $(u,u') \in V$. Then let $\{V_{u_i}\}_{i=1}^\ell$ be a finite subcover of $\{V_u\}_{u \in \A}$. Take $N \geq 1$ according to property 2. Let $u \in \mc{L}^n$ and $v \in \mc{L}^N$ be arbitrary and $i$ such that $u \in V_{u_i}$. Hence, $(u,u_i) \in V$. By property 2, we may find some $u' \in V_{u_i}$ with $u' \triangleleft v$. Then $(u_i,u') \in V$ so $(u,u_i) \circ (u_i,u') = (u,u') \in V \circ V \subseteq U$, as required.
\end{proof}

\begin{remark} \label{rem: rep metric}
An alternative characterisation to the above is via open subsets of $\A$ (rather than of $\mc{L}^n$), or open sets containing the diagonal in $\A \times \A$, by comparing words letter-by-letter to define the `tolerance'.

In this case, property 3 is easy to interpret when $\A$ has a metric $d$. Then we can take as $U$ the pairs of words $(u,u')$ so that $d(u_i,u_i') < \epsilon$ for each pair of letters $u_i \triangleleft u$ and $u_i' \triangleleft u'$. The property then says: for every $n \geq 1$ and $\epsilon > 0$ there is some $N = N(n,\epsilon) \geq 1$ so that, for every $u \in \mc{L}^n$ and $v \in \mc{L}^N$, there is some subword $u' \triangleleft v$ with $d(u,u') < \epsilon$. This definition in the metric setting has appeared previously, for instance in the work of Frettl\"{o}h and Richard \cite{Frett-Richard}.
\end{remark}

The following is well known for general invertible topological dynamical systems \cite[Rem.~2.12]{GH-topdyn} and so we state it without proof.

\begin{prop}\label{PROP:minimal-rep}
Let $X \subseteq \A^\Z$ be a subshift. The subshift $X$ is minimal if and only if every element $w \in X$ has a dense orbit.
\end{prop}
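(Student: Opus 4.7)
The plan is a standard two-direction argument, using the orbit closure $X_w$ from Proposition \ref{prop: language of orbit} as the bridge between dense orbits and minimal subsystems.

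For the forward implication, suppose $X$ is minimal and let $w \in X$ be arbitrary. By Proposition \ref{prop: language of orbit}, $X_w = \overline{\orb(w)}$ is a subshift, and clearly $X_w \subseteq X$ since $X$ is closed and shift-invariant and contains $w$. As $X_w$ is non-empty, minimality of $X$ forces $X_w = X$, so $\orb(w)$ is dense in $X$.

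For the reverse implication, suppose every element of $X$ has a dense orbit and let $Y \subseteq X$ be a non-empty subshift. Pick any $w \in Y$. Since $Y$ is closed and shift-invariant, $\orb(w) \subseteq Y$ and hence $X_w \subseteq Y \subseteq X$. By assumption $\orb(w)$ is dense in $X$, so $X_w = X$, forcing $Y = X$. Thus $X$ admits no proper non-empty subshift and is minimal.

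There is no real obstacle here; the only mild point worth highlighting is that the argument is entirely topological and uses nothing beyond Proposition \ref{prop: language of orbit} (which ensured $X_w$ is itself a subshift) together with the definitions of minimality and of a subshift. No separability or metrisability assumptions on $\A$ are needed, which is why the authors cite this as a general fact from topological dynamics and omit the proof.
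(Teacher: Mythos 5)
Your proof is correct and is exactly the standard argument the paper alludes to when it states the proposition without proof as a well-known fact from topological dynamics; both directions are handled properly via the orbit closure $X_w$, whose subshift status is indeed supplied by Proposition \ref{prop: language of orbit}. Nothing further is needed.
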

Likewise, the following lemmas are simple exercises.

\begin{lem}\label{LEM:dense-implies-rep}
Let \(X \subseteq \A^\Z\) be a subshift. If \(X\) is minimal, every \(w \in X\) has the same language and is repetitive, with the same value of \(N = N(n,U)\) for each \(n \geq 1\) and non-empty open set \(U \subseteq \mc{L}^n(w)\).
\end{lem}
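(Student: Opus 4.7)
The statement decomposes into two parts: every $w \in X$ has the same language, and a single $N = N(n,U)$ witnesses repetitivity for all $w$ simultaneously. The first part is immediate from the earlier machinery. By Proposition~\ref{PROP:minimal-rep}, minimality forces every $w \in X$ to have dense orbit, so $X_w = X$, and then Proposition~\ref{prop: language of orbit} gives $\mc{L}(w) = \mc{L}(X_w) = \mc{L}(X)$, independent of $w$. Writing $\mc{L}^n \coloneqq \mc{L}^n(X)$, once a uniform $N$ is produced relative to this common language, the same $N$ automatically serves every $w \in X$.

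For the uniform bound I would fix $n \geq 1$ and a non-empty open $U \subseteq \mc{L}^n$ and combine compactness with minimality. For each $N \geq 1$, set
\[
A_N \coloneqq \left\{ w \in X : w_{[i,i+n-1]} \in U \text{ for some } -N \leq i \leq N-n+1 \right\}.
\]
Writing $U = U' \cap \mc{L}^n$ for some open $U' \subseteq \A^n$ (which exists since $U$ is open in the subspace topology of $\mc{L}^n$), one sees that $A_N = X \cap \bigcup_{i=-N}^{N-n+1} p_{[i,i+n-1]}^{-1}(U')$ is open in $X$. The complement $Y \coloneqq X \setminus \bigcup_N A_N$ consists of those $w \in X$ that contain no subword in $U$, and is closed and $\sigma$-invariant. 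Crucially $Y \neq X$, since any $u \in U \subseteq \mc{L}^n(X)$ must, by definition of the language, occur as a subword of some element of $X$. Minimality then forces $Y = \varnothing$, so the nested family $\{A_N\}_N$ is an open cover of the compact space $X$, and hence some $A_{N_0} = X$.

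Taking $N \coloneqq 2N_0 + 1$ completes the argument: given $v \in \mc{L}^N$, the definition of the language together with shift-invariance of $X$ produces some $w \in X$ with $w_{[0,N-1]} = v$; then $\sigma^{N_0}(w) \in X = A_{N_0}$ yields an index $i \in [-N_0, N_0 - n + 1]$ with $w_{[i+N_0,\, i+N_0+n-1]} \in U$, and since $[i+N_0,\, i+N_0+n-1] \subseteq [0, N-1]$, this is a subword of $v$ lying in $U$, as required. The conceptual heart of the proof is the step $Y = \varnothing$, where minimality is used in an essential way to upgrade pointwise occurrence into a uniform one via compactness; the rest is open-cover bookkeeping and careful index juggling.
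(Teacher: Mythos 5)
Your proof is correct and follows essentially the same route as the paper's: cover $X$ by open sets of sequences exhibiting a $U$-word within a bounded window of the origin, extract a finite subcover by compactness, and read off a uniform $N$ from the window sizes. The only cosmetic difference is that the paper verifies that these sets cover $X$ via density of every orbit (each $x$ visits $p_{[1,n]}^{-1}(U)$ under some shift), whereas you verify it by observing that the complement is a proper closed shift-invariant subset and hence empty by minimality --- two equivalent invocations of the same hypothesis.
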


\begin{lem}\label{LEM:rep-implies-min}
If $w \in \A^\Z$ is repetitive, then $X_w$ is minimal.
\end{lem}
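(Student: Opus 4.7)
The plan is to use Proposition \ref{PROP:minimal-rep}: it suffices to show that every $x \in X_w$ has dense orbit in $X_w$, equivalently that $X_x = X_w$. Since $x \in X_w$ and $X_w$ is closed and shift-invariant, the inclusion $X_x \subseteq X_w$ is automatic, so the content is in the reverse inclusion. My strategy is to first upgrade repetitivity of $w$ to an equality of languages $\mc{L}(x) = \mc{L}(w)$ for every $x \in X_w$, and then deduce that basic open neighbourhoods of any $y \in X_w$ intersect $\orb(x)$.

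For the language step, the inclusion $\mc{L}(x) \subseteq \mc{L}(w)$ is immediate from $x \in X_w$ combined with Proposition \ref{prop: language of orbit}, since $\mc{L}^n(x) \subseteq \mc{L}^n(X_w) = \mc{L}^n(w)$. For the reverse inclusion, fix $u \in \mc{L}^n(w)$ and an arbitrary open $U \subseteq \A^n$ with $u \in U$. Then $U \cap \mc{L}^n(w)$ is a non-empty open subset of $\mc{L}^n(w)$, so by repetitivity there is some $N = N(n, U \cap \mc{L}^n(w))$ such that every $v \in \mc{L}^N(w)$ contains a subword belonging to $U \cap \mc{L}^n(w) \subseteq U$. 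But for any $j$, we have $x_{[j,j+N-1]} = p_{[j,j+N-1]}(x) \in \mc{L}^N(X_w) = \mc{L}^N(w)$, so $x_{[j,j+N-1]}$ contains an honest subword $u' \in U$. That subword $u'$ is genuinely a subword of $x$, so $u' \in \mc{W}^n(x) \cap U$. Since $U$ was an arbitrary open neighbourhood of $u$, we conclude $u \in \overline{\mc{W}^n(x)} = \mc{L}^n(x)$.

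To finish, let $y \in X_w$. A basic open neighbourhood of $y$ may be taken in the form $p_{[-m,m]}^{-1}(V)$ for some open $V \subseteq \A^{2m+1}$ with $y_{[-m,m]} \in V$. By shift-invariance $y_{[-m,m]} \in \mc{L}^{2m+1}(X_w) = \mc{L}^{2m+1}(w)$, and by the language equality just established, $y_{[-m,m]} \in \mc{L}^{2m+1}(x) = \overline{\mc{W}^{2m+1}(x)}$. Hence $V \cap \mc{W}^{2m+1}(x) \neq \varnothing$, yielding some $j$ with $x_{[j,j+2m]} \in V$; then $\sigma^{j+m}(x) \in p_{[-m,m]}^{-1}(V)$. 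Thus $y \in \overline{\orb(x)} = X_x$, so $X_w \subseteq X_x$ and $X_x = X_w$. Proposition \ref{PROP:minimal-rep} now gives minimality.

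The main subtlety, and the reason for working through open neighbourhoods rather than exact word equality, is that $\A$ need not be discrete: the definition of repetitivity only gives approximate occurrences controlled by open sets, and a priori $N$-words occurring in $x$ are merely legal (lying in $\mc{L}^N(w)$) rather than genuine subwords of $w$. The essential point that makes the argument go through is that the repetitivity condition is stated for \emph{all} $v \in \mc{L}^N(w)$, not just $v \in \mc{W}^N(w)$, so it applies directly to $N$-windows of $x$ and produces honest $n$-subwords of $x$ falling in the prescribed open set.
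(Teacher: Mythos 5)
Your proof is correct. It is worth noting, though, that you reach the conclusion by a somewhat different route from the paper. The paper works directly with the definition of minimality: it takes an arbitrary non-empty closed shift-invariant $Y \subseteq X_w$, applies repetitivity to the single open set $U_n = \mc{L}^n(X_w) \setminus \mc{L}^n(Y)$ to get an immediate contradiction unless $U_n = \varnothing$, and then invokes the fact that subshifts are determined by their languages to conclude $Y = X_w$. You instead route through Proposition \ref{PROP:minimal-rep} (minimality is equivalent to all orbits being dense), which forces you to work pointwise: you apply repetitivity to arbitrary neighbourhoods $U \cap \mc{L}^n(w)$ of a legal word and then run a closure argument to get $\mc{L}^n(x) = \mc{L}^n(w)$, followed by an explicit verification that basic open neighbourhoods of any $y \in X_w$ meet $\orb(x)$. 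The two approaches exploit the same underlying mechanism---repetitivity applied to a non-empty open subset of $\mc{L}^n(w)$---but the paper's choice of open set yields a one-line contradiction, while yours requires the extra net/closure bookkeeping; in exchange, your version makes explicit the step that the paper compresses into "subshifts are determined by their language", and your closing observation about why repetitivity must be quantified over all of $\mc{L}^N(w)$ rather than just the genuine subwords $\mc{W}^N(w)$ correctly identifies the point where a naive finite-alphabet argument would break down.
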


Putting together Proposition \ref{PROP:minimal-rep},  Lemma \ref{LEM:dense-implies-rep} and Lemma \ref{LEM:rep-implies-min} gives us the following.
\begin{coro}\label{CORO:minimal-equiv}
Let $w \in \A^\Z$. The following are equivalent:
\begin{enumerate}
\item $X_w$ is minimal;
\item every element $x \in X_w$ has a dense orbit;
\item $w$ is repetitive;
\item every element $x \in X_w$ is repetitive.
\end{enumerate}
\end{coro}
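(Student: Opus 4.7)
The plan is to establish the four-way equivalence by constructing a short cycle of implications, each of which is already available from the preceding lemmas and proposition in the section. The corollary is purely a packaging result, so I would not expect to need any new ideas; the main task is only to check that the logical connections close up correctly.

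First I would handle the equivalence $(1) \Leftrightarrow (2)$, which is immediate from Proposition \ref{PROP:minimal-rep} (minimality is equivalent to every element having a dense orbit). Next I would address $(1) \Rightarrow (4)$ using Lemma \ref{LEM:dense-implies-rep}: assuming $X_w$ is minimal, the lemma gives that every $x \in X_w$ is repetitive, with a common value of $N(n,U)$ shared across the subshift. The implication $(4) \Rightarrow (3)$ is trivial since $w$ itself lies in $X_w$. Finally, $(3) \Rightarrow (1)$ is exactly the content of Lemma \ref{LEM:rep-implies-min}: repetitivity of $w$ forces $X_w$ to be minimal.

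Chaining these gives $(1) \Leftrightarrow (2)$ and the cycle $(1) \Rightarrow (4) \Rightarrow (3) \Rightarrow (1)$, closing the equivalence of all four statements. The only subtlety worth flagging is that in the step $(1) \Rightarrow (4)$ one must apply Lemma \ref{LEM:dense-implies-rep} to the minimal subshift $X_w$ (rather than to $w$ alone), so that the conclusion applies uniformly to every $x \in X_w$; without this uniformity one could only recover $(3)$ directly. Since there is no new obstacle beyond invoking the prior results in the right order, the proof can be presented in just a few lines.
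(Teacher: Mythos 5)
Your proposal is correct and matches the paper exactly: the corollary is stated immediately after the sentence ``Putting together Proposition~\ref{PROP:minimal-rep}, Lemma~\ref{LEM:dense-implies-rep} and Lemma~\ref{LEM:rep-implies-min} gives us the following,'' which is precisely the cycle of implications you assemble. Your remark about applying Lemma~\ref{LEM:dense-implies-rep} to the whole minimal subshift to get (4) rather than just (3) is a correct and worthwhile clarification of what the paper leaves implicit.
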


\section{Substitutions}\label{SEC:substitutions}
Our main focus will be on subshifts coming from substitutions, to which we now turn. The theory of substitutions on finite alphabets is well developed \cite{BG:book,F:book,Queffelec}, and the reader is encouraged to familiarise themselves with that setting in order to compare and contrast our results with the classical theory. Where appropriate, we highlight these differences and subtleties.

\begin{definition}
Let $\sub \colon \A \to \A^+$ be a continuous function.
We call such a function a \emph{substitution} on $\A$.
We say $\sub$ is a substitution of \emph{constant length} $n$ if $\sub(\A) \subseteq \A^n$.
\end{definition}

Note that continuity of $\sub$ is automatically satisfied when $\A$ is finite (and thus given the discrete topology). To give an idea of the variety of substitutions that satisfy this general definition, consider the following examples on infinite alphabets:

\begin{example}\label{ex:non-CL}
 Take $\A = \N_\infty = \N_0 \cup \{\infty\}$, the one-point compactification of the natural numbers $\N_0$, with the substitution $\sub \colon \A \to \A^+$ given by
\[
\sub \colon \left\{
\begin{array}{rl}
0\       \mapsto & 0\ 0\ 0\ 1          \\
n\       \mapsto & 0\  n\!-\!1\  n\!+\!1    \\
\infty\  \mapsto & 0\ \infty\ \infty.
\end{array}\right. 
\]
\end{example}

\begin{example}\label{ex:CL-S1}
Take $\A = S^1 = \{z \in \C \mid |z| = 1\}$ as the unit circle in the complex numbers. For fixed $\alpha \in \A$ we have the substitution $\sub \colon \A \to \A^+$
\[
\sub \colon z \mapsto z\ \alpha z,
\]
where the image of $z$ contains itself and $\alpha z$, which is the product of $z$ with $\alpha$. 
\end{example}

\begin{example}
For an iterated function system $\{f_1, \ldots, f_d\}$ on a compact space $X$ with attractor $Y$, let $\A = Y$, with the substitution $\sub \colon \A \to \A^+$ given by
\[
\sub \colon a \mapsto f_{w_1(a)}(a)\ \cdots\ f_{w_k(a)}(a),
\]
where the word $w_1(a) \cdots w_k(a) \in \{1, \ldots, d\}^+$ is locally constant on $Y$.
\end{example}

Example \ref{ex:CL-S1} above is a constant length substitution because $|\sub(z)| = 2$ for all $z \in \A$.
In fact, we will see that we have no choice but to make the substitution constant length when the alphabet is $\A = S^1$.
The next result shows that topological properties of the alphabet, namely connectedness, can heavily restrict properties of substitutions on that alphabet.
We see also that, even though our alphabets are potentially infinite, continuity of the substitution and compactness of the alphabet ensure that the images of letters under substitution are uniformly bounded in length.
\begin{prop}\label{PROP:const-length}
Let $\sub \colon \A \to \A^+$ be a substitution.
There exists $k \geq 1$ such that for all $a \in \A$, $|\sub(a)| \leq k$.
If the alphabet $\A$ is connected, then $\sub$ is constant length.
\end{prop}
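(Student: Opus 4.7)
The plan is to exploit the earlier observation that the length function $|\cdot| \colon \A^\ast \to \N_0$ is continuous with respect to the discrete topology on $\N_0$. This was noted in the paragraph preceding the definition of subshift: every preimage $|\cdot|^{-1}(\{n\}) = \A^n$ is open (in fact clopen) in $\A^\ast$, which is exactly continuity of $|\cdot|$ into the discrete space $\N_0$. Composing with the continuous substitution $\sub$, the composite map $a \mapsto |\sub(a)|$ is then a continuous function from $\A$ into the discrete space $\N_0$.

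For the first assertion, I would simply observe that $\A$ is compact, and the continuous image of a compact space is compact; hence $\{|\sub(a)| \mid a \in \A\}$ is a compact subset of a discrete space, and so finite. Taking $k$ to be its maximum gives the desired uniform upper bound $|\sub(a)| \leq k$ for every $a \in \A$.

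For the second assertion, if in addition $\A$ is connected then the continuous image $\{|\sub(a)| \mid a \in \A\} \subseteq \N_0$ must be connected. The only connected subsets of a discrete space are singletons, so $|\sub(\cdot)|$ is constant. By definition this means $\sub(\A) \subseteq \A^n$ for some $n$, i.e., $\sub$ is of constant length.

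Both parts are direct consequences of preservation of compactness and connectedness under continuous maps, together with the observation that $\N_0$ carries the discrete topology in the relevant sense. There is no genuinely hard step; the only point that merits care is remembering that the continuity of $|\cdot|$ established earlier is continuity into the \emph{discrete} topology on $\N_0$ (otherwise, for instance, connectedness of the image would be no obstruction to $|\sub|$ being non-constant).
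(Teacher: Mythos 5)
Your proof is correct and follows essentially the same route as the paper: both arguments use continuity of $|\cdot| \circ \sub$ into the discrete space $\N_0$, then invoke preservation of compactness (to get boundedness) and of connectedness (to force the image to be a singleton). Your remark that the relevant topology on $\N_0$ is the discrete one is a useful clarification but does not change the argument.
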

\begin{proof}
The function $| \cdot | \circ \sub \colon \A \to \N$ is continuous as it is the composition of two continuous functions.
It follows that $|\sub(\A)|$ is compact by the compactness of $\A$ and so is bounded.
If $\A$ is connected, then so is $|\sub(\A)|$ and hence there exists an $n\geq 1$ such that for all $a \in \A$, $|\sub(a)| = n$.
\end{proof}

Similarly, a substitution is of course constant length on any connected subset of the alphabet. Given Proposition \ref{PROP:const-length}, we may define the \emph{length} of a substitution by $\displaystyle |\sub| \coloneqq \max_{a \in \A} |\sub(a)| \in \N$.

It is often the case that checking continuity of a substitution is easier if one can do it component-wise. Indeed, Durand, Ormes and Petite gave an alternative equivalent definition of a substitution \cite{DOP:self-induced} (there called a \emph{generalised substitution}). Recall that $p_i \colon \A^n \to \A$ is the canonical projection function onto the $i$th component of the product $\A^n$.

\begin{prop}\label{PROP:continuous}
A function $\sub \colon \A \to \A^+$ is continuous if and only if for every $n \geq 1$ the subspace $\sub^{-1}(\A^n) \subseteq \A$ is open, and for every $1 \leq i \leq n$, the composition $p_i \circ \sub \colon \sub^{-1}(\A^n) \to \A$ is continuous.
\end{prop}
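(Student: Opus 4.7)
The plan is a routine unpacking of the topologies involved: the disjoint union topology on $\A^+ = \bigsqcup_{n \geq 1} \A^n$ and the product topology on each $\A^n$. I would separate the two implications.

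For the forward direction, assume $\sub$ is continuous. Since $\A^n$ is (cl)open in $\A^+$ by the definition of the disjoint union topology, $\sub^{-1}(\A^n)$ is open in $\A$ by continuity of $\sub$. Then on $\sub^{-1}(\A^n)$ the map $\sub$ takes values in $\A^n$, and the restriction $\sub|_{\sub^{-1}(\A^n)} \colon \sub^{-1}(\A^n) \to \A^n$ is continuous. Composing with the projection $p_i \colon \A^n \to \A$, which is continuous in the product topology, yields continuity of $p_i \circ \sub$ on $\sub^{-1}(\A^n)$.

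For the reverse direction, I would show that preimages of open sets are open. Let $U \subseteq \A^+$ be open. By the definition of the disjoint union topology, $U = \bigsqcup_{n \geq 1} U_n$ with each $U_n \coloneqq U \cap \A^n$ open in $\A^n$. Hence $\sub^{-1}(U) = \bigcup_{n \geq 1} \sub^{-1}(U_n)$, so it suffices to show each $\sub^{-1}(U_n)$ is open in $\A$. Since $\sub^{-1}(U_n) \subseteq \sub^{-1}(\A^n)$ and the latter is open by hypothesis, it is enough to show that $\sub^{-1}(U_n)$ is open in the subspace $\sub^{-1}(\A^n)$. This reduces to showing that the restriction $\sub|_{\sub^{-1}(\A^n)} \colon \sub^{-1}(\A^n) \to \A^n$ is continuous into the product topology, for which the universal property of the product topology tells us that it suffices to check that each coordinate $p_i \circ \sub|_{\sub^{-1}(\A^n)}$ is continuous. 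But this is precisely the hypothesis.

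There is no serious obstacle here; the only thing to take care with is not to conflate openness in $\sub^{-1}(\A^n)$ with openness in $\A$, which is handled by the openness of $\sub^{-1}(\A^n)$ in $\A$. The proof is essentially a double application of the universal properties of the two topologies in play.
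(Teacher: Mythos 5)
Your proof is correct and follows essentially the same route as the paper's: openness of $\A^n$ in the disjoint union plus continuity of the projections for the forward direction, and the universal property of the product together with openness of the sets $\sub^{-1}(\A^n)$ for the converse. You merely spell out in more detail the reduction to continuity of the restrictions $\sub|_{\sub^{-1}(\A^n)}$, which the paper states more briefly.
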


\begin{proof}
Suppose that $\sub\colon \A \to \A^+$ is continuous.
As $\A^n$ is an open subset of $\A^+$, and $\sub$ is continuous, it follows that $\sub^{-1}(\A^n)$ is open in $\A$.
Then, as $p_i$ is continuous for every $1 \leq i \leq n$, and restricting continuous functions to subspaces preserves continuity, it follows that $p_i \circ \sub \colon \sub^{-1}(\A^n) \to \A$ is continuous.

For the other direction, it is enough to show that
\[
\sub|_{\sub^{-1}(\A^n)} \colon \sub^{-1}(\A^n) \to \A^+
\]
is continuous for every $n \geq 1$, as the (disjoint) sets $\sub^{-1}(\A^n)$ are open in $\A$. For any particular $n$, the universal property of products says that $\sub|_{\sub^{-1}(\A^n)}$ is continuous if and only if $p_i \circ \sub \colon \sub^{-1}(\A^n) \to \A$ is continuous for every $1 \leq i \leq n$ and so we are done.
\end{proof}
By Proposition \ref{PROP:const-length}, one is then only required to check the compositions $p_i \circ \sub $ for a finite number of the subalphabets $\sub^{-1}(\A^n)$ in order to confirm continuity of $\sub$.

A substitution naturally extends to a continuous function  $\sub \colon \A^\ast \to \A^\ast$. We define $\sub(\varepsilon) \coloneqq \varepsilon$ and for $u = u_1 \cdots u_n \in \A^n \subseteq \A^+$ we define
\[
\sub(u) \coloneqq \sub(u_1) \cdots \sub(u_n)
\]
by concatenation. Since \(\sub \colon \A^* \to \A^*\) has the same domain and codomain it may be iterated. By continuity of $\sub$, all iterates $\sub^k$ for \(k \in \N\) are also continuous and so are also substitutions. Words of the form $\sub^k(a)$ are called $k$-\emph{superwords}.

\begin{definition}
Let $\sub\colon \A \to \A^+$ be a substitution. We say a word $u \in \A^n$ is \emph{generated} by $\sub$ if there exist $a \in \A$, $k \geq 0$ such that $\sub^k(a)$ contains $u$ as a subword. We say that \(\sub\) \emph{generates arbitrarily long words} if, for each \(n \in \N\), there is a word of length \(n\) generated by \(\sub\). In this case, the \emph{language} of $\sub$ is
\[
\mc{L}(\sub) = \overline{\{u \in \A^\ast \mid u \text{ is generated by } \sub\}}
\]
and we call words in $\mc L(\sub)$ \emph{legal}. The legal words of length $n$ are denoted by $\mc{L}^n(\sub)$. The \emph{subshift associated with $\sub$} is $X_\sub \coloneqq X_{\mc{L}(\sub)}$.
\end{definition}

Note that the language includes closure points of words generated by $\sub$. If closure points were not included then the following lemma, stating that \(\mc{L}(\sub)\) is a language, would fail and thus \(X_\sub\) would also not be guaranteed to be a subshift. 

\begin{lem} \label{lem: closure of language under subst and subwords}
Let \(v \in \mc{L}(\sub)\). Then \(\sub(v) \in \mc{L}(\sub)\) and \(u \in \mc{L}(\sub)\) for any \(u \triangleleft v\). In particular, \(\mc{L}(\sub)\) is a language if \(\sub\) generates arbitrarily long words.
\end{lem}

\begin{proof}
Let $Y$ be the set of generated words, so that $\mc{L}(\sub) = \overline{Y}$. By continuity of $\sub$ we have that $\sub(\mathcal{L}(\sub)) = \sub\left(\overline{Y}\right) \subseteq \overline{\sub(Y)}$. If $u \in Y$ then $u \triangleleft \sub^n(a)$ for some $a \in \A$ and $n \in \N_0$ and thus $\sub(u) \triangleleft \sub^{n+1}(a)$ so that $\sub(Y) \subseteq Y$. Hence $\overline{\sub(Y)} \subseteq \overline{Y} = \mathcal{L}(\sub)$ and thus $\sub(\mathcal{L}(\sub)) \subseteq \mathcal{L}(\sub)$, as required.

The proof for closure under subwords is similar. Let $Y_n$ and $Y_m$ denote the sets of generated words of length $n > m$. Given some $0 \leq j \leq n-m$, let $p \colon \A^n \to \A^m$ be the continuous map that takes the $j$th subword of length $m$. Then $p(\mc{L}^n(\sub)) = p\left(\overline{Y_n}\right) \subseteq \overline{p(Y_n)}$ by continuity. Clearly a subword of a generated word is generated, so $\overline{p(Y_n)} \subseteq \overline{Y_m} = \mathcal{L}^m(\sub)$ and thus $p(\mc{L}^n(\sub)) \subseteq \mc{L}^m(\sub)$. Since $n > m$ and $j$ were arbitrary, we see that $\mc{L}(\sub)$ is closed under taking subwords.

By definition, \(\mc{L}(\sub)\) is closed, and by the above is closed under taking subwords. If \(\sub\) generates arbitrarily long words then each \(\mc{L}^n(\sub) \neq \varnothing\) and thus is a language.
\end{proof}

\begin{coro} \label{cor: sub of subshift}
Let $\sub \colon \A \to \A^+$ be a substitution. Then $\sub(X_\sub) \subseteq X_\sub$.
\end{coro}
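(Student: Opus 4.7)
The plan is to unfold the definitions and reduce directly to Lemma \ref{lem: closure of language under subst and subwords}. First, I would make explicit the natural extension of $\sub$ to bi-infinite sequences: for $w \in \A^\Z$, define $\sub(w) \in \A^\Z$ by concatenating the finite words $\sub(w_i)$, with $\sub(w_0)$ anchored to begin at position $0$. Since $X_\sub = X_{\mc{L}(\sub)}$, it suffices to check that every finite subword of $\sub(w)$ lies in $\mc{L}(\sub)$.

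Let $u$ be an arbitrary finite subword of $\sub(w)$. By Proposition \ref{PROP:const-length} the length $|\sub|$ is finite, so any window of $\sub(w)$ of length at most $N$ is contained inside $\sub(w_{[j,k]})$ for some sufficiently large interval $[j,k]$ (concretely, any $[j,k]$ whose image has length $\geq |u|$ and covers the positions of $u$ will do). Fix such $j \leq k$ and note $u \triangleleft \sub(w_{[j,k]})$.

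Now $w_{[j,k]}$ is a finite subword of $w \in X_\sub$, so $w_{[j,k]} \in \mc{L}(\sub)$ by definition of the subshift associated with a language. Applying Lemma \ref{lem: closure of language under subst and subwords} once gives $\sub(w_{[j,k]}) \in \mc{L}(\sub)$, and applying its closure-under-subwords clause to $u \triangleleft \sub(w_{[j,k]})$ yields $u \in \mc{L}(\sub)$. Since $u$ was an arbitrary finite subword, $\sub(w) \in X_{\mc{L}(\sub)} = X_\sub$.

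There is no real obstacle here; the substantive content is already in Lemma \ref{lem: closure of language under subst and subwords}. The only mild care required is the bookkeeping that locates $u$ inside $\sub(w_{[j,k]})$ for some finite $[j,k]$, which is immediate from the uniform length bound $|\sub| < \infty$.
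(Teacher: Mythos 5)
Your proof is correct and follows essentially the same route as the paper's: take an arbitrary finite subword $u \triangleleft \sub(w)$, locate it inside $\sub(v)$ for a finite subword $v \triangleleft w$ (your $v = w_{[j,k]}$), and apply Lemma \ref{lem: closure of language under subst and subwords} twice. The extra bookkeeping you supply (anchoring the extension of $\sub$ to $\A^\Z$ and using the uniform length bound to find $[j,k]$) is a harmless elaboration of what the paper leaves implicit.
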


\begin{proof}
Let $w \in X_\sub$, so every subword of $w$ is legal. Let $u \triangleleft \sub(w)$. Then $u \triangleleft \sub(v)$ for some $v \triangleleft w$, the latter implying $v \in \mc{L}(\sub)$. By Lemma \ref{lem: closure of language under subst and subwords} we have $\sub(v) \in \mc{L}(\sub)$. By the same lemma, since $u \triangleleft \sub(v)$, we have $u \in \mc{L}(\sub)$. Since $u \triangleleft \sub(w)$ was arbitrary, we see that $\sub(w) \in X_\sub$, as required.
\end{proof}

\begin{coro} \label{cor: sub is subshift}
Let $\sub \colon \A \to \A^+$ be a substitution that generates arbitrarily long words. The associated space $X_\sub \subseteq \A^\Z$ is a subshift (in particular, \(X_{\sub} \neq \varnothing\)). Conversely, if there exists some $k \geq 1$ so that $|\sub^n(a)| < k$ for all $n \in \N$ and \(a \in \A\), then $X_\sub = \varnothing$.
\end{coro}

\begin{proof}
If $|\sub^n(a)| < k$ for all $a \in \A$ and $n \in \N$ then $\mc{L}^k(\sub) = \varnothing$ and hence $X_\sub = \varnothing$. Otherwise, \(\mc{L}(\sub)\) is a language by Lemma \ref{lem: closure of language under subst and subwords} and $X_\sub$ is a subshift by Proposition \ref{prop: language -> subshift}.
\end{proof}

\begin{coro}\label{COR:non-empty}
Let $\sub \colon \A \to \A^+$ be a substitution with associated subshift $X_\sub$.
If there exists a letter $a \in \A$ such that $\lim_{n \to \infty} |\sub^n(a)| = \infty$, then $X_\sub \neq \varnothing$ is a subshift.
\end{coro}

One might expect the converse of the above to hold by some compactness argument; it of course holds for finite or connected alphabets $\A$. However, it is possible to engineer an example where this is not the case:

\begin{example} \label{exp: non-growing}
Let $\A = \N_{\infty} \times \N_{\infty}$, where $\N_{\infty}= \N_0 \cup \{\infty\}$ is the one-point compactification of $\N_0$. Define a substitution $\sub \colon \A \to \A^+$ by
\[
\sub(n,m) =
\begin{cases}
(0,0), & \text{if } n = 0, m = 0, \\
(n,m-1), & \text{if } m > 0,\\
(n-1,n)(0,n), & \text{if } n > 0 \text{ and } m = 0,
\\
\end{cases}
\]
where $\infty - 1 \coloneqq \infty$. By Proposition \ref{PROP:continuous}, $\sub$ is continuous. One quickly sees that
\[
\begin{array}{lcll}
{\displaystyle \lim_{i \to \infty}\sub^i(n,m) }& = & (0,0)^{n+1} & \text{ if } n,m \neq \infty;\\
{\displaystyle \lim_{i \to \infty}\sub^i(\infty,m) }& = & (\infty,\infty)(0,\infty) & \text{ if } m \neq \infty; \\
{\displaystyle \lim_{i \to \infty}\sub^i(n,\infty) }& = & (n, \infty). &
\end{array}
\]
For every $k \in \N$, the letter $(k,0)$ eventually grows to length $k+1$ under iterated substitution, meaning that $X_\sub$ is non-empty by Corollary \ref{cor: sub is subshift}. In fact,
\[
X_\sub = \{\cdots (0,0)(0,0)|(0,0)(0,0) \cdots \}.
\]
However, every letter is eventually constant under substitution, so no letter grows without bound. 
\end{example}

\begin{example} \label{exp: language not realised}
It may be that not all letters of $\A$ appear in $X_\sub = X(\mc{L}(\sub))$, even if \( \# \A < \infty\) and the alphabet cannot be reduced to give the same subshift. Indeed, consider the substitution
\[
\sub \colon
\begin{cases}
a \mapsto a, \\
b \mapsto ab.
\end{cases}
\]
It is easy to see that $\mc{L}(\sub) = \{a^n,a^nb \mid n \in \N_0\}$. The only bi-infinite element admitted by this language is the periodic word containing only $a$, and the substitution cannot be restricted to a smaller alphabet that gives the same.

We see that $\mc{L}(X_\sub) = \{a^n \mid n \in \N_0\}$ and (as also for the previous example) it is possible for $\mc{L}(X_\sub) \subseteq \mc{L}(\sub)$ to be a strict inclusion. However, for the sufficiently well behaved substitutions of main interest here (such as primitive substitutions, but also see Proposition \ref{prop: language realised}) the full language, and in particular the full alphabet, is realised by $X_\sub$.
\end{example}

\begin{remark} \label{rem: letter surjective}
For $C \subseteq \A$, let $s(C)$ be the set of letters in substitutes of elements of $C$ i.e.,
\[
s(C) \coloneqq \{a \triangleleft \sub(c) \mid c \in C, a \in \A\}.
\]
One may always restrict to a \emph{letter surjective} substitution, in the sense that $\A = s(\A)$. Indeed, consider the eventual range
\[
\mc{B} = \bigcap_{n \in \N} s^n(\A).
\]
This is a nested intersection of non-empty compact sets and thus $\mc{B} \neq \varnothing$. Since $s(\mc{B}) = \mc{B}$, the substitution is well defined and letter surjective on $\mc{B}$. All elements of the shift $X_\sub$ contain only letters in $\mc{B}$. Indeed, let $a \triangleleft w \in X_\sub$ for $a \in \A$ and let $Y_n$ be the set of words of length $n$ generated by $\sub$. Since $a \triangleleft v$ for $v \in \mc{L}(\sub)$ and $v$ arbitrarily long, we have that $a \triangleleft v \in \overline{Y_n}$ for $n$ arbitrarily large. Since the length of the substitution is bounded (Proposition \ref{PROP:const-length}) we have that all letters of $Y_n$ are in $s^k(\A)$, where $k$ can be made arbitrarily large by choosing $n$ sufficiently large. It follows that $a \in \overline{s^k(\A)} = s^k(\A)$ for $k$ arbitrarily large and hence $a \in \mc{B}$, so all elements of the shift contain only letters of $\mc{B}$. This does not mean, however, that restricting the substitution to $\mc{B}$ does not give a smaller subshift, as seen in the next example.
\end{remark}

\begin{example}
Consider the finite substitution
\[
\sub \colon
\begin{cases}
a \mapsto bc, \\
b \mapsto bb, \\
c \mapsto cc.
\end{cases}
\]
One may restrict the substitution to its eventual range $\mc{B} = \{b,c\}$, which generates the subshift of two periodic elements of all $b$s and all $c$s. However, $\sub^{n+1}(a) = b^{2^{n}}c^{2^{n}}$ and thus we have the non-periodic element $\cdots bbbccc \cdots \in X_\sub$ in the subshift of the original substitution.
\end{example}

The following result identifies those substitutions whose languages (and in particular full alphabets) are fully realised in the subshift. A result of this form is only of interest when \(\sub\) generates arbitrarily long words since, otherwise, (1) and (2) fail whilst the subshifts in the equalities of (3) and (4) are not defined in this case.

\begin{prop} \label{prop: language realised} Let $\varrho$ a substitution that generates arbitrarily long words. 
The following are equivalent:
\begin{enumerate}
	\item for every $n \in \N$ and non-empty open subset $U \subseteq \A$, there is some word generated by $\sub$ of length $2n+1$ whose central letter is in $U$;
	\item for all $a \in \A$ and $n \in \N$, there is a legal word of length $2n+1$ whose central letter is $a$;
	\item $\mc{L}^1(X_\sub) = \mc{L}^1(\sub) = \A$;
	\item $\mc{L}(X_\sub) = \mc{L}(\sub)$.
	\end{enumerate}
\end{prop}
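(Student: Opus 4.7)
My plan is to close the cycle $(1) \Leftrightarrow (2) \Leftrightarrow (3) \Leftrightarrow (4)$. A preliminary observation simplifies things: $\mc{L}^1(\sub) = \A$ holds automatically, since for any $a \in \A$ one has $a = \sub^0(a)$ and $a \triangleleft a$, so $a$ is generated as a word of length one. For $(1) \Leftrightarrow (2)$, write $G_n \subseteq \A^{2n+1}$ for the set of generated words of length $2n+1$. By definition, $\mc{L}^{2n+1}(\sub) = \overline{G_n}$, and continuity of the central projection $p_{n+1} \colon \A^{2n+1} \to \A$ together with compactness of $\A^{2n+1}$ yields $p_{n+1}(\mc{L}^{2n+1}(\sub)) = \overline{p_{n+1}(G_n)}$. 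Statement (1) asserts that $p_{n+1}(G_n)$ is dense in $\A$; statement (2) asserts that $p_{n+1}(\mc{L}^{2n+1}(\sub)) = \A$. These are the same.

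For $(2) \Leftrightarrow (3)$, the implication $(3) \Rightarrow (2)$ is immediate: given $a \in \mc{L}^1(X_\sub) = \A$, take $w \in X_\sub$ with $a \triangleleft w$ and shift so that $w_0 = a$, so that $w_{[-n,n]} \in \mc{L}^{2n+1}(X_\sub) \subseteq \mc{L}^{2n+1}(\sub)$ has central letter $a$. For $(2) \Rightarrow (3)$, I will use a Cantor-style compactness argument. Given $a \in \A$, set
\[
X_n \coloneqq \{\, w \in \A^\Z \mid w_0 = a \text{ and } w_{[-n,n]} \in \mc{L}^{2n+1}(\sub)\, \}.
\]
Hypothesis (2) together with arbitrary bi-infinite extension makes $X_n$ nonempty; $X_n$ is closed since $\{a\}$ is closed in the Hausdorff space $\A$, $\mc{L}^{2n+1}(\sub)$ is closed by definition, and the relevant projections are continuous; the family is nested by Lemma \ref{lem: closure of language under subst and subwords}. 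Compactness of $\A^\Z$ delivers some $w \in \bigcap_n X_n$. Since every finite subword of such $w$ sits inside some $w_{[-n,n]}$, hence is legal, we have $w \in X_\sub$ with $w_0 = a$, giving $a \in \mc{L}^1(X_\sub)$.

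Finally, for $(3) \Leftrightarrow (4)$, $(4) \Rightarrow (3)$ is immediate from $\mc{L}^1(\sub) = \A$. For $(3) \Rightarrow (4)$ I transport letters through the substitution. Let $u \in \mc{L}(\sub)$ with $|u| = m$. First assume $u$ is generated, so $u \triangleleft \sub^k(a)$ for some $a \in \A$ and $k \geq 0$. By (3), some $w \in X_\sub$ has $w_0 = a$ after shifting; Corollary \ref{cor: sub of subshift} then gives $\sub^k(w) \in X_\sub$, and $\sub^k(w)$ contains $\sub^k(a) \supseteq u$ as a subword, so $u \in \mc{L}(X_\sub)$. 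For a general $u \in \mc{L}^m(\sub)$, approximate $u$ by generated words of length $m$ and use that $\mc{L}^m(X_\sub) = p_{[1,m]}(X_\sub)$ is closed as the image of a compact space under a continuous map. I expect the main obstacle to be the Cantor-style argument in $(2) \Rightarrow (3)$, where one must carefully verify that the approximating bi-infinite sequences assemble into an element of $X_\sub$ using only the pointwise information supplied by (2) together with the subword-closure of Lemma \ref{lem: closure of language under subst and subwords}. The substitution-transport step in $(3) \Rightarrow (4)$ is conceptually cleaner but relies crucially on the invariance $\sub(X_\sub) \subseteq X_\sub$.
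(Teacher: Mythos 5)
Your proof is correct, and its logical skeleton (compactness to pass between generated and legal words, Cantor-style compactness to assemble a bi-infinite word, and substitution-invariance of $X_\sub$ to transport letters into the subshift) is the same as the paper's, which proves the cycle $1 \Rightarrow 2 \Rightarrow 3 \Rightarrow 4 \Rightarrow 1$. Two of your packagings differ in ways worth noting. First, you get $(1) \Leftrightarrow (2)$ in one stroke from the closed-map identity $p_{n+1}(\overline{G_n}) = \overline{p_{n+1}(G_n)}$, whereas the paper proves $1 \Rightarrow 2$ by extracting a convergent subnet of approximating generated words and recovers the content of $(2) \Rightarrow (1)$ only at the end of the cycle, via $4 \Rightarrow 1$; your version is tidier and makes the equivalence of (1) and (2) transparent. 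Second, for $(2) \Rightarrow (3)$ the paper builds the bi-infinite word by iterated \emph{subsequence} extraction, which strictly speaking requires sequential compactness and so is only fully rigorous after replacing subsequences by subnets (as the paper does elsewhere, e.g.\ in Proposition \ref{prop: supertiling existence}); your nested-closed-sets argument with $X_n = p_0^{-1}(\{a\}) \cap p_{[-n,n]}^{-1}(\mc{L}^{2n+1}(\sub))$ and Cantor's Intersection Theorem avoids this issue entirely and works verbatim for any compact Hausdorff $\A$. One cosmetic omission: in $(3) \Rightarrow (4)$ you only establish $\mc{L}(\sub) \subseteq \mc{L}(X_\sub)$; the reverse inclusion is immediate from the definition $X_\sub = X_{\mc{L}(\sub)}$, as the paper also notes, but you should say so.
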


\begin{proof}
($1 \implies 2$): Let $a \in \A$ be arbitrary. By assumption, for each open set $U \subseteq \A$ containing $a$ we may construct a word $v_U$ of length $2n+1$ that is generated by $\sub$ and has a letter of $U$ at its centre. By compactness of $\A^{2n+1}$ there is a subnet of $(v_U)$ that converges to some word $v$. By construction $v$ has $2n+1$ letters, contains $a$ at its centre and is the limit of a net of words generated by $\sub$. Thus $v$ is in the closure of the generated words, so is legal, as required.

($2 \implies 3$): By definition, $\mc{L}^1(\sub) = \A$ (since $a \triangleleft \sub^0(a) = a$). To see that every letter $a \in \A$ appears in $X_\sub$, by using the assumption we may find a sequence $(v_n)$ of legal words of length $2n+1$ whose central letters are all $a$. By compactness of $\A$ we may choose a subsequence $S_1$ of $(v_n)$ that converges at positions $-1$ and $+1$, say to $a_{-1}$ and $a_1$, respectively. Similarly, we may find a subsequence $S_2$ of $S_1$ that also converges at positions $a_{-2}$ and $a_2$. Continuing in this way, we construct an infinite word $w = \{a_i\}_{i \in \Z}$ and subsequences $S_n$ of legal words that converge to $w_{[-n,n]}$ when restricted to the word of length $2n+1$ about the origin. Since $\mc{L}(\sub)$ is closed we see that $w_{[-n,n]} \in \mc{L}(\sub)$. As these cover all of $w$, it follows that $w \in X_\sub$, since then any finite subword of $w$ is a subword of some $w_{[-n,n]}$, which is legal. As $a = w_0$ we have that $a \in \mc{L}^1(X_\sub)$, as required.

($3 \implies 4$): We show that $\sub^n(a) \in \mc{L}(X_\sub)$ for each $a \in \A$ and $n \in \N_0$. By assumption, for each $a \in \A$ there is some $w \in X_\sub$ so that $a \triangleleft w$. Then $\sub^n(a) \triangleleft \sub^n(w)$, and $\sub^n(w) \in X_\sub$ by Corollary \ref{cor: sub of subshift}. We see that $\mc{L}(X_\sub)$ contains all generated words. Since languages are always closed (Proposition \ref{prop: are languages}) we see that $\mc{L}(\sub) \subseteq \mc{L}(X_\sub)$. The reverse inclusion is trivial, by definition of $X_\sub$.

($4 \implies 1$): Let $U \subseteq \A$ be non-empty and open and $n \in \N$ arbitrary. Take any $a \in U$. Choose any $w \in X_\sub$ with $w_0 = a$, which we may do using shift-invariance and the fact that, in particular, our assumption implies that $\mc{L}^1(X_\sub) = \mc{L}^1(\sub) = \A$. Then $w_{[-n,n]}$ is legal, of length $2n+1$ and has $w_0 = a$. Since the legal words are in the closure of the generated words, it follows that we may find a generated word $v$ with $v_0 \in U$.
\end{proof}

\begin{remark} \label{rem: language realised => letter surjective}
If the equivalent conditions of the above result are satisfied then $\sub$ is letter surjective. Indeed, in this case any given $a \in \A$ is legal, so for arbitrary open $U \subseteq \A$ containing $a$ we can find some superword $\sub^k(b) = \sub(\sub^{k-1}(b))$ containing a letter of $U$. In particular, there is some $b_U \in \A$ with $\sub(b_U)_i \in U$. By compactness of $\A$, there is a convergent subnet of $(b_U)$, which by continuity has limit $b$ satisfying $a \triangleleft \sub(b)$. However, being letter surjective is not sufficient, as demonstrated by Example \ref{exp: language not realised}.
\end{remark}

\begin{remark}
In the case that $\A$ is infinite, it is not necessarily true that we may find any given $a \in \A$ in the interior of a word generated by $\sub$ in the above result. For example, consider $\A = \N_0 \cup \{\infty\}$ and
\[
\sub \colon
\left\{
\begin{array}{rl}
n\       \mapsto & 0 \ n\!+\!1 \\
\infty\  \mapsto & 0 \ \infty.
\end{array}
\right.
\]

It is not hard to see that every letter of $\A$ appears in some word of $X_\sub$. In particular, we have the fixed point $x= \sub^\infty (\infty\,|\,0) \in X_\sub$; that is, we can build the point $x$ as the limit of nested subwords $(u^{(n)})_{n\geq 0}$ centred at the origin by defining $u^{(0)} = \infty\,|\,0$ and $u^{(n)} = \sub^n(\infty)\,|\,\sub^n(0)$ for $n \geq 0$. However, $\infty$ appears only as the final term of superwords of the form $\sub^n(\infty)$.
\end{remark}

The above proposition gives necessary and sufficient conditions for the language of the subshift to agree with the legal words. The next result concerns this set of legal words, which, a priori, requires taking a closure of the generated words for each $\mc{L}^n$. Fortunately, as long as all letters grow without bound under substitution, then, once we know the 2-letter legal words, we can construct all other legal words by substituting these and the letters $\A$.

\begin{lem}\label{LEM:legal-words}
Let $\sub \colon \A \to \A^+$ be a substitution such that $|\sub^n(a)| \to \infty$ as $n \to \infty$ for every $a \in \A$. For a subset $U \subseteq \A^\ast$, let $S_n(U) \subseteq \A^n$ denote the subwords of length $n$ of words from $U$. Then
\begin{equation} \label{eq: legal words}
\mc{L}^n(\sub) = \bigcup_{j=0}^P S_n(\sub^j(\A \cup \mc{L}^2(\sub)))
\end{equation}
where $P$ may be taken as any number with $|\sub^P(a)| \geq n$ for all $a \in \A$.
\end{lem}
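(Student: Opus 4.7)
The plan is to prove the two inclusions separately.

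For RHS $\subseteq \mc{L}^n(\sub)$, one simply invokes Lemma \ref{lem: closure of language under subst and subwords}: every letter $a \in \A$ is legal (being $\sub^0(a)$), and every element of $\mc{L}^2(\sub)$ is legal by definition, so $\sub^j$ applied to such words remains legal, and any length-$n$ subword thereof is again legal.

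For the reverse inclusion, since $\mc{L}^n(\sub)$ is by definition the closure of the length-$n$ words generated by $\sub$, it suffices to show that (i) every generated word of length $n$ lies in the RHS, and (ii) the RHS is closed in $\A^n$. For (i), let $u$ be a generated word of length $n$, so $u \triangleleft \sub^k(a)$ for some $a \in \A$ and $k \geq 0$. If $k \leq P$, then $u \in S_n(\sub^k(\A))$, which is the $j=k$ term on the RHS. If $k > P$, I would decompose $\sub^k(a) = \sub^P(c_1)\sub^P(c_2) \cdots \sub^P(c_q)$ where $c_1 \cdots c_q \coloneqq \sub^{k-P}(a)$. The hypothesis on $P$ ensures each block $\sub^P(c_i)$ has length at least $n$, so the length-$n$ word $u$ either lies within a single block $\sub^P(c_i)$, or straddles exactly two consecutive blocks $\sub^P(c_i c_{i+1})$. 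In the former case $u \in S_n(\sub^P(\A))$; in the latter, $c_i c_{i+1}$ is a subword of the generated word $\sub^{k-P}(a)$ and therefore lies in $\mc{L}^2(\sub)$, so $u \in S_n(\sub^P(\mc{L}^2(\sub)))$. Both terms appear on the RHS with $j = P$.

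For (ii), I would show that $S_n(\sub^j(C))$ is closed for any compact $C \subseteq \A^\ast$, applied to the compact sets $C = \A$ and $C = \mc{L}^2(\sub)$. Continuity of $\sub^j$ makes $\sub^j(C)$ a compact subset of $\A^+ = \bigsqcup_\ell \A^\ell$. Since each $\A^\ell$ is clopen, compactness of $\sub^j(C)$ forces it to meet only finitely many $\A^\ell$, with each intersection $K_\ell \coloneqq \sub^j(C) \cap \A^\ell$ compact. For each such $\ell \geq n$ and each $0 \leq i \leq \ell - n$, the continuous subword map $p_{[i+1,i+n]} \colon \A^\ell \to \A^n$ sends $K_\ell$ to a compact, hence closed, subset of $\A^n$. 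Then $S_n(\sub^j(C))$ is the finite union of these images and therefore closed.

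The crux of the argument is the block decomposition in step (i): the choice of $P$ guarantees that a length-$n$ window in an arbitrarily deep supertile $\sub^k(a)$ crosses at most one boundary between the blocks $\sub^P(c_i)$, collapsing control of generated words of unbounded level $k$ down to the single fixed level $j = P$. The existence of a suitable $P$ in the first place is not a serious obstacle but worth noting: the growth hypothesis together with compactness gives it, since $\{a \in \A : |\sub^m(a)| \geq n\}$ is an increasing open cover of $\A$ as $m \to \infty$, so a single $m = P$ suffices.
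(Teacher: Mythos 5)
Your proof is correct and follows essentially the same route as the paper's: the same block decomposition $\sub^k(a) = \sub^P(c_1)\cdots\sub^P(c_q)$ showing that a length-$n$ window meets at most two blocks (hence reduces to $\sub^P$ of a letter or of a legal $2$-letter word), the same compactness argument for closedness of the finite union, and an equivalent compactness argument for the existence of $P$ (the paper uses Cantor's Intersection Theorem on the complements of your open cover). The only difference is organisational --- you prove the two inclusions directly, whereas the paper passes through the infinite union over all $j$ and then truncates at $j=P$ --- which changes nothing of substance.
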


\begin{proof}
Firstly, there exists some $P \in \N$ with $|\sub^P(a)| \geq n$ for all $a \in \A$. Indeed, consider
\[
\A_m = \{a \in \A \mid |\sub^m(a)| \leq n-1\} \subseteq \A.
\]
Then $\A_m = (|\,\cdot\,| \circ \sub^m)^{-1}\{1,\ldots,n-1\}$ and hence is compact. Since words grow under application of $\sub$, we have that $\A_m \supseteq \A_{m+1}$. If each $\A_m \neq \varnothing$, then by Cantor's intersection theorem there is some $x \in \A_m$ for each $m$. This contradicts the substitution growing on $x$. It follows that some $\A_P = \varnothing$ or, equivalently, $|\sub^P(a)| \geq n$ for each $a \in \A$.

We define
\[
Z =  \bigcup_{j=0}^\infty S_n(\sub^j(\A \cup \mc{L}^2(\sub))),
\]
which agrees with the union of the lemma except that we take an infinite union. Clearly $Z$ contains all length $n$ words generated by $\sub$. Moreover, since the substitution of a legal word is legal (Lemma \ref{lem: closure of language under subst and subwords}), we have that $Z \subseteq \mc{L}^n$. It follows that $\overline{Z} = \mc{L}^n$.

We claim that the above union defining \(Z\) does not change by truncating at $j=P$. Indeed, suppose that $v \in \mc{L}^n(\sub)$ with $v \triangleleft \sub^k(u)$ for $k > P$ and $u \in \A \cup \mc{L}^2(\sub)$. Then $v \triangleleft \sub^P(\sub^{k-P}(u))$. As any word of the form $\sub^P(a)$ has length at least $n$, it follows that there is some $1$ or $2$-letter subword $u' \triangleleft \sub^{k-P}(u)$ with $v \triangleleft \sub^P(u')$. Since substitutions and subwords of legal words are legal, we have that $u' \in \A \cup \mc{L}^2(\sub)$ and thus $v \in S_n(\sub^P(\A \cup \mc{L}^2(\sub)))$.

It follows that $Z$ is the finite union in the statement of the lemma. Since $\A \cup \mc{L}^2(\sub)$ is compact, the same follows for $Z$, which is thus closed. Hence $\mc{L}^n = \overline{Z} = Z$, as required.
\end{proof}

\begin{remark} \label{rem: two-letter legals}
Technically the $\A$ term of Equation \ref{eq: legal words} needs to be included in general, since it is possible that a letter does not appear in any $2$-letter legal word. However, in examples where the substitution grows without bound, if every legal letter also appears in the subshift (i.e., it satisfies the equivalent conditions of Proposition \ref{prop: language realised}) then it must appear in a legal $2$-letter word and thus we may simplify the statement to
\[
\mc{L}^n(\sub) = \bigcup_{j=0}^P S_n(\sub^j(\mc{L}^2(\sub))).
\]
In fact, it may be simplified further in this case. In Proposition \ref{prop: supertiling existence} we will see that every element of $X_\sub$ is the image under substitution of another, up to a shift. It easily follows that every legal $1$ or $2$-letter word is a subword of the substitute of another. Hence the above is a nested union and so we may write instead:
\[
\mc{L}^n(\sub) = S_n(\sub^P(\mc{L}^2(\sub))).
\]
\end{remark}

The following result shows that for every element of $X_\sub$, there is a corresponding `superword' decomposition (substitutive preimage up to an appropriate shift):

\begin{prop}\label{prop: supertiling existence}
Let $\sub \colon \A \to \A^+$ be a substitution with associated subshift $X_\sub$.
For every $w \in X_\sub$, there exists an element $x \in X_\sub$ and an integer $0 \leq i \leq |\sub(x_0)|-1$ such that $\sigma^i(\sub(x)) = w$.
\end{prop}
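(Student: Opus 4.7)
The plan is a compactness argument via nested non-empty closed sets. Let $K = \max_{a \in \A} |\sub(a)|$, which is finite by Proposition~\ref{PROP:const-length}, and for each $n \geq 0$ define
\[
S_n = \{(y, j) \in X_\sub \times \{0, \ldots, K-1\} : j < |\sub(y_0)| \text{ and } \sigma^j(\sub(y))_{[-n,n]} = w_{[-n,n]}\}.
\]
Each $S_n$ is closed in the compact space $X_\sub \times \{0, \ldots, K-1\}$: the length map $|\sub(\cdot)|$ has discrete image, so each $\{y : |\sub(y_0)| = k\}$ is clopen, and on each such stratum the subword condition is closed because $\sub \colon \A^\Z \to \A^\Z$ is continuous (combining discrete stabilisation of lengths with letter-wise continuity from Proposition~\ref{PROP:continuous}). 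Since $S_{n+1} \subseteq S_n$, once each $S_n$ is shown to be non-empty, the finite intersection property gives some $(x, i) \in \bigcap_n S_n$ satisfying $\sigma^i(\sub(x)) = w$ with $i < |\sub(x_0)|$, as required.

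To prove $S_n \neq \varnothing$, approximate $w_{[-n,n]}$ by generated words: from the definition of $\mc L(\sub)$ there is a net $v^{(\alpha)} \to w_{[-n,n]}$ in $\A^{2n+1}$ with $v^{(\alpha)} \triangleleft \sub^{m_\alpha}(b_\alpha)$. Setting $y^{(\alpha)} = \sub^{m_\alpha - 1}(b_\alpha)$ gives $v^{(\alpha)} \triangleleft \sub(y^{(\alpha)})$; locate the letter of $y^{(\alpha)}$ whose substitute contains $v^{(\alpha)}_0$, at offset $i_\alpha \in \{0, \ldots, K-1\}$, re-index $y^{(\alpha)}$ so this letter lies at position $0$, and extend by arbitrary padding to a bi-infinite $\tilde y^{(\alpha)} \in \A^\Z$. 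Compactness of $\A^\Z$ and finiteness of $\{0, \ldots, K-1\}$ yield a subnet along which $\tilde y^{(\alpha)} \to x_n$ and $i_\alpha$ is eventually equal to some constant $i_n$. Position-wise continuity of $\sub$ then forces $\sigma^{i_n}(\sub(x_n))_{[-n,n]} = w_{[-n,n]}$ and $i_n < |\sub((x_n)_0)|$.

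The delicate point is to ensure $x_n \in X_\sub$: we need the arbitrary padding outside $y^{(\alpha)}$ to be pushed off to $\pm\infty$ along the subnet, so that for each fixed window $[p, q] \subset \Z$ the word $(\tilde y^{(\alpha)})_{[p,q]}$ is eventually a subword of the legal generated $y^{(\alpha)}$; then $(x_n)_{[p,q]}$ is a limit of legal words and hence legal by closedness of $\mc L(\sub)$, placing $x_n$ in $X_\sub$. This requires both the left and right extents of the re-indexed $y^{(\alpha)}$ to grow without bound, which can fail if a minimal supertile representation places $v^{(\alpha)}_0$ near its boundary. The remedy is to pass to deeper supertile representations of $v^{(\alpha)}$: since all letters of $w$ lie in the eventual range $\mc B = \bigcap_n s^n(\A)$, on which $\sub$ is letter-surjective (Remark~\ref{rem: letter surjective}), one can iteratively enclose $v^{(\alpha)}_0$ within substitutes of predecessor letters and thereby arrange the re-indexed $y^{(\alpha)}$ to extend arbitrarily far on each side of position $0$.
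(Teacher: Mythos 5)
Your overall architecture (nested closed sets in a compact space, non-emptiness via nets of generated words and supertile decompositions) is a reasonable repackaging of the paper's compactness argument, and the closedness and nestedness of the $S_n$ are fine. The genuine gap is exactly at the point you flag as delicate: non-emptiness of $S_n$ as you have defined it, with $y$ required to lie in $X_\sub$. For fixed $n$, the patch $y^{(\alpha)} = \sub^{m_\alpha-1}(b_\alpha)$ is only guaranteed to cover a window of radius roughly $n/K$ about position $0$ (that is all that containing a $(2n+1)$-letter subword of its substitute forces), so outside that window $\tilde y^{(\alpha)}$ is arbitrary padding and the limit $x_n$ has no reason to have legal subwords there, hence no reason to lie in $X_\sub$. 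Your proposed remedy does not close this: letter-surjectivity of $\sub$ on the eventual range $\mc B$ applies to letters of $\mc B$, whereas the roots $b_\alpha$ of the approximating supertiles are arbitrary letters of $\A$ with no a priori relation to $w$ or to $\mc B$; and even where a predecessor $c$ with $b \triangleleft \sub(c)$ exists, the occurrence of $b$ may always sit at an end of $\sub(c)$ (the paper's own example $n \mapsto 0\,(n{+}1)$, $\infty \mapsto 0\,\infty$, where $\infty$ only ever occurs as the final letter of a supertile, shows this is a real phenomenon), so iterated enclosure can extend the representation on one side only and never makes the re-indexed patch grow on both sides of position $0$. Note also that legality of a finite window is not enough to extend to an element of $X_\sub$ in general (Example \ref{exp: language not realised}), so you cannot simply replace the padding by "some legal completion".

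The paper sidesteps this entirely by never asking the approximants to be bi-infinite legal sequences: it runs a single diagonal subnet argument over both $n$ and the approximation index, producing one limit word $w'$, and only at the end verifies that each window $w'_{[-n,n]}$ is a limit of restrictions of the patches $q(N,m)$ to $[-n,n]$ --- and those restrictions are genuine subwords of supertiles, hence generated, hence their limit is legal. Your scheme can be repaired in the same spirit: drop the condition $y \in X_\sub$ from $S_n$, let $y$ range over $\A^\Z$, and instead add the condition $y_{[-c_n,c_n]} \in \mc{L}(\sub)$ for a radius $c_n \approx n/K$ tending to infinity; your net argument does produce elements satisfying this weaker condition (the covered window of $y^{(\alpha)}$ is a subword of a supertile), the modified $S_n$ are still closed and nested, and the limit $x$ then has every finite subword legal, so $x \in X_\sub$. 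As written, however, the proof has a gap that the stated remedy does not fill.
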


\begin{proof}
We present an analogue of the usual proof from the finite alphabet setting. Let $w \in X_\sub$ and $v_n = w_{[-n,n]}$. Since $v_n$ is legal there is a net of generated words converging to \(v_n\). Thus, for each \(n\) we have a directed set \(\varLambda_n\), superwords $\sub^{k(n,m)}(t(n,m))$ for $k(n,m) \in \N$ and $t(n,m) \in \A$ and subwords $u(n,m) \triangleleft \sub^{k(n,m)}(t(n,m))$ so that $u(n,m) \to v_n$ as $m \to \infty$ in $\varLambda_n$.

We may consider the subwords $u(n,m)$ as centred over the origin, so that convergence $u(n,m) \to v_n$ as $m \to \infty$ holds letter-wise, at each position. Similarly, we may position the subwords $q(n,m) \coloneqq \sub^{k(n,m) - 1}(t(n,m))$ over the origin so that $u(n,m) \triangleleft \sigma^{i(n,m)}\sub(q(n,m))$, for $0 \leq i(n,m) < \ell$, where $\ell$ is the number of letters in the substitute of the origin letter of $q(n,m)$. Indeed, we may shift $q(n,m)$ so that $u(n,m)_0$ is in the image of the origin letter of $q(n,m)$ after substitution. We may pass to subnets of each $\varLambda_n$ so that $i(n,m) = i_n$ is constant in $m$, and then restrict to values of $n \in \N_0$ so that all remaining $i_n = i$ are constant.

We thus construct subwords $q(n,m)$ so that letters at positions $[-n,n]$ of $\sigma^i \sub(q(n,m))$ converge to those of $w$. Using compactness, choose a subnet $S_0$ of $q(n,m)$ that converges at position $0$, say to $x_0$. By continuity we have that $0 \leq i \leq |\sub(x_0)|-1$. Similarly, we may choose a subnet $S_1$ of $S_0$ that also converges at positions $-1$ and $1$, say to letters $x_{-1}$ and $x_1$, respectively. Inductively define subnets in this way so that $S_n$ converges at all positions $j \in [-n,n]$ to letters  $x_j$.

By construction (and continuity), the bi-infinite element $w' = \{x_j\}_{j \in \Z}$ satisfies $\sigma^i(\sub(w')) = w$. We now need to show that every subword of $w'$ is legal. Since, by Lemma \ref{lem: closure of language under subst and subwords}, subwords of legal words are legal, it suffices to show that $w'_{[-n,n]}$ is legal for each $n$. By construction we have the net $S_n$, which converges letter-wise on positions $[-n,n]$ to $w_{[-n,n]}$. This net consists of subwords of superwords $\sub^{k(n, m) - 1}(t(n,m))$, so it follows that $w_{[-n,n]}$ is in the closure of the generated words and thus is legal, as required.
\end{proof}

\subsection{Primitivity}
The following definition is adapted from the work of Durand, Ormes and Petite \cite{DOP:self-induced}, modified so that the condition only needs to be checked for a single power $p$. This definition is essentially the same as the one given by Frank and Sadun \cite{PFS:fusion-ILC}, just in a slightly different setting.

\begin{definition}\label{DEF:prim}
Let $\sub \colon \A \to \A^+$ be a substitution.
We say $\sub$ is \emph{primitive} if, for every non-empty open set $U \subseteq \A$, there exists a $p = p(U) \geq 0$ such that, for all $a \in \A$, some letter of $\sub^p(a)$ is in $U$.
\end{definition}

This is easily seen to be equivalent to the definition in \cite{DOP:self-induced}:

\begin{lem}\label{LEM:prim-higher-powers}
The substitution $\sub$ is primitive if and only if, for every non-empty open set $U \subseteq \A$, there exists a $p = p(U) \geq 0$ such that, for all $a \in \A$ and for all $j \geq p$, some letter of $\sub^j(a)$ is in $U$.
\end{lem}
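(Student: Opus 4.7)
The backward direction is immediate: if the stronger conclusion holds, simply setting $j = p$ recovers the definition of primitivity.

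For the forward direction, the plan is to start with the $p$ given by primitivity and argue that \emph{all} higher powers automatically inherit the property via the semigroup relation $\sub^{p+m} = \sub^{p} \circ \sub^m$. Concretely, given a non-empty open set $U \subseteq \A$, apply Definition~\ref{DEF:prim} to obtain $p = p(U)$ such that for every $a \in \A$, the word $\sub^p(a)$ contains some letter lying in $U$. Now fix an arbitrary $j \geq p$ and write $j = p + m$ with $m \geq 0$. Since $\sub$ maps into $\A^+$, we may write $\sub^m(a) = c_1 c_2 \cdots c_\ell$ for some $\ell \geq 1$ and letters $c_i \in \A$. By the definition of the extension of $\sub^p$ to $\A^+$ by concatenation, we have
\[
\sub^j(a) \;=\; \sub^p(\sub^m(a)) \;=\; \sub^p(c_1)\, \sub^p(c_2) \cdots \sub^p(c_\ell).
\]
Applying primitivity to each letter $c_i$ separately, each factor $\sub^p(c_i)$ contains a letter of $U$, and hence so does $\sub^j(a)$.

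There is no real obstacle here: the argument is a one-line consequence of functoriality of iteration together with the concatenative extension of $\sub$ to words. The only minor point to verify is that the intermediate word $\sub^m(a)$ is non-empty, which is automatic from $\sub(\A) \subseteq \A^+$. Since $U$ and $a$ were arbitrary, the chosen $p = p(U)$ witnesses the stronger statement uniformly in $j \geq p$ and in $a \in \A$.
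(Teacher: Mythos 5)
Your proof is correct and follows essentially the same route as the paper: both decompose $\sub^j = \sub^p \circ \sub^{j-p}$ and apply the primitivity witness $p$ to letters of $\sub^{j-p}(a)$ (the paper uses only the first letter of $\sub^{j-p}(a)$, whereas you apply it to every letter, which is an immaterial difference).
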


\begin{proof}
Suppose $\sub$ is primitive and let $U \subseteq \A$ be a given open set. Let $p$ be such that for all $a \in \A$, some letter of $\sub^p(a)$ is in $U$ and let $j \geq p$. Let $a$ be a letter in $\A$ and let $b$ be the first letter of $\sub^{j-p}(a)$. Then $\sub^p(b) \triangleleft \sub^p(\sub^{j-p}(a)) = \sub^j(a)$. By primitivity, $\sub^p(b) \triangleleft \sub^j(a)$ contains a letter in $U$. The other direction is trivial.
\end{proof}

The following is an analogue of Proposition \ref{prop: rep all words} on repetitivity, in the sense that it gives an equivalent primitivity condition described by finding \emph{all} letters of $\A$ within \emph{all} $p$-superwords, up to some given tolerance:

\begin{prop} \label{prop: prim all letters}
The following are equivalent:
\begin{enumerate}
	\item $\sub$ is primitive;
	\item for every finite collection $\mc{U} = \{U_i\}_{i=1}^\ell$ of open subsets $U_i \subseteq \A$, there exists a $p = p(\mc{U}) \geq 0$ such that for all $a \in \A$ and $1 \leq i \leq \ell$, some letter of $\sub^p(a)$ is in $U_i$;
	\item for every open $U \subseteq \A \times \A$ containing the diagonal, there is some $p \geq 0$ so that, for all $a$, $b \in \A$, we have that $b' \triangleleft \sub^p(a)$ for some $b' \in \A$ with $(b,b') \in U$.
\end{enumerate}
\end{prop}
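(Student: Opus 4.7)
The plan is to mirror closely the structure of the proof of Proposition~\ref{prop: rep all words}, reading ``$\mc{L}^n$'' there as ``$\A$'' here and ``appearing as a subword of a length-$N$ word'' as ``appearing in a level-$p$ supertile $\sub^p(a)$''.

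For $(1 \Rightarrow 2)$, given a finite open cover $\{U_i\}_{i=1}^\ell$ of $\A$, I would apply Lemma~\ref{LEM:prim-higher-powers} to each $U_i$ to obtain $p_i \geq 0$ with the property that every $\sub^j(a)$, $j \geq p_i$, meets $U_i$. Taking $p = \max_i p_i$ then handles all of the $U_i$ simultaneously.

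For $(2 \Rightarrow 3)$, given an open set $U \subseteq \A \times \A$ containing the diagonal, compactness of $\A$ (a standard uniformity argument) yields a symmetric open $V \subseteq \A \times \A$ containing the diagonal with $V \circ V \subseteq U$. Writing $V_c \coloneqq \{d \in \A \mid (c,d) \in V\}$, the sets $\{V_c\}_{c \in \A}$ cover $\A$, and by compactness I can extract a finite subcover $\{V_{c_i}\}_{i=1}^\ell$. Applying property~2 to this cover produces $p \geq 0$ such that for every $a \in \A$ and every $i$, some letter of $\sub^p(a)$ lies in $V_{c_i}$. Now for arbitrary $a,b \in \A$, pick $i$ with $b \in V_{c_i}$, i.e.\ $(c_i,b) \in V$; by the choice of $p$ there is $b' \triangleleft \sub^p(a)$ with $b' \in V_{c_i}$, i.e.\ $(c_i,b') \in V$. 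Using symmetry of $V$, concatenating yields $(b,b') \in V \circ V \subseteq U$, as required.

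For $(3 \Rightarrow 1)$, given a non-empty open $U \subseteq \A$, fix any $b \in U$ and set $V \coloneqq (\A \times \A) \setminus (\{b\} \times (\A \setminus U))$. Since $\{b\}$ and $\A \setminus U$ are closed, $V$ is open, and it contains the diagonal (if $(a,a) \in \{b\} \times (\A\setminus U)$ then $a=b \notin U$, contradicting $b \in U$). Applying property~3 to $V$ yields $p$ such that for every $a \in \A$ some $b' \triangleleft \sub^p(a)$ satisfies $(b,b') \in V$; by construction of $V$, this forces $b' \in U$, giving primitivity.

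The one non-routine step is the construction of the symmetric $V$ with $V \circ V \subseteq U$ in $(2 \Rightarrow 3)$; this is the standard lemma that a compact Hausdorff space carries a unique compatible uniformity, and was already invoked in the proof of Proposition~\ref{prop: rep all words}, so I would simply cite it rather than reprove it.
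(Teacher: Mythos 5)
Your proposal is correct and follows essentially the same route as the paper's own proof: the same use of Lemma~\ref{LEM:prim-higher-powers} and a maximum of the $p_i$ for $(1\Rightarrow 2)$, the same symmetric-entourage argument with a finite subcover of $\{V_c\}$ for $(2\Rightarrow 3)$, and the same complement-of-a-closed-box construction for $(3\Rightarrow 1)$. The only differences are cosmetic (your $(3\Rightarrow 1)$ even corrects a small typo in the paper, which writes $\A \setminus (\{a\}\times(\A\setminus U))$ where $(\A\times\A)\setminus(\{a\}\times(\A\setminus U))$ is meant), so no further changes are needed.
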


\begin{proof}
(1 $\implies$ 2): 
This follows trivially from Lemma \ref{LEM:prim-higher-powers}.

(2 $\implies$ 3):
Let $U \subseteq \A \times \A$ be open and contain the diagonal and take $V \subseteq \A \times \A$ with $V = V^{-1}$ and $V \circ V \subseteq U$. Similar to before we take the open cover $\{V_a\}_{a \in \A}$ of $\A$, where $a' \in V_a$ if $(a,a') \in V$. By compactness we may find a finite subcover $\{V_{a_i}\}_{i=1}^\ell$. Take $p \geq 0$ according to property 2 and let $a$, $b \in \A$. We have that $a \in V_{a_i}$ for some $i$, so $(a_i,a) \in V$ and thus also $(a,a_i) \in V$. By property 2, we have that $a' \triangleleft \sub^p(b)$ for some $a' \in V_{a_i}$ so that $(a_i,a') \in V$. We have that $(a,a') = (a,a_i) \circ (a_i,a') \in V \circ V \subseteq U$, as required.

(3 $\implies$ 1):
Let $U \subseteq \A$ be an arbitrary, non-empty open subset and let $V = (\A \times \A) \setminus (\{a\} \times (\A \setminus U))$, where $a \in U$ is arbitrary. Then $V \subseteq \A \times \A$ is open and contains the diagonal. Take $p \geq 0$ according to property 3 and let $b \in \A$ be arbitrary. Then there is some $a' \triangleleft \sub^p(b)$ with $(a,a') \in V$. The latter implies that $a' \in U$, as required.
\end{proof}

\begin{remark} \label{rem: prim metric}
Similar to Remark \ref{rem: rep metric}, Property 3 above has an intuitive interpretation when $\A$ is a metric space: for all $\epsilon > 0$ there is some $p = p(\epsilon) \geq 0$ so that, for all $a \in \A$, every letter $b \in \A$ is $\epsilon$-close to a letter of $\sub^p(a)$.
\end{remark}

In the following result (and elsewhere), to avoid trivialities, we assume that $\sub$ is not the trivial substitution $\sub(a) = a$ on a one-letter alphabet, which technically would be primitive according to Definition \ref{DEF:prim}.

\begin{prop}\label{PROP:prim-growing}
Let $\sub \colon \A \to \A^+$ be a primitive substitution. Then for all $n \geq 1$, there exists a $p \geq 0$ such that for all $a \in \A$, $|\sub^p(a)| \geq n$.
Consequently, for all $a \in \A$,  one has ${\displaystyle \lim_{i \to \infty}|\sub^i(a)| = \infty}$.
\end{prop}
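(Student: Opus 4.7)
The plan is to reduce to a single base step: produce some $p_0 \geq 1$ with $|\sub^{p_0}(a)| \geq 2$ for every $a \in \A$. Once this is in hand, a straightforward induction yields $|\sub^{k p_0}(a)| \geq 2^k$ for every $k \geq 0$, since writing $\sub^{p_0}(a) = c_1 \cdots c_m$ with $m \geq 2$ and applying $\sub^{p_0}$ again gives a concatenation of $m$ words each of length at least $2$. Given $n$, choose $k$ so that $2^k \geq n$ and take $p = k p_0$. The final claim $|\sub^i(a)| \to \infty$ then follows because $i \mapsto |\sub^i(a)|$ is non-decreasing (we have $|\sub(u)| \geq |u|$ for any word $u$, by summing $|\sub(c)| \geq 1$ over the letters $c$ of $u$).

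To produce $p_0$, I would first locate a single letter $b^{*} \in \A$ with $|\sub(b^{*})| \geq 2$. If $\A = \{a\}$ is a single point, the standing non-triviality assumption forces $\sub(a) = a^k$ for some $k \geq 2$, and we take $b^{*} = a$. Otherwise, pick distinct $a_1, a_2 \in \A$ and, using the Hausdorff property, choose disjoint open neighbourhoods $U_1$ of $a_1$ and $U_2$ of $a_2$. If no such $b^{*}$ existed, then $|\sub(a)| = 1$ for every $a \in \A$, so $\sub$ would restrict to a continuous self-map $\A \to \A$, and Lemma \ref{LEM:prim-higher-powers} applied to $U_1$ and $U_2$ would furnish $p_1, p_2 \geq 0$ with $\sub^j(a) \in U_1 \cap U_2 = \varnothing$ for every $a \in \A$ and every $j \geq \max(p_1, p_2)$, a contradiction.

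With $b^{*}$ in hand, the set $V \coloneqq \{a \in \A \mid |\sub(a)| \geq 2\}$ is open, by continuity of $|\,\cdot\,| \circ \sub$, and non-empty, since $b^{*} \in V$. Applying primitivity to $V$ produces $p \geq 0$ such that, for every $a \in \A$, some letter $c \triangleleft \sub^p(a)$ lies in $V$; since then $\sub(c) \triangleleft \sub^{p+1}(a)$, we obtain $|\sub^{p+1}(a)| \geq |\sub(c)| \geq 2$, and $p_0 = p+1$ suffices. The only non-routine step is the production of the witness letter $b^{*}$, where the non-triviality convention is essential and the case $|\A| \geq 2$ requires a separate Hausdorff-based argument via Lemma~\ref{LEM:prim-higher-powers}; everything else is a single application of primitivity followed by bootstrapping.
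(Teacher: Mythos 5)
Your proof is correct, but it takes a genuinely different route from the paper's. The paper argues in one shot: for $\A$ infinite it uses the Hausdorff property to produce a finite open cover containing $n$ mutually disjoint non-empty open sets, then invokes Proposition~\ref{prop: prim all letters}(2) to find a single $p$ with $\sub^p(a)$ meeting every one of them, giving $|\sub^p(a)| \geq n$ directly (the finite-alphabet case is handled separately via the cover by singletons and the non-triviality convention). You instead bootstrap: you locate one letter $b^{*}$ with $|\sub(b^{*})| \geq 2$ (using non-triviality for $\#\A = 1$, and for $\#\A \geq 2$ only \emph{two} disjoint open sets plus Lemma~\ref{LEM:prim-higher-powers} to rule out $\sub$ being a self-map of $\A$), apply primitivity once to the open set $V = \{a \mid |\sub(a)| \geq 2\}$ to get a uniform $p_0$ with $|\sub^{p_0}(a)| \geq 2$, and then iterate to obtain $|\sub^{kp_0}(a)| \geq 2^k$. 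Your argument is somewhat more elementary — it uses only Definition~\ref{DEF:prim} and Lemma~\ref{LEM:prim-higher-powers}, bypassing Proposition~\ref{prop: prim all letters} and the construction of arbitrarily large disjoint families of open sets — and as a bonus it makes the exponential (rather than merely unbounded) growth of supertile lengths explicit, consistent with $r > 1$ in Lemma~\ref{lem: spectral bounds}. The paper's approach, by contrast, directly exhibits the supertiles as eventually $\varepsilon$-dense in the alphabet, which is closer in spirit to how primitivity is used elsewhere in the text. All steps of your argument check out, including the openness of $V$ (continuity of $|\cdot| \circ \sub$ into the discrete space $\N$) and the monotonicity $|\sub^{i+1}(a)| \geq |\sub^i(a)|$ used for the final limit claim.
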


\begin{proof}
If \(\A\) is a singleton then, since \(\sub\) is not the trivial substitution of length \(1\), \(\sub(a)\) is given by \(n\) copies of \(a\) for the unique \(a \in \A\), so that \(|\sub^k(a)| = 2^k\). Otherwise, \(\A\) contains at least two distinct points and thus, since \(\A\) is Hausdorff, two non-empty disjoint open subsets \(U_1\), \(U_2 \subset \A\). By Proposition \ref{prop: prim all letters}, there exists \(p \in \N\) so that \(\sub^p(a)\) contains a letter in both \(U_1\) and \(U_2\) for all \(a \in \A\), in particular \(|\sub^p(a)| \geq 2\) and hence \(|\sub^{kp}(a)| \geq 2^k\).
\end{proof}

As a consequence of Corollary \ref{COR:non-empty}, $X_\sub$ must then also be non-empty.

\begin{coro}\label{COR:irred-non-empty}
If $\sub$ is primitive, then $X_\sub \neq \varnothing$ is a subshift.
\end{coro}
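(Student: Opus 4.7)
The corollary is an immediate consequence of the two preceding results, so my plan is simply to chain them together. First, Proposition \ref{PROP:prim-growing} gives that under primitivity (and the standing assumption that $\sub$ is not the trivial identity on a one-letter alphabet) we have $|\sub^i(a)| \to \infty$ as $i \to \infty$ for every letter $a \in \A$. In particular, there exists at least one $a \in \A$ along which the substitution grows without bound.

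Next, I would invoke Corollary \ref{COR:non-empty}, which says exactly that the existence of such a letter guarantees $X_\sub \neq \varnothing$. Alternatively, one could appeal directly to Proposition \ref{prop: sub is subshift}: the growth condition ensures that $\mc{L}^n(\sub) \neq \varnothing$ for every $n \in \N$, so the hypothesis that $\sub$ generates arbitrarily large words is met, and hence $X_\sub$ is a subshift in the sense of Definition of subshift (non-empty, closed, shift-invariant).

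There is no real obstacle here; the entire content of the corollary is contained in the two cited results, and the proof amounts to observing that primitivity \emph{plus the non-triviality convention} supplies the hypothesis needed to apply them. The only subtlety worth mentioning explicitly is that the non-triviality assumption (namely, that $\sub$ is not the identity on a singleton alphabet) is required to rule out the degenerate case in which every $|\sub^i(a)| = 1$; this is already accounted for in the statement of Proposition \ref{PROP:prim-growing}.
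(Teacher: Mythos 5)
Your proposal is correct and follows exactly the paper's route: the corollary is stated as an immediate consequence of Proposition \ref{PROP:prim-growing} (primitivity forces $|\sub^i(a)| \to \infty$) combined with Corollary \ref{COR:non-empty} (equivalently, Proposition \ref{prop: sub is subshift}). Your remark about the non-triviality convention is a fair and accurate observation, but otherwise there is nothing to add.
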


It is clear that Property 1 of Proposition \ref{prop: language realised} holds for primitive substitutions. Indeed, take an arbitrary non-empty open subset $U \subseteq \A$ and $a \in \A$. For some $n \geq 1$ we have \(|\sub^n| \geq 3\), so \(\sub^n(a) = xvy\) for \(x\), \(y \in \A\) and \(v \in \A^+\). Consider $j \geq p$ and $\sub^{n+j}(a) = \sub^j(x) \sub^j(v) \sub^j(y)$. By primitivity, $\sub^j(v)$ contains a letter $u \in U$. By taking $j$ sufficiently large, we may ensure that $|\sub^j(x)|$, $|\sub^j(y)| \geq n$, so that there is a word $v' \triangleleft \sub^j(a)$ of length $2n+1$ centred around $u$ and is generated by $\sub$, as required. Since all letters grow without bound, Remark \ref{rem: two-letter legals} has the following consequence.

\begin{coro} \label{cor: primitive 2 letters generate}
Let $\sub$ be a primitive substitution. Then for all $n \in \N_0$,
\[
\mc{L}^n(\sub) = \mc{L}^n(X_\sub) = S_n(\sub^P(\mc{L}^2(\sub))),
\]
where $S_n(U)$ denotes the $n$-letter subwords of words from $U \subseteq \A^\ast$ and $P$ is any number satisfying $|\sub^P(a)| \geq n$ for all $a \in \A$.
\end{coro}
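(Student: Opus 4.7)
The plan is to bolt together three pieces already in place. From the discussion preceding the statement, primitivity forces condition (1) of Proposition \ref{prop: language realised} to hold, which will give $\mc{L}^n(\sub) = \mc{L}^n(X_\sub)$. Proposition \ref{PROP:prim-growing} gives $|\sub^i(a)| \to \infty$ for each $a \in \A$, which is exactly the hypothesis of Lemma \ref{LEM:legal-words}. Finally, Proposition \ref{prop: supertiling existence} supplies supertile predecessors that will collapse the resulting finite union to a single term.

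First I would spell out the verification of condition (1) of Proposition \ref{prop: language realised}, as sketched in the paragraph preceding the corollary: since $\sub$ is nontrivial, some $\sub^n(a) = xvy$ with $x$, $y \in \A$ and $v \in \A^+$; by primitivity applied to the non-empty open set $U$, there is a letter of $U$ inside $\sub^j(v)$ for all $j \geq p(U)$, and enlarging $j$ so that $|\sub^j(x)|, |\sub^j(y)| \geq n$ ensures $\sub^{n+j}(a)$ contains a generated word of length $2n+1$ whose central letter lies in $U$. Proposition \ref{prop: language realised} then delivers both $\mc{L}^n(\sub) = \mc{L}^n(X_\sub)$ and $\mc{L}^1(X_\sub) = \A$, so in particular every letter of $\A$ lies in some legal $2$-letter word.

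Next I would invoke Lemma \ref{LEM:legal-words} to obtain
\[
\mc{L}^n(\sub) = \bigcup_{j=0}^P S_n(\sub^j(\A \cup \mc{L}^2(\sub)))
\]
for any $P$ with $|\sub^P(a)| \geq n$ for all $a \in \A$. By the previous step, $\A$ is contained in the $1$-letter subwords of $\mc{L}^2(\sub)$, so the $\A$ term is absorbed and the union reduces to $\bigcup_{j=0}^P S_n(\sub^j(\mc{L}^2(\sub)))$, as foreshadowed by Remark \ref{rem: two-letter legals}. Finally, I would show this union is nested. Given $u \in \mc{L}^2(\sub) = \mc{L}^2(X_\sub)$, pick $w \in X_\sub$ with $u \triangleleft w$; Proposition \ref{prop: supertiling existence} writes $w$ as a shift of $\sub(x)$ for some $x \in X_\sub$, so $u$ lies in $\sub(v)$ for a subword $v \triangleleft x$ of length at most two, and (by the first step) $v$ can be taken to be a legal $2$-letter word. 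Thus $\mc{L}^2(\sub) \subseteq S_2(\sub(\mc{L}^2(\sub)))$, and applying $\sub^j$ followed by taking $n$-letter subwords yields $S_n(\sub^j(\mc{L}^2(\sub))) \subseteq S_n(\sub^{j+1}(\mc{L}^2(\sub)))$ for every $j$, so the nested union collapses to its largest term $S_n(\sub^P(\mc{L}^2(\sub)))$.

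The main obstacle, such as it is, is the last step: one must translate Proposition \ref{prop: supertiling existence}, which concerns bi-infinite elements of $X_\sub$, into a bounded-predecessor statement for finite legal $2$-letter words. Everything else is a direct assembly of previously established results.
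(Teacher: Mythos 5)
Your proposal is correct and follows essentially the same route as the paper, which derives the corollary from the paragraph preceding it (verifying condition (1) of Proposition \ref{prop: language realised} via primitivity), Proposition \ref{PROP:prim-growing} feeding into Lemma \ref{LEM:legal-words}, and Remark \ref{rem: two-letter legals} (which uses Proposition \ref{prop: supertiling existence} to absorb the $\A$ term and collapse the nested union to its top term). You have merely written out explicitly the details that the paper delegates to that remark.
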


For any non-empty open set of legal \emph{words} $U$ (rather than just a set of letters), it will be useful to find a power $p$ such that every $p$-superword $\sub^p(a)$ contains a word in $U$ as a subword.

\begin{prop}\label{PROP:uniform-legal-word-generation}
Let $\sub$ be primitive and $U \subseteq \mc{L}^n(\sub)$ be non-empty and open. Then there exists a power $p \geq 0$ such that, for every $a \in \A$, there is a subword of $\sub^p(a)$ in $U$.
\end{prop}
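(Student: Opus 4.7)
The plan is to combine primitivity at the letter level with a continuity argument that upgrades a single ``generating'' letter to an entire open neighborhood of such letters. Roughly, if the supertile $\sub^k(c)$ contains an $n$-letter subword lying in $U$, continuity of $\sub^k$ will ensure that $\sub^k(c')$ does too, for all $c'$ in a suitable open neighborhood $W$ of $c$; primitivity then forces every sufficiently high supertile $\sub^p(a)$ to contain a letter of $W$, and hence a subword in $U$.

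First I would unwrap the open set: since $U$ is open in $\mc{L}^n(\sub)$, write $U = U' \cap \mc{L}^n(\sub)$ for some open $U' \subseteq \A^n$. By definition, $\mc{L}^n(\sub)$ is the closure of the words of length $n$ generated by $\sub$, so density allows me to pick a generated $w \in U'$, say $w \triangleleft \sub^k(c)$ at position $i$ for some $c \in \A$ and $k \geq 0$.

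Next I would use Proposition \ref{PROP:continuous} applied to the iterate $\sub^k$: the set $(\sub^k)^{-1}(\A^m) \subseteq \A$ is open (where $m = |\sub^k(c)|$), and each projection $p_j \circ \sub^k$ is continuous on this open set. Combining these, the map $c' \mapsto (p_i(\sub^k(c')), \ldots, p_{i+n-1}(\sub^k(c'))) \in \A^n$ is continuous on an open neighborhood of $c$, and sends $c$ to $w \in U'$. Pulling back $U'$ yields an open neighborhood $W \ni c$ such that $\sub^k(c')$ has length at least $i+n$ and its $n$-letter subword starting at position $i$ lies in $U'$, for every $c' \in W$.

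Now I would invoke primitivity on $W$: there exists $q \geq 0$ such that for every $a \in \A$, $\sub^q(a)$ contains some letter $c' \in W$. Setting $p = k+q$, the supertile $\sub^p(a) = \sub^k(\sub^q(a))$ contains $\sub^k(c')$ as a sub-patch, which in turn contains an $n$-letter subword in $U'$. This subword is automatically in $\mc{L}^n(\sub)$ by Lemma \ref{lem: closure of language under subst and subwords}, so it lies in $U' \cap \mc{L}^n(\sub) = U$, completing the proof. The main subtlety is the continuity step: because $\sub$ need not be constant length, one must restrict to the open stratum where $\sub^k$ outputs a word of the right length in order to meaningfully compare positional projections. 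Once that is set up correctly, everything reduces to a single application of primitivity to the neighborhood $W$.
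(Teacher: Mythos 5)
Your proof is correct and follows essentially the same route as the paper's: pick a generated word in $U$ sitting inside some $\sub^k(c)$, use continuity of the positional projection composed with $\sub^k$ (on the open stratum of the right length) to get an open neighbourhood $W$ of $c$ whose level-$k$ supertiles all contain a word of $U$, and then apply primitivity to $W$ and take $p = k+q$. Your explicit handling of the length stratum and of $U$ being open in $\mc{L}^n(\sub)$ rather than in $\A^n$ just spells out details the paper leaves implicit.
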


\begin{proof}
Since $U$ is open, we can find a generated word $u \in U$ so that $u = p_i(\sub^k(b))$, where $p_i$ is the projection to the length $n$ subword beginning at the index $i$. Again using that $U$ is open, and continuity of $p_i \circ \sub^k$ (which is well defined on an open subset of $\A$), we have an open subset $V \subseteq \A$ for which $p_i \circ \sub^k(v) \in U$ for all $v \in V$. By primitivity, there is some $p \geq 0$ so that $\sub^p(a)$ contains an element of $V$ for all $a \in \A$. Then, for arbitrary $a \in \A$, we have $\sub^{p+k}(a)$ contains a word of $\sub^k(V)$, whose $i$th subword of length $n$ belongs to $U$, as required.
\end{proof}

Primitivity of $\sub$ is a strong condition and gives us that $X_\sub$ is minimal, hence every element has a dense orbit and is repetitive.

\begin{theorem}\label{THM:primitive-implies-minimal}
Let $\sub \colon \A \to \A^+$ be a substitution with associated subshift $X_\sub$. If $\sub$ is primitive, then $X_\sub$ is a minimal subshift.
\end{theorem}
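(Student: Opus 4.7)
The plan is to prove repetitivity of every \(w \in X_\sub\), from which minimality follows by Corollary \ref{CORO:minimal-equiv}. Fix \(w \in X_\sub\), \(n \geq 1\), and a non-empty open \(U \subseteq \mc{L}^n(w)\). Since subwords of \(w\) are legal, \(\mc{L}^n(w) \subseteq \mc{L}^n(\sub)\), and by the subspace topology there is an open \(V \subseteq \A^n\) with \(U = V \cap \mc{L}^n(w)\). It suffices to find \(N\) such that every \(v \in \mc{L}^N(\sub)\) contains a subword in \(V\), since any such subword of \(v \in \mc{L}^N(w)\) automatically lies in \(V \cap \mc{L}^n(w) = U\).

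My main tools would be Proposition \ref{PROP:uniform-legal-word-generation}, which turns primitivity into a power \(p\) such that every \(\sub^p(a)\) (\(a \in \A\)) meets a prescribed non-empty open set of legal \(n\)-letter words, and Proposition \ref{PROP:const-length}, which bounds \(K \coloneqq \max_{a \in \A} |\sub^p(a)|\). Applying the former to \(V \cap \mc{L}^n(\sub)\) (non-empty, since it contains any chosen \(u_0 \in U\)) and setting \(N = 2K+1\), a generated word \(v\) of length \(N\) sits inside some \(\sub^k(a)\); since \(|\sub^k(a)| \geq N > K\) and \(|\sub^k(a)|\) is monotone in \(k\), necessarily \(k \geq p\), so we can decompose \(\sub^k(a) = \sub^p(d_1) \cdots \sub^p(d_\ell)\) into level-\(p\) supertiles of length at most \(K\). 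The length-\((2K+1)\) window of \(v\) must then fully contain at least one \(\sub^p(d_j)\), and that supertile houses a subword in \(V\).

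The main obstacle is extending this from generated to general legal words. A typical \(v \in \mc{L}^N(\sub)\) is only the net-limit of generated words \(v_\lambda\) of length \(N\). Each \(v_\lambda\) has a \(V\)-subword at some position in the finite set \(\{1, \dots, N-n+1\}\); a subnet argument stabilises the position to some \(i\), and continuity of the projection \(p_{[i, i+n-1]} \colon \A^N \to \A^n\) yields the corresponding subword of \(v\) as a limit of elements of \(V\) --- but this limit lies only in \(\overline{V}\), not \(V\). To handle this, I would, at the outset, shrink \(V\) to an open \(V'\) with \(u_0 \in V' \subseteq \overline{V'} \subseteq V\) using normality of the compact Hausdorff space \(\A^n\), and then run the generated-word argument with \(V'\) in place of \(V\). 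Limits of \(V'\)-subwords then land in \(\overline{V'} \subseteq V\) as required, and repetitivity (hence minimality) follows.
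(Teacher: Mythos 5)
Your argument is sound and hinges on the same key input as the paper---Proposition \ref{PROP:uniform-legal-word-generation}, which converts primitivity into a power $p$ for which every level-$p$ supertile meets a prescribed open set of legal words---but the second half is genuinely different. The paper decomposes the bi-infinite word $w \in X_\sub$ itself into level-$p$ supertiles via Proposition \ref{prop: supertiling existence} (applied to $\sub^p$), so that a word of $U$ appears inside every level-$p$ supertile of $w$ and hence with gaps bounded in terms of $|\sub^p|$; this leans on the supertiling-decomposition machinery but needs no further limit argument. You instead work entirely with finite words: the windowing argument (a window of length $2K+1$ in a concatenation of blocks of length at most $K$ swallows a whole block) handles generated words, and the passage to arbitrary legal words is done by a net limit combined with the shrinking $u_0 \in V' \subseteq \overline{V'} \subseteq V$, which correctly repairs the open-versus-closed mismatch a naive limit would create. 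Both routes are valid; yours avoids Proposition \ref{prop: supertiling existence} at the cost of extra topological bookkeeping, and the stabilisation of the position $i_\lambda$ over the finite set $\{1,\dots,N-n+1\}$ is legitimate for nets.

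The one loose end is the final parenthetical ``(hence minimality)''. Repetitivity of each $w \in X_\sub$ gives, via Corollary \ref{CORO:minimal-equiv}, that each orbit closure $X_w$ is minimal---not yet that $X_\sub$ is: a priori $X_\sub$ could be a disjoint union of several minimal sets, every point of which is repetitive. You must add that $X_w = X_\sub$ for every $w \in X_\sub$, and your own construction supplies this: you showed that every $v \in \mc{L}^N(\sub)$ (not merely every $v \in \mc{L}^N(w)$) contains a subword in $\overline{V'} \subseteq V$; applying this to the genuine subwords $w_{[k,k+N-1]}$ shows that every neighbourhood of every legal $n$-letter word meets $\mc{W}^n(w)$, whence $\mc{L}(\sub) \subseteq \mc{L}(w)$ and so $X_\sub \subseteq X_w$. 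The paper closes the same step with a single sentence at the end of its proof; you should do likewise.
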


\begin{proof}
Let \(U \subseteq \mc{L}^n(\sub)\) be any open subset. By Proposition \ref{PROP:uniform-legal-word-generation}, there is some \(p \geq 0\) such that, for all \(a \in \A\), there is a subword of \(\sub^p(a)\) in \(U\). Take any \(w \in X_\sub\). By Proposition \ref{prop: supertiling existence}, there exists some \(w' \in X_\sub\) such that \(\sub^p(w') = \sigma^i(w)\), where \(0 \leq i \leq |\sub^p|-1\). So the word \(\sub^p(w_0)\), and hence a word in \(U\), appears within a uniformly bounded distance of the origin in \(w\) and hence \(w\) is repetitive. Thus, by Corollary \ref{CORO:minimal-equiv}, \(X_w\) is minimal. It is clear that \(X_w = X_\sub\), since the above shows that any non-empty open \(U \subseteq \mc{L}^n(\sub)\) contains a subword of any \(w \in X_\sub\).
\end{proof}

It is interesting to note, therefore, that by the above and Propositions \ref{PROP:do-iff-tt} and \ref{PROP:minimal-rep}, if the alphabet $\A$ is non-separable, then all substitutions on $\A$ are non-primitive.

\begin{coro}\label{COR:prim-implies-sep}
If $\sub$ is primitive, then $\A$ is separable.
\end{coro}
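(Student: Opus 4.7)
The plan is to combine three earlier facts: primitivity gives a non-empty minimal subshift; in a minimal subshift every point has dense orbit; and a subshift with a dense-orbit point has separable alphabet (at least once every letter has been observed to appear).

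First, I would invoke Corollary \ref{COR:irred-non-empty} to note that $X_\sub \neq \varnothing$, and Theorem \ref{THM:primitive-implies-minimal} to conclude that $X_\sub$ is minimal. Then, picking any $w \in X_\sub$, Proposition \ref{PROP:minimal-rep} gives that $\orb(w)$ is dense in $X_\sub$.

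The next step is to justify applying the lemma stating that a subshift with a dense-orbit element has separable alphabet. Strictly speaking, that lemma is proved under the convention of Remark \ref{rem: all letters} that every letter of $\A$ actually appears in some word of the subshift, so I would verify this in the primitive setting. The observation recorded just before Corollary \ref{cor: primitive 2 letters generate} shows that condition (1) of Proposition \ref{prop: language realised} is satisfied whenever $\sub$ is primitive, hence so is condition (3), giving $\mc{L}^1(X_\sub) = \A$. Thus every letter genuinely appears, so the dense-orbit lemma applies verbatim to $X_\sub$ and yields separability of $\A$.

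This is essentially a bookkeeping corollary, so the only real obstacle is citing the correct sequence of prior results; in particular, ensuring that the implicit assumption in Remark \ref{rem: all letters} is met so that the separability lemma can be applied without passing to a proper subalphabet (which would weaken the conclusion).
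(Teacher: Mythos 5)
Your proof is correct and follows essentially the same route as the paper, which derives the corollary from Theorem \ref{THM:primitive-implies-minimal} together with Propositions \ref{PROP:minimal-rep} and \ref{PROP:do-iff-tt} (equivalently, the lemma that a dense-orbit element forces $\A$ to be separable). Your explicit verification that $\mc{L}^1(X_\sub) = \A$ in the primitive case --- so that the separability lemma applies to all of $\A$ rather than to a proper subalphabet --- is a detail the paper leaves implicit via Remark \ref{rem: all letters}, and it is a worthwhile addition.
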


\begin{remark}
In the case when $\A$ is zero-dimensional and metrisable, Theorem~\ref{THM:primitive-implies-minimal} is already known  under an equivalent assumption \cite{DOP:self-induced}. In this work, it was shown that the family of primitive substitution subshifts is in one-to-one correspondence with minimal self-induced Cantor systems. 
The self-induced property implies the dichotomy result that, for this family, the topological entropy $h_{\text{top}}(X_{\sub})$ is either $0$ or $\infty$; see \cite[Prop.~3]{DOP:self-induced}. 
\end{remark}

\subsection{Realising set substitutions with compact alphabets}
Suppose that $S$ is a set, and we have a function $\sub \colon S \to S^+$. It is natural to ask whether we may extend $\sub$ to a continuous substitution on a compact Hausdorff alphabet. In the finite case this is clear, by using the discrete topology on $S$. Combinatorial substitutions with $S$ countable have already been studied, for example in the work of Ferenczi \cite{Ferenczi}. As we have seen, such a substitution on a compact alphabet would necessarily be of bounded length. Given this necessary restriction, we may in fact always find such a compactification:

\begin{theorem}
Let $\sub \colon S \to S^+$ with $|\sub| < \infty$. Then there is a compact Hausdorff alphabet $\A$, a dense inclusion $\iota \colon S \hookrightarrow \A$ and a substitution $\overline{\sub} \colon \A \to \A^+$ so that $\overline{\sub}(\iota a) = \iota \sub(a)$ for all $a \in S$. Moreover, we may choose $\iota$ so that $\iota(S)$ is the set of isolated points of $\A$.
\end{theorem}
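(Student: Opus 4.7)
The plan is to take $\A \coloneqq \beta S$, the Stone--\v{C}ech compactification of $S$ equipped with the discrete topology, and let $\iota \colon S \hookrightarrow \beta S$ be the canonical embedding. Standard facts about $\beta S$ give immediately that $\A$ is compact Hausdorff and $\iota(S)$ is dense; moreover, since the discrete space $S$ is locally compact Hausdorff, $\iota(S)$ is open in $\beta S$. Consequently every $s \in S$ is isolated in $\A$ (the singleton $\{\iota s\}$ is open in the open subset $\iota(S)$), and density of $\iota(S)$ prevents any other point of $\A$ from being isolated. This handles the moreover clause at once.

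To construct $\overline{\sub}$, set $K \coloneqq |\sub| < \infty$. The image of $\sub$ lies in the truncation $\bigsqcup_{k=1}^{K} S^k$, which sits inside $\bigsqcup_{k=1}^{K} \A^k \subseteq \A^+$. The space $\bigsqcup_{k=1}^{K} \A^k$ is a finite disjoint union of finite products of the compact Hausdorff space $\A$, and so is itself compact Hausdorff. The resulting map $S \to \bigsqcup_{k=1}^{K} \A^k$ is automatically continuous (since $S$ is discrete), and the universal property of the Stone--\v{C}ech compactification then extends it uniquely to a continuous map $\overline{\sub} \colon \A \to \bigsqcup_{k=1}^{K} \A^k \subseteq \A^+$. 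By construction $\overline{\sub}(\iota a) = \iota \sub(a)$ for all $a \in S$, identifying words in $S^+$ with their images in $\A^+$ under the natural inclusion.

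The principal subtlety is arranging that the codomain of $\sub$ is compact Hausdorff so that the universal property of $\beta S$ can be applied, and this is exactly where the bounded-length hypothesis $|\sub| < \infty$ is used. Without it, the target would be the full disjoint union $\A^+ = \bigsqcup_{n \geq 1} \A^n$, which is not compact when $\A$ is infinite, and no such direct extension would be available by this route. Everything else reduces to standard properties of the Stone--\v{C}ech compactification of a discrete locally compact space, namely that it realises the underlying space as a dense open subset whose points are precisely the isolated points of the compactification.
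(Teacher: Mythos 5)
Your proposal is correct and follows essentially the same route as the paper: compactify $S$ via $\beta S$ with the discrete topology, view $\sub$ as a continuous map into the compact Hausdorff space $\bigsqcup_{k=1}^{K}(\beta S)^k$ (this is where $|\sub|<\infty$ is needed), and extend by the universal property. The only difference is that you spell out why $\iota(S)$ is exactly the set of isolated points (openness of $\iota(S)$ in $\beta S$ plus density), which the paper asserts without detail.
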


\begin{proof}
Since $L \coloneqq |\sub| < \infty$, we may write $\sub \colon S \to S \sqcup S^2 \sqcup \cdots \sqcup S^L$. Equip $S$ with the discrete topology and let $\iota \colon S \hookrightarrow \beta S$ be its Stone--\v{C}ech compactification. Then $\iota$ naturally embeds $S^n$ into $(\beta S)^n$ for each $n \in \N$, so we may regard $\sub$ as a map
\[
\sub \colon S \to \beta S \sqcup (\beta S)^2 \sqcup \cdots \sqcup (\beta S)^L.
\]
This is continuous, since all maps from discrete spaces are. As a finite disjoint union of compact Hausdorff spaces, we have that $\beta S \sqcup (\beta S)^2 \sqcup \cdots \sqcup (\beta S)^L$ is compact Hausdorff. Hence, by the universal property of the Stone--\v{C}ech compactification, we may extend $\sub$ to a continuous map
\[
\overline{\sub} \colon \beta S \to \beta S \sqcup (\beta S)^2 \sqcup \cdots \sqcup (\beta S)^L.
\]
This is nothing other than a continuous substitution of maximal length $L$ on the compact, Hausdorff alphabet $\beta S$. This compactification makes $\iota(S) \subseteq \beta S$ a dense subspace of isolated points, as required.
\end{proof}

Note that the approach in the above proof, using the Stone--\v{C}ech compactification, allows us to compactify any continuous substitution on a topologised alphabet that is Hausdorff (which will contain the original alphabet as a dense subspace). Although the above answers our question on realising arbitrary (infinite) set substitutions with continuous, compact substitutions, it is far from ideal for typical examples, since the Stone--\v{C}ech compactification is usually unwieldy. For example, $\beta \N_0$ has cardinality $2^\mathfrak{c}$, where $\mathfrak{c}$ is the cardinality of the continuum. In practice there are often more obvious compactifications. For example, the substitution $\sub\colon n \mapsto 0 \ n\!+\!1$ on $\N_0$ may be clearly extended to the one-point compactification, by defining $\infty \mapsto 0 \ \infty$, and it is easily checked that this is primitive; see \cite{Ferenczi,PFS:fusion-ILC}.

\section{The substitution operator and natural length functions}\label{SEC:tile lengths}

For a substitution \(\sub\), a continuous and non-zero \(\ell \colon \A \to \R_{\geq 0}\) is called a \emph{natural length function} for \(\sub\) if, for all $a \in \A$,
\begin{equation} \label{eq: natural length}
\sum_{b \triangleleft \sub(a)} \ell(b) = \lambda \cdot \ell(a),
\end{equation}
where the notation $\sum_{b \triangleleft \sub(a)}$ enumerates over each $b$-entry of $\sub(a)$, including multiplicities. Here, \(\lambda \geq 0\) is called the \emph{inflation factor}. In practice, we really want \(\lambda > 1\) in order to represent an inflation, and \(\ell(a) > 0\) for all \(a \in \A\) in order to represent tiles as closed intervals with positive length. For a primitive (or, more generally, irreducible) substitution, both of these properties derive from Equation \ref{eq: natural length} (in addition to non-negativity and non-triviality of \(\ell\)), see Theorem \ref{thm: irreducible => unique nlf}. However, the results will be easier to develop in the above slightly generalised setting.

Geometrically, we thus have an associated inflation rule where all letters $a \in \A$ are assigned to a tile of length $\ell(a)$, depending continuously on their location in $\A$, so that under substitution these tiles are inflated by a factor of $\lambda$ and then perfectly dissected into other tiles, each with original lengths defined by \(\ell\). Fixed points of the substitution $\sub$ (if they exist) then give rise to self-similar tilings of $\R$.

\subsection{The substitution operator}

\begin{definition}
We let $E = C(\A)$ denote the Banach space of continuous functions $f \colon \A \to \R$, which is equipped with norm $\|f\| \coloneqq \sup\{|f(a)| \mid a \in \A\}$. Call an element $f \in E$ \emph{positive} (resp.\ \emph{strictly positive}) if $f(a) \geq 0$ (resp.\ $f(a) > 0$) for all $a \in \A$. Let $K$ denote the \emph{positive cone} of positive elements, and $K_{>0}$ denote the set of strictly positive elements. Given $f$, $g \in E$, we write $f \leq g$ if $f(a) \leq g(a)$ for all $a \in A$ i.e., $g - f \in K$. Given this partial order, $E$ is a Banach lattice; see \cite{SchaeferBook}. 
 We have the \emph{order interval} $[f,g] = \{z \in E \mid f \leq z \leq g\}$.

The dual of $E$ is the Banach lattice $E'$ of continuous homomorphisms $\phi \colon E \to \R$, which has norm $\|\phi\| \coloneqq \sup_{\|f\|\leq 1} |\phi(f)|$. We sometimes write $\langle \phi,f \rangle \coloneqq \phi(f)$. For an operator $M \colon E \to E$, the dual operator $M'$ is defined by $(M'\phi)(f) \coloneqq \langle \phi,Mf \rangle$. The dual cone $K' \subset E'$ is defined as the set of $\phi \in E'$ for which $\phi(f) \geq 0$ whenever $f \in K$.
\end{definition}

By the Riesz--Markov--Kakutani representation theorem, there is a bijection between continuous linear functionals $\phi$ on $E = C(\A)$ and regular signed finite Borel measures $\mu$ on $\A$, where we identify
\[
\langle \phi , f \rangle \longleftrightarrow \int_{\A} f \mathrm{d} \mu,
\]
for continuous `test functions' $f \in E$. Positive functionals on \(E\) (which are necessarily continuous) i.e., the elements of \(K'\), may be identified with the (unsigned) regular finite Borel measures on \(\A\).

\begin{remark} \label{rem: regular measures}
Because of the above correspondence, all measures here will be assumed to be regular. For example, when we speak of \emph{unique ergodicity} in Section~\ref{SEC:ergodicity}, it is meant that there is a unique \emph{regular} Borel probability measure. Note that a Borel probability measure on a compact Hausdorff alphabet $\A$ is automatically regular in most cases of interest, including when $\A$ has a countable base for its topology, or when $\A$ is metrisable.
\end{remark}

Given any `length function' $f \in E$ (not necessarily one that satisfies Equation~\eqref{eq: natural length}), it is still meaningful to talk of the `length' of the substitute of a tile $a \in \A$, by summing the lengths of the letters of $\sub(a)$. So, we define
\begin{equation} \label{eq: substitution operator}
M_{\sub}:=M \colon E \to E, \qquad (M f)(a) \coloneqq \sum_{b \triangleleft \sub(a)} f(b), 
\end{equation}
where $\sub(a)$ is seen as a multiset (so we sum including multiplicities as in Equation~\eqref{eq: natural length}).

To find a natural length function is thus to find a positive eigenvector of $M$, which we call the \emph{substitution operator}. Note that $M$ only depends on the substitution considered as a multi-valued map, i.e., it does not depend on the order of letters of each superword.

\begin{remark}
We note that an analogous operator called the \emph{transition map} was introduced in \cite{PFS:fusion, PFS:fusion-ILC} in the geometric setting for fusion tilings. Before this, such an operator has already been used to determine statistical properties of generalised pinwheel tilings in \cite{S:pinwheel}.
\end{remark}

\begin{example}
Let $\A$ be finite and with the discrete topology. A basis for $E$ is given by the indicator functions $\bbo_a$ for $a \in \A$, where $\bbo_a(a) = 1$ and $\bbo_a(b) = 0$ for $a \neq b \in \A$. Then
\[
(M \bbo_a)(b) = \sum_{c \triangleleft \sub(b)} \bbo_a(c) = \sum_{a \triangleleft \sub(b)} \bbo_a(a) = M_{ab}
\]
where $M_{ab}$ is the number of occurrences of $a$ in $\sub(b)$. So we may write
\[
M \bbo_a = \sum_{b \in \A} M_{ab} \cdot \bbo_b
\]
and hence, with respect to this basis, $M$ is represented by the matrix $(M)_{ba}$. It follows that the substitution operator \(M\) is represented by the transpose of the \emph{Abelianisation} or \emph{substitution matrix} of the substitution in the finite letter setting.
\end{example}

It is easy to see that $M^n$ is the operator that replaces a length function $f$ with its sum over $n$-superwords:

\begin{lem} \label{lem: powers of operator}
For $n \in \N$ we have $\displaystyle (M^n f)(a) = \sum_{b \triangleleft \sub^n(a)} f(b)$. That is, $M_{\sub^n}=\big(M_{\sub}\big)^n$.

\end{lem}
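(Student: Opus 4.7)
The natural approach is induction on $n$. The base case $n = 1$ is immediate from the definition of $M$, namely $(Mf)(a) = \sum_{b \triangleleft \sub(a)} f(b)$.

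For the inductive step, suppose the claim holds for some $n \geq 1$. Then by definition of $M$ and the inductive hypothesis applied to $M^n f$ (evaluated at each $c \in \A$),
\[
(M^{n+1} f)(a) = M(M^n f)(a) = \sum_{c \triangleleft \sub(a)} (M^n f)(c) = \sum_{c \triangleleft \sub(a)} \sum_{b \triangleleft \sub^n(c)} f(b).
\]
It therefore suffices to verify the combinatorial identity
\[
\sum_{c \triangleleft \sub(a)} \sum_{b \triangleleft \sub^n(c)} f(b) = \sum_{b \triangleleft \sub^{n+1}(a)} f(b),
\]
respecting multiplicities and positions. This follows from the fact that $\sub^{n+1}(a) = \sub^n(\sub(a))$ together with the extension of $\sub^n$ to words by concatenation: if $\sub(a) = c_1 c_2 \cdots c_k$, then $\sub^{n+1}(a) = \sub^n(c_1)\, \sub^n(c_2) \cdots \sub^n(c_k)$, so the multiset of letters appearing in $\sub^{n+1}(a)$ is precisely the disjoint union, over positions $c_i \triangleleft \sub(a)$, of the multisets of letters appearing in $\sub^n(c_i)$.

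There is no real obstacle here; the only care required is tracking multiplicities properly, which is handled once we adopt the convention (already used in the definition of $M$) that $\sum_{b \triangleleft w}$ sums over the positions of $w$ rather than the underlying set of letters. With this convention, both the outer sum $\sum_{c \triangleleft \sub(a)}$ and the inner sums $\sum_{b \triangleleft \sub^n(c)}$ enumerate positions, and their combined enumeration matches exactly the enumeration of positions in the concatenated word $\sub^{n+1}(a)$, completing the induction.
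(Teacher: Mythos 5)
Your proof is correct and follows essentially the same route as the paper's: induction on $n$, writing $(M^{n+1}f)(a) = M(M^nf)(a)$ as a double sum over $c \triangleleft \sub(a)$ and $b \triangleleft \sub^n(c)$, and collapsing it via $\sub^{n+1}(a) = \sub^n(\sub(a))$. The extra care you take over multiplicities and positions is a welcome elaboration of a step the paper leaves implicit, but it is not a different argument.
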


\begin{proof}
The above is true for $n=1$ by definition of $M$. Suppose that it holds for $n \in \N$. Then
\[
(M^{n+1} f)(a) = M \left(b \mapsto \sum_{c \triangleleft \sub^n(b)} f(c) \right)(a) = \sum_{b \triangleleft \sub(a)}\sum_{c \triangleleft \sub^n(b)} f(c) = \sum_{c \triangleleft \sub^{n+1}(a)} f(c),
\]
so the result also holds for $n+1$ and thus for all $k \in \N$ by induction.
\end{proof}

Clearly $M$ is linear and \emph{positive}, which is to say that $M(K) \subseteq K$. Moreover $M$ is a bounded operator, since, for all $a \in \A$, we have $|M(f)(a)| = |\sum_{b \triangleleft \sub(a)} f(b)| \leq \max|\sub(a)| \cdot \max|f| = |\sub| \cdot \|f\|$, where the length \(|\sub|\) of the substitution is bounded by Proposition \ref{PROP:const-length}. In fact, we have the following formula for the operator norm of $M^n$. Let $\bbo$ be the constant function $a \mapsto 1$.

\begin{coro} \label{coro: operator norm}
For $n \in \N$, the operator norm of $M^n$ is given by
\[
\|M^n\| = \|M^n(\bbo)\| = |\sub^n| .
\]
\end{coro}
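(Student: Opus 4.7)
The statement has two equalities to establish: $\|M^n\| = \|M^n(\mathbbm{1})\|$ and $\|M^n(\mathbbm{1})\| = |\sub^n|$. Both follow fairly directly from the formula of Lemma \ref{lem: powers of operator}, so the proof will be short. The key idea is that $\mathbbm{1}$ attains the operator norm because $M^n$ is a positive operator and the norm on $E$ is the sup norm.

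First I would evaluate $M^n$ at the constant function $\mathbbm{1}$: by Lemma \ref{lem: powers of operator},
\[
(M^n \mathbbm{1})(a) = \sum_{b \triangleleft \sub^n(a)} 1 = |\sub^n(a)|.
\]
Taking the supremum over $a \in \A$ yields $\|M^n \mathbbm{1}\| = \sup_{a \in \A} |\sub^n(a)| = |\sub^n|$, which is the second equality (and is finite thanks to Proposition \ref{PROP:const-length} applied to $\sub^n$).

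For the first equality, the lower bound $\|M^n\| \geq \|M^n \mathbbm{1}\|$ is immediate from $\|\mathbbm{1}\| = 1$ and the definition of operator norm. For the upper bound, let $f \in E$ with $\|f\| \leq 1$. Then for each $a \in \A$, applying the triangle inequality to the formula of Lemma \ref{lem: powers of operator} gives
\[
|(M^n f)(a)| \leq \sum_{b \triangleleft \sub^n(a)} |f(b)| \leq |\sub^n(a)| \cdot \|f\| \leq |\sub^n|,
\]
so $\|M^n f\| \leq |\sub^n|$ and hence $\|M^n\| \leq |\sub^n| = \|M^n \mathbbm{1}\|$.

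There is no real obstacle here; the only subtlety is simply recognising that positivity together with the fact that $\mathbbm{1}$ dominates every unit-norm function in absolute value (i.e.\ $|f| \leq \mathbbm{1}$) means $\mathbbm{1}$ itself achieves the operator norm. This is a standard feature of positive operators on $C(X)$ that will presumably be used again later in the paper.
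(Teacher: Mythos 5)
Your proof is correct and follows essentially the same route as the paper, which simply asserts that $\|M^n f\|$ over $\|f\|\leq 1$ is maximised by $\mathbbm{1}$ as an easy consequence of Lemma \ref{lem: powers of operator}; you have filled in the routine triangle-inequality details of that assertion. No issues.
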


\begin{proof}
As $M^n$ is a positive operator, for $\|f\| \leq 1$, the norm $\|M^n f\|$ is maximised by the constant function $\bbo$, for which $(M^n \bbo)(a) = |\sub^n(a)|$ by Lemma \ref{lem: powers of operator}.
\end{proof}

In the finite setting, the existence of a positive non-zero length function follows from Perron--Frobenius theory. Surprisingly, this does not hold for all substitutions on compact alphabets:

\begin{example} \label{exp: non-growing, no length}
Consider again the substitution of Example \ref{exp: non-growing} on the alphabet $\A = \N_{\infty} \times \N_{\infty}$, for which $(n,m) \mapsto (n,m-1)$ for $m > 0$, $(n,0) \mapsto (n-1,n)(0,n)$ for $n > 0$ and $(0,0) \mapsto (0,0)$. It is not hard to show that this substitution has no continuous, positive, non-zero length function \(\ell\). Dropping continuity of \(\ell\), one finds that \(M \ell = \lambda \ell\) implies that \(\lambda = 1\) and
\[
\ell(n,m) =
\begin{cases}
(n+1)\alpha_1,       & \mbox{if } n, m \neq \infty, \\
\alpha_2,            & \mbox{if } n = \infty, m = 0, \\
\alpha_3,            & \mbox{if } n = 0, m = \infty, \\
\alpha_2 - \alpha_3, & \mbox{if } n = m = \infty, \\
\beta_n,             & \mbox{if } m = \infty,
\end{cases}
\]
where \(\alpha_1\), \(\alpha_2 \geq \alpha_3\) and \(\beta_n\) are arbitrary non-negative numbers, with at least one strictly positive. Note that \(\ell\) is unbounded if \(\alpha_1 \neq 0\), and if \(\alpha_1 = 0\) then \(\ell = 0\) on a dense subset of \(\A\).
\end{example}

\begin{example} \label{exp: growing, no length}
The substitution in the above example was primarily motivated by its pathological non-growth property, perhaps making it less surprising that it does not have a continuous length function. However, it can be easily modified to be growing in length on all letters but still admitting no continuous length function. We double up the substitution on \((0,0)\) and triple it on all other letters: on \(\N_0 \times \N_0\) we define
\[
\sub \colon
\left\{
\begin{array}{rll}
(0,0)\ \mapsto & (0,0) (0,0),           &                    \\
(n,m)\ \mapsto & (n,m-1)(n,m-1)(n,m-1)  & \text{ for } m > 0, \\
(n,0)\ \mapsto & (n-1,n)(n-1,n)(n-1,n)(0,n)(0,n)(0,n) & \text{ otherwise,}
\end{array}
\right.
\]

which uniquely defines the substitution \(\sub\) on \(\A = \N_\infty \times \N_\infty\) by continuity. Since all letters in \(\N_0 \times \N_0\) eventually map to \((0,0)\), it is clear that \(\ell(0,0) \neq 0\), otherwise \(\ell = 0\) by continuity, so also \(\ell(n,m) \neq 0\) for all \((n,m) \in \N_0 \times \N_0\). Since \((0,0) \mapsto (0,0)(0,0)\), the inflation constant must be \(\lambda = 2\). But for \((n,m) \in \N_0 \times \N_0\), with \(m>0\), we have
\[
2 \ell(n,m+1) = \lambda \ell(n,m+1) = \ell(n,m) + \ell(n,m) + \ell(n,m) \implies \ell(n,m+1) = \frac{3}{2}\ell(n,m) \neq 0,
\]
so \(\ell\) is unbounded, contradicting continuity. Thus, this substitution admits no continuous and non-zero length function.
\end{example}

Although the above shows that not all substitutions have a natural length function, it seems reasonable to conjecture this for primitive substitutions:

\begin{question}
Let $\sub$ be a primitive substitution on a compact, Hausdorff alphabet. Does $\sub$ admit a continuous natural length function?
\end{question}

We are not currently able to resolve the above question in full generality. However, we will find some conditions under which this holds. In the remainder of this subsection, we will consider general properties of the Banach space $E$, the operator $M$ and implications of primitivity.

\begin{remark} \label{rem: tychonoff}
Because \(\A\) is compact Hausdorff, it is Tychonoff, i.e., Hausdorff and completely regular. This means that for any closed set \(C \subseteq \A\) and \(a \notin C\), there exists a continuous function \(f \colon \A \to [0,1]\) with \(f(a) = 1\) and \(f(C) = \{0\}\), which will be used in several proofs below.
\end{remark}

\begin{prop} \label{prop: primitive <=> st. positive iterates}
The substitution $\sub$ is primitive if and only if, for all non-zero $f \in K$, there exists some $p \in \N$ with $M^p(f)\in K_{>0}$.
\end{prop}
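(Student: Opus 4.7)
The plan is to prove both directions by translating between open-set conditions on $\A$ (primitivity) and positivity conditions on $E = C(\A, \R)$, using the identity $(M^p f)(a) = \sum_{b \triangleleft \sub^p(a)} f(b)$ from Lemma \ref{lem: powers of operator}.

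For the forward direction, assume $\sub$ is primitive, and let $f \in K$ be non-zero. First I would use continuity and non-triviality of $f$: there exists $a_0 \in \A$ with $f(a_0) > 0$, so by continuity the set $U \coloneqq \{a \in \A \mid f(a) > f(a_0)/2\}$ is a non-empty open subset of $\A$. Apply primitivity (Definition \ref{DEF:prim}) to this $U$ to get $p \geq 0$ such that every $\sub^p(a)$ contains some letter $b \in U$. Then for every $a \in \A$,
\[
(M^p f)(a) = \sum_{c \triangleleft \sub^p(a)} f(c) \geq f(b) > \tfrac{1}{2}f(a_0) > 0,
\]
since all summands are non-negative (as $f \in K$) and one of them is strictly positive. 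Hence $M^p f \in K_{>0}$.

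For the converse, assume the positivity hypothesis and let $U \subseteq \A$ be an arbitrary non-empty open set. I need to produce a power $p$ so that every $\sub^p(a)$ meets $U$. The key device is to construct a test function supported in $U$: pick any $u \in U$; since $\A$ is compact Hausdorff and hence Tychonoff (Remark \ref{rem: tychonoff}), applied to the closed set $C = \A \setminus U$ and the point $u \notin C$, there exists a continuous $f \colon \A \to [0,1]$ with $f(u) = 1$ and $f \equiv 0$ on $\A \setminus U$. In particular, $f \in K$ is non-zero and $\{f > 0\} \subseteq U$. By hypothesis, there is some $p$ with $M^p f \in K_{>0}$, so for every $a \in \A$ we have
\[
0 < (M^p f)(a) = \sum_{b \triangleleft \sub^p(a)} f(b),
\]
which forces at least one $b \triangleleft \sub^p(a)$ to satisfy $f(b) > 0$, i.e.\ $b \in U$. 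This is exactly primitivity.

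I expect neither direction to present serious difficulty; the main point worth checking carefully is the converse, where one must verify that the Tychonoff property (rather than anything stronger, like metrisability) suffices to produce the indicator-like test function $f$ supported in $U$. Once that bump function is in hand, the spectral/positivity hypothesis immediately delivers the combinatorial statement needed for primitivity via the formula of Lemma \ref{lem: powers of operator}.
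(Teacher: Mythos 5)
Your proof is correct and follows essentially the same route as the paper's: the forward direction applies primitivity to the open positivity set of $f$ (the paper uses $f^{-1}(0,\infty)$ where you use a half-level set, an immaterial difference), and the converse uses exactly the same Tychonoff bump function supported in $U$ together with Lemma \ref{lem: powers of operator}. No gaps.
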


\begin{proof}
Let $\sub$ be primitive and $0 \neq f \geq 0$. Let $U = f^{-1}(0,\infty)$, which is open and non-empty. By primitivity, there exists some $p \in \N$ so that for any $a \in \A$ we have that $\sub^p(a)$ contains some $b \in U$, which is to say that $f(b) > 0$. Since $f \geq 0$ and $f(b) > 0$ for some $b \in \sub^n(a)$, it follows from Lemma \ref{lem: powers of operator} that $(M^p f)(a) > 0$ for all $a \in \A$.

Conversely, suppose the given condition on $M$ holds and that $U \subset \A$ is open and non-empty. Let \(C = \A \setminus U\), \(a_0 \in U\) be arbitrary and \(f \colon \A \to [0,1]\) be continuous with \(f(a_0) = 1\) and \(f(C) = \{0\}\) (Remark \ref{rem: tychonoff}). By assumption there exists some $p \in \N$ so that $M^p(f) > 0$. By Lemma \ref{lem: powers of operator}, for every $a \in \A$ we have that
\[
\sum_{b \triangleleft \sub^p(a)} f(b) > 0.
\]
Since $f \geq 0$ and $f(b) = 0$ for all $b \notin U$, this implies that $b \in U$ for some $b \triangleleft \sub^p(a)$, as required.
\end{proof}

We recall properties of positive cones for general Banach lattices from \cite{Schaefer60}. A positive cone $K \subset E$ is called \emph{proper} if $K \cap (-K) = \{0\}$, \emph{generating} if $E = K-K$ and \emph{normal} if $\|x+y\| \geq \|y\|$ for all $x$, $y \in K$. The ordered Banach space $(E,K)$ is said to have the \emph{decomposition property} \cite{Abdelaziz} if, for all $x$, $y$ and $z \in K$ with $z \leq x+y$, there exist $b_1$, $b_2 \in K$ with $z = b_1 + b_2$ and $b_1 \leq x$, $b_2 \leq y$. 

\begin{definition}
A subset $A \subset E$ is called \emph{total} if the linear span of $A$ is dense in $E$.
\end{definition}

\begin{definition} \label{def: qi}
A point $f \in E$ is called \emph{quasi-interior} to $K$ if the order interval $[0,f]$ is a total subset of $E$.
\end{definition}

For the next results, we go back to our setting where $E=C(\A)$ and $K$ is the positive cone of non-negative functions. The proof of the following is routine, so we omit it.

\begin{lem} \label{lem: cone properties}
We have that $K \subset E$ is a closed, proper, generating and normal cone, with interior $\mathrm{int}(K) = K_{>0}$. The Banach space $(E,K)$ has the decomposition property.
\end{lem}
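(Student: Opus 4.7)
The approach is entirely pointwise: since $E = C(\A,\R)$ and $K$ is defined by pointwise non-negativity, each of the claimed properties reduces to a pointwise check, with compactness of $\A$ used only in the identification of $\mathrm{int}(K)$.

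First I would dispatch the four basic cone properties in quick succession. Closedness of $K$ follows because sup-norm convergence implies pointwise convergence, so the inequality $f \geq 0$ is preserved under limits. Properness is the trivial observation that $f \geq 0$ and $-f \geq 0$ force $f \equiv 0$. For generatedness I would use the standard lattice decomposition $f = f^+ - f^-$ with $f^\pm(a) := \max\{\pm f(a), 0\}$, both continuous since the max of two continuous real-valued functions is continuous. Normality is then the one-line estimate $\|x+y\| = \sup_{a \in \A}(x(a)+y(a)) \geq \sup_{a \in \A} y(a) = \|y\|$ whenever $x,y \in K$.

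Next I would identify $\mathrm{int}(K)$ with $K_{>0}$. For the forward inclusion, if $f \in K_{>0}$ then $\varepsilon := \min_{a \in \A} f(a) > 0$, with the minimum attained by continuity of $f$ on the compact space $\A$; any $g$ with $\|g - f\| < \varepsilon$ then satisfies $g(a) \geq f(a) - \|g-f\| > 0$ for all $a$, so $f$ lies in the interior. Conversely, if $f \in K$ vanishes at some $a_0 \in \A$, then $f - t\mathbbm{1}$ lies within sup-norm distance $t$ of $f$ yet takes the value $-t$ at $a_0$, so $f$ is not interior to $K$.

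The remaining piece, the decomposition property, will be the main (though still routine) content. Given $x,y,z \in K$ with $z \leq x+y$, I would define
\[
b_1(a) := \min\{z(a), x(a)\}, \qquad b_2(a) := z(a) - b_1(a) = \max\{0,\, z(a) - x(a)\}.
\]
Both lie in $E$ since $\min$ and $\max$ of continuous functions are continuous, and by construction $0 \leq b_1 \leq x$, $0 \leq b_2$, and $b_1 + b_2 = z$. The only non-tautological verification is $b_2 \leq y$, which I would split into cases on the sign of $z(a) - x(a)$: if $z(a) \leq x(a)$ then $b_2(a) = 0 \leq y(a)$, and if $z(a) > x(a)$ then $b_2(a) = z(a) - x(a) \leq y(a)$ by the hypothesis $z \leq x + y$. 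Since no step goes beyond pointwise manipulation and continuity of the lattice operations, no genuine obstacle is expected; the lemma is essentially a bookkeeping exercise.
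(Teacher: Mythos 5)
Your proof is correct in every part; the pointwise verifications of closedness, properness, generation via $f = f^+ - f^-$, normality, the identification $\mathrm{int}(K) = K_{>0}$ using compactness of $\A$, and the decomposition $b_1 = \min\{z,x\}$, $b_2 = \max\{0, z-x\}$ are all the standard arguments. The paper explicitly omits the proof as routine, and your write-up supplies exactly the routine details that were intended.
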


Every interior point of $K$ is quasi-interior \cite{Schaefer60}, and the converse holds if $\mathrm{int}(K) \neq 0$, which is clearly the case here by Lemma \ref{lem: cone properties}. For a proof of the following lemma, we refer the reader to \cite{SchaeferBook}.

\begin{lem} \label{lem: qi <=> int}
A point $f \in E$ is quasi-interior to $K$ if and only if $f \in \mathrm{int}(K)$. 
\end{lem}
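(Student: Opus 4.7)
The plan is to combine the explicit description $\mathrm{int}(K) = K_{>0}$ from Lemma \ref{lem: cone properties} (so $f \in \mathrm{int}(K)$ iff $f(a) > 0$ for all $a \in \A$) with two elementary facts about $E = C(\A,\R)$: compactness of $\A$ forces every strictly positive $f$ to be bounded below by some $m\mathbbm{1}$ with $m > 0$, and point evaluations $\mathrm{ev}_a \colon g \mapsto g(a)$ are continuous linear functionals. I expect both directions to be short once these ingredients are in place.

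For the forward direction, suppose $f \in E$ is quasi-interior. Note first that necessarily $f \in K$, for otherwise $f(a_0) < 0$ for some $a_0$, and then there is no $z$ with $0 \leq z(a_0) \leq f(a_0)$, so $[0,f] = \varnothing$ is not total. Now assume, for contradiction, that $f \notin K_{>0}$, so $f(a_0) = 0$ for some $a_0 \in \A$. Every $g \in [0,f]$ then satisfies $0 \leq g(a_0) \leq f(a_0) = 0$, and by linearity the whole span of $[0,f]$ lies inside $H \coloneqq \ker(\mathrm{ev}_{a_0})$. Since $\mathrm{ev}_{a_0}$ is continuous, $H$ is a closed subspace of $E$, and it is proper because $\mathbbm{1} \notin H$. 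Hence the closure of the span of $[0,f]$ is contained in $H \neq E$, contradicting totality.

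For the reverse direction, suppose $f \in K_{>0}$. By compactness of $\A$ and continuity of $f$, there exists $m > 0$ with $f \geq m\mathbbm{1}$. Given any $g \in E$, decompose $g = g^+ - g^-$ with $g^\pm \in K$ in the usual way, and pick $\lambda > 0$ small enough that $\lambda \|g^\pm\| \leq m$. Then
\[
0 \leq \lambda g^\pm \leq m\mathbbm{1} \leq f,
\]
so $\lambda g^\pm \in [0,f]$, and hence $g = \lambda^{-1}\bigl((\lambda g^+) - (\lambda g^-)\bigr)$ lies in the linear span of $[0,f]$. This actually shows the span equals all of $E$, which is stronger than density.

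I do not anticipate a serious obstacle: the work is really just choosing the right linear functional for necessity (the point evaluation at a zero of $f$) and the right comparison for sufficiency (bounding $f$ below by a multiple of $\mathbbm{1}$). What the argument also makes visible is why the equivalence might fail more generally: the forward direction works in any Banach lattice of continuous functions via point evaluations, but the reverse direction uses in an essential way that $K$ contains the interior point $\mathbbm{1}$ which is dominated by a positive scalar multiple of $f$.
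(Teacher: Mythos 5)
Your proof is correct, but it takes a different route from the paper, which does not prove this lemma at all: it simply refers the reader to \cite{SchaeferBook}, having noted just before the statement that interior points of a cone are always quasi-interior and that the converse holds in general whenever $\mathrm{int}(K) \neq \varnothing$ (which is guaranteed here by Lemma \ref{lem: cone properties}). Your argument replaces that appeal to general Banach-lattice theory with a direct computation in $E = C(\A,\R)$: the point evaluation $\mathrm{ev}_{a_0}$ at a zero of $f$ gives a closed proper subspace containing $[0,f]$, killing totality when $f \notin K_{>0}$; and compactness of $\A$ gives $f \geq m\mathbbm{1}$ with $m>0$, so that the decomposition $g = g^+ - g^-$ together with the rescaling $\lambda g^\pm \leq m\mathbbm{1} \leq f$ shows the span of $[0,f]$ is actually \emph{all} of $E$, which is stronger than density. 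Both directions are sound (including the preliminary observation that $[0,f] = \varnothing$ when $f \notin K$), and the self-contained version is arguably preferable in a paper aimed at readers not steeped in positive-operator theory. What the citation-based route buys instead is generality: the implication ``interior $\Rightarrow$ quasi-interior'' holds in any ordered Banach space, and the lattice-theoretic proof of the converse via the closed ideal generated by $f$ works uniformly across Banach lattices, whereas your forward direction leans on the continuity of point evaluations, a feature specific to $C(\A)$-type spaces.
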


\begin{coro} \label{cor: primitive <=> qi iterates}
The substitution $\sub$ is primitive if and only if, for every non-zero $f \in K$, one has that $M^p(f)$ is quasi-interior to $K$ for some $p \in \N$.
\end{coro}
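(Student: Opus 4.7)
The plan is to simply chain together the three preceding results: Proposition \ref{prop: primitive <=> st. positive iterates}, Lemma \ref{lem: cone properties}, and Lemma \ref{lem: qi <=> int}. Each of these provides exactly one of the links needed.

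First, I would recall that by Proposition \ref{prop: primitive <=> st. positive iterates}, primitivity of $\sub$ is equivalent to the statement that for every non-zero $f \in K$ there is some $p \in \N$ with $M^p(f) \in K_{>0}$. So it suffices to show that, for $g \in E$, being quasi-interior to $K$ is the same condition as being in $K_{>0}$.

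Next, by Lemma \ref{lem: cone properties} we have the identification $\mathrm{int}(K) = K_{>0}$, so $g \in K_{>0}$ is equivalent to $g \in \mathrm{int}(K)$. Then Lemma \ref{lem: qi <=> int} gives the equivalence between $g \in \mathrm{int}(K)$ and $g$ being quasi-interior to $K$. Putting these two equivalences together yields that $g$ is quasi-interior to $K$ if and only if $g \in K_{>0}$.

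Applying this to $g = M^p(f)$ and combining with Proposition \ref{prop: primitive <=> st. positive iterates}, we obtain the stated equivalence. There is no real obstacle here: the corollary is essentially a translation of Proposition \ref{prop: primitive <=> st. positive iterates} from strict positivity into the lattice-theoretic language of quasi-interior points, which will be the more natural formulation for the subsequent spectral-theoretic applications.
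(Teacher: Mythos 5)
Your proposal is correct and is exactly the argument the paper intends: the corollary is stated without proof precisely because it follows by chaining Proposition \ref{prop: primitive <=> st. positive iterates} with the identifications $K_{>0} = \mathrm{int}(K)$ (Lemma \ref{lem: cone properties}) and $\mathrm{int}(K) = \{\text{quasi-interior points}\}$ (Lemma \ref{lem: qi <=> int}). Nothing is missing.
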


We recall from \cite{Schaefer60} that an operator $M$ is \emph{strongly positive} if for each $0 \neq x \in K$, there is some $n = n(x)\in \N$ such that $M^n(x)$ is quasi-interior to $K$. Then Proposition~\ref{prop: primitive <=> st. positive iterates} may be restated as follows.

\begin{coro} \label{cor: strongly positive}
The substitution $\sub$ is primitive if and only if $M$ is strongly positive.
\end{coro}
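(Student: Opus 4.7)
The plan is to observe that this corollary is essentially immediate from unpacking the definition of strong positivity and applying the preceding corollary. First, I would recall the definition of strong positivity from \cite{Schaefer60}: the operator $M$ is strongly positive precisely when, for every non-zero $f \in K$, there exists some $n = n(f) \in \N$ such that $M^n(f)$ is quasi-interior to $K$.

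Then I would observe that this is word-for-word the condition appearing on the right-hand side of Corollary \ref{cor: primitive <=> qi iterates}, which has already been established to be equivalent to primitivity of $\sub$. So the proof is just one line: $\sub$ is primitive if and only if every non-zero $f \in K$ has some iterate $M^p(f)$ quasi-interior to $K$ (by Corollary \ref{cor: primitive <=> qi iterates}), which is if and only if $M$ is strongly positive (by definition).

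There is no real obstacle here — the work has all been done by the chain Proposition \ref{prop: primitive <=> st. positive iterates} $\to$ Lemma \ref{lem: qi <=> int} $\to$ Corollary \ref{cor: primitive <=> qi iterates}, which translates primitivity through the identification of the strictly positive cone $K_{>0}$ with $\mathrm{int}(K)$ and then with the set of quasi-interior points. The present corollary is simply the terminological recasting of that equivalence in the standard language of positive operator theory on Banach lattices, making $M$ amenable to the general machinery (e.g.\ results of \cite{Schaefer60,SchaeferBook}) that will be invoked in the subsequent sections.
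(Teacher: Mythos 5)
Your proposal is correct and matches the paper exactly: the paper presents this corollary without a separate proof, noting only that it is a restatement of Proposition \ref{prop: primitive <=> st. positive iterates} via Lemma \ref{lem: qi <=> int} and Corollary \ref{cor: primitive <=> qi iterates}, once the definition of strong positivity from \cite{Schaefer60} is unpacked. Nothing further is needed.
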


The strongly positive condition implies the weaker property of the operator $M$ (and every power \(M^n\)) being \emph{(ideal) irreducible}, meaning that, for every ideal \(I\) of \(E\) with \(M(I) \subseteq I\), we have that \(I=\{0\}\) or \(I=E\). For the notion of an ideal of a Banach lattice, we refer the reader to \cite[II.2]{SchaeferBook}, although note \cite[III.1 Exp.~1]{SchaeferBook} that for \(E = C(X)\), with \(X\) a compact Hausdorff space, we have a bijective correspondence
\[
\text{closed subsets } C \text{ of } X \longleftrightarrow \text{ideals } I_C = \{f \in E \mid f(C) = \{0\} \}.
\]
As we will see shortly, irreducibility is a powerful property for ensuring essential uniqueness and positivity of natural length functions.

\begin{definition} \label{def: irreducible sub}
Let $\sub$ be a substitution on a compact alphabet $\mathcal{A}$. We say that $\sub$ is \emph{irreducible} if it cannot be restricted to a strictly smaller closed and non-empty subset of $\A$, that is, there does not exist a non-empty closed $C\subsetneq \A$ such that $\sub(c)$ contains only letters in $C$, for all $c\in C$.
\end{definition}

\begin{prop} \label{prop: irreducible M}
A substitution $\sub$ is irreducible if and only if the substitution operator $M$ is irreducible.
\end{prop}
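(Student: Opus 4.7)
The plan is to exploit the bijective correspondence between closed subsets $C \subseteq \A$ and closed ideals $I_C = \{f \in E \mid f(C) = \{0\}\}$ of $E$ mentioned just before the statement. Under this correspondence, $C = \A$ matches $I_\A = \{0\}$ and $C = \varnothing$ matches $I_\varnothing = E$, so ``$C$ is a proper, non-empty closed subset of $\A$'' corresponds precisely to ``$I_C$ is a non-trivial closed ideal.'' It therefore suffices to show that a closed $C \subseteq \A$ satisfies $\sub(C) \subseteq C^+$ (which is precisely what it means for $\sub$ to restrict to $C$) if and only if $M(I_C) \subseteq I_C$.

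For the forward implication, assume $\sub(C) \subseteq C^+$ and take any $f \in I_C$ and $a \in C$. Every letter $b \triangleleft \sub(a)$ lies in $C$, hence $f(b) = 0$, so
\[
(Mf)(a) = \sum_{b \triangleleft \sub(a)} f(b) = 0,
\]
which shows $Mf \in I_C$. For the reverse implication I argue contrapositively: if $\sub$ does not restrict to $C$, there exist $a \in C$ and $b \triangleleft \sub(a)$ with $b \notin C$. Since $\A$ is Tychonoff (Remark~\ref{rem: tychonoff}), choose a continuous $f \colon \A \to [0,1]$ with $f(b) = 1$ and $f(C) = \{0\}$. Then $f \in I_C$, but because $f \geq 0$ and $b$ occurs in $\sub(a)$ we have $(Mf)(a) \geq f(b) = 1 > 0$; as $a \in C$, this shows $Mf \notin I_C$, so $I_C$ is not $M$-invariant.

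There is no real obstacle here: the proof is essentially a dictionary translation via the Gelfand-type correspondence cited from Schaefer. The only subtle point is the reverse direction, where positivity of $M$ and non-negativity of the test function $f$ are essential—without positivity, contributions from the other letters of $\sub(a)$ could in principle cancel $f(b)$, whereas with $f \geq 0$ no cancellation can occur. One should also note that the ideals considered in the definition of irreducibility are the closed lattice ideals, which is exactly the class that the cited correspondence parametrises; this is what keeps the translation tight in both directions.
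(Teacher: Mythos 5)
Your proof is correct and follows essentially the same route as the paper: both directions reduce, via the correspondence between closed subsets $C \subseteq \A$ and ideals $I_C$, to showing that $\sub$ restricts to $C$ if and only if $M(I_C) \subseteq I_C$, with the reverse direction handled by a Tychonoff separating function and positivity of $M$ exactly as in the paper's argument. The only difference is presentational — you state the dictionary between trivial ideals and trivial closed sets explicitly up front, which the paper leaves implicit.
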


\begin{proof}
Suppose that $\sub(c) \subseteq C^+$ for all \(c \in C\), for some closed subset \(C \subseteq \A\). Let \(f \in I_C\), so that \(f(c) = 0\) for all \(c \in C\). Then \((Mf)(c) = f(c_1) + \cdots + f(c_n) = 0\), where \(c_1 \cdots c_n = \sub(c) \in C^+\) for \(c \in C\). Thus \(M(I_C) \subseteq I_C\). If \(M\) is irreducible, we must have that \(I_C = \{0\}\) or \(E\), so that \(C = \A\) or \(\varnothing\), as required.

Conversely, suppose that $M(I_C) \subseteq I_C\). We claim that \(\sub\) may be restricted to \(C\). Indeed, if we have $b \triangleleft M^n(c)$, for $c \in C$ but $b \in \A \setminus C$, then consider a function \(f \colon \A \to [0,1]\) with \(f(c) = 0\) for all $c \in C$ and \(f(b) = 1\) (Remark \ref{rem: tychonoff}). Then $f \in I_C$ but $M(f) \notin I_C$, since \((Mf)(c) = f(c_1) + \cdots + f(c_n) \geq 1\), where \(c_1 \cdots c_n = \sub(c)\) contains a copy of \(b\). It follows that \(\sub\) restricts to \(C\). If \(\sub\) is irreducible, it then follows that \(I_C = \{0\}\) or \(E\), as required.
\end{proof}

\begin{remark}
If \(\sub\) is primitive, then clearly \(\sub\) is irreducible. Likewise, since powers of primitive substitutions are primitive, we see that each \(\sub^k\) is irreducible. Note that Proposition \ref{prop: irreducible M} is consistent with the finite alphabet setting where a substitution is sometimes called irreducible if for every pair of letters $a,b\in\A$, there exists a power $k$ such that $b$ is in $\sub^{k}(a)$, which is equivalent to the substitution matrix being irreducible. 
\end{remark}

\begin{prop} \label{prop: reduction to irreducible}
Every substitution \(\sub\) restricts to an irreducible substitution on a non-empty, closed sub-alphabet.
\end{prop}

\begin{proof}
This is analogous the the proof that every dynamical system has a minimal subsystem. Consider the family \(\Pi\) of non-empty closed sub-alphabets \(C \subseteq \A\), which are also closed under substitution. Then \(\Pi \neq \varnothing\) (since \(\A \in \Pi\)) and partially ordered under set inclusion. Moreover, every chain \((C_i)\) has a lower bound, namely \(C = \bigcap C_i\). Indeed, \(C\) is closed and non-empty by Cantor's intersection theorem. If \(c \in C\) then \(c\) belongs to each \(C_i\); since these are each closed under substitution, it must hold that \(\sub(c) = c_1 \cdots c_n\) with each \(c_j\) belonging to every \(C_i\) and thus to \(C\), so \(C\) is also closed under substitution. Hence, \(\Pi\) has a minimal element \( C \subseteq \A\) by Zorn's Lemma. But this means that \(C\) is a closed alphabet and \(\sub\) is irreducible over \(C\), as required.
\end{proof}

\subsection{The spectrum of \texorpdfstring{$M$}{}}
The spectrum of the operator $M$ is the set
\[
\sigma(M)=\left\{\lambda\in\C\mid \lambda\mathbb{I}-M \text{ is not invertible}\right\}.
\]
We call $R(\lambda)=(\lambda\mathbb{I}-M)^{-1}$ the resolvent operator, which is a holomorphic (operator-valued) function on $\C\setminus \sigma(M)$. We call $\lambda_0\in\sigma(M)$ a \emph{pole of the resolvent} (of order $p \in \N$) if there exists a punctured neighbourhood of $\lambda_0$ such that the  coefficient corresponding to $-p$ of the Laurent expansion of $R(\lambda)$ is non-zero and all other coefficients for $n<-p$ are zero \cite[Sec.~8]{Taylor-FA}. A pole is called simple if it is of order $p=1$. It is often very useful to know that \(\lambda \in \sigma(M)\) is a pole of the resolvent; in particular, this implies that \(\lambda\) is an eigenvalue (see, for instance \cite[Thm.~5.8-A]{Taylor-FA}):

\begin{lem} \label{lem: pole => eigenvalue}
If \(\lambda \in \sigma(M)\) is a pole of the resolvent then \(\lambda\) is isolated in \(\sigma(M)\) and is an eigenvalue of \(M\).
\end{lem}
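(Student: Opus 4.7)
The plan is to exploit the Laurent expansion of the resolvent around $\lambda_0$ directly. By hypothesis we can write
\[
R(\lambda) = \sum_{n=-p}^{\infty} A_n (\lambda-\lambda_0)^n
\]
on some punctured disc $0 < |\lambda-\lambda_0| < \varepsilon$, with $A_{-p} \neq 0$ and all coefficients below $-p$ vanishing. Each $A_n$ is a bounded operator on $E$, obtained from the usual Cauchy integral formulas applied to $R(\lambda)$ around $\lambda_0$.

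First I would handle isolation: because the series above converges to a well-defined bounded operator on the punctured disc, the resolvent exists on $\{0 < |\lambda-\lambda_0| < \varepsilon\}$, so this entire punctured disc lies in the resolvent set $\C \setminus \sigma(M)$. Hence $\lambda_0$ is an isolated point of $\sigma(M)$.

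Next, the eigenvalue claim. I would insert the Laurent expansion into the identities $(\lambda\mathbb{I}-M)R(\lambda) = \mathbb{I} = R(\lambda)(\lambda\mathbb{I}-M)$ and match powers of $(\lambda-\lambda_0)$. Writing $\lambda\mathbb{I}-M = (\lambda_0\mathbb{I}-M) + (\lambda-\lambda_0)\mathbb{I}$, the coefficient of $(\lambda-\lambda_0)^{-p}$ on the left-hand side yields the relation
\[
(\lambda_0 \mathbb{I}-M)A_{-p} = 0 = A_{-p}(\lambda_0\mathbb{I}-M),
\]
since the coefficient of $(\lambda-\lambda_0)^{-p-1}$ in the Laurent series of $R(\lambda)$ is zero. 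Equivalently, $\mathrm{ran}(A_{-p}) \subseteq \ker(\lambda_0\mathbb{I}-M)$. Because $A_{-p} \neq 0$, there is some $f \in E$ with $A_{-p}f \neq 0$, and this $A_{-p}f$ is then a nonzero vector in $\ker(\lambda_0\mathbb{I}-M)$, which is to say an eigenvector of $M$ with eigenvalue $\lambda_0$.

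The only mildly subtle point is justifying the coefficient-matching step — one needs to be a little careful in verifying that the Laurent expansion may be multiplied term-by-term by $(\lambda-\lambda_0)\mathbb{I}$ and $(\lambda_0\mathbb{I}-M)$, and that uniqueness of Laurent coefficients for operator-valued holomorphic functions lets us read off the recursions. This is standard for Banach-space-valued holomorphic functions (the Laurent series converges in operator norm on its annulus of convergence, so termwise multiplication by a bounded operator is legitimate), so the argument is really just bookkeeping once the expansion is in hand. A citation to \cite[Sec.~8]{Taylor-FA} covers the machinery.
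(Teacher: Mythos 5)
Your argument is correct and is exactly the standard proof of this fact; the paper itself does not prove the lemma but simply cites \cite[Thm.~5.8-A]{Taylor-FA}, and your coefficient-matching in the identities $(\lambda\mathbb{I}-M)R(\lambda)=\mathbb{I}=R(\lambda)(\lambda\mathbb{I}-M)$, giving $(\lambda_0\mathbb{I}-M)A_{-p}=0$ with $A_{-p}\neq 0$, is precisely the argument behind that reference. The isolation claim is, as you note, immediate from the hypothesis that the Laurent expansion converges on a punctured neighbourhood, so nothing further is needed.
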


For the rest of the paper, we let $r = r(M)$ denote the \emph{spectral radius} of $M$, defined as
$r = \sup\{|\lambda| \mid \lambda \in \sigma(M)\}$. This is the natural generalisation of the Perron--Frobenius leading eigenvalue in the finite alphabet setting.  Gelfand's formula gives us that
\begin{equation} \label{eq: gelfand}
r = \lim_{n \to \infty} \sqrt[n]{\|M^n\|}\ \text{ and }\ r \leq \sqrt[n]{\|M^n\|} \ \text{ for all } n \in \N.
\end{equation}

\begin{lem} \label{lem: spectral bounds}
We have the following for all $n \in \N$:
\[
\min_{a \in \A} |\sub^n(a)| \leq r^n \leq \max_{b \in \A}|\sub^n(b)|.
\]
\end{lem}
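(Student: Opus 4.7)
The plan is to use the formula for the operator norm of $M^n$ from Corollary \ref{coro: operator norm}, together with Gelfand's spectral radius formula, and to exploit the positivity of $M$ applied to the constant function $\mathbbm{1}$.

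For the upper bound $r^n \leq \max_{b \in \A} |\sub^n(b)|$, I would apply Gelfand's formula in the form $r^n \leq \|M^n\|$ from \eqref{eq: gelfand}, and then invoke Corollary \ref{coro: operator norm} which identifies $\|M^n\|$ with $|\sub^n| = \max_{b \in \A} |\sub^n(b)|$. This step is essentially a one-liner.

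For the lower bound $\min_{a \in \A} |\sub^n(a)| \leq r^n$, set $m_n \coloneqq \min_{a \in \A} |\sub^n(a)|$, which is attained by compactness of $\A$ and continuity of $|\sub^n|$. By Lemma \ref{lem: powers of operator} we have $(M^n \mathbbm{1})(a) = |\sub^n(a)| \geq m_n$ for all $a$, hence $M^n \mathbbm{1} \geq m_n \mathbbm{1}$ in the order of $E$. Applying $M^n$ to this inequality and using positivity and linearity of $M$ (so $M$ preserves $\leq$) gives $M^{2n}\mathbbm{1} \geq m_n M^n \mathbbm{1} \geq m_n^2 \mathbbm{1}$, and iterating yields $M^{kn}\mathbbm{1} \geq m_n^k \mathbbm{1}$ for all $k \in \N$. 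Taking norms and again using Corollary \ref{coro: operator norm}, this gives $\|M^{kn}\| = \|M^{kn} \mathbbm{1}\| \geq m_n^k$. Applying Gelfand's formula to the subsequence $\{kn\}_{k \in \N}$,
\[
r = \lim_{k \to \infty} \|M^{kn}\|^{1/(kn)} \geq \lim_{k \to \infty} (m_n^k)^{1/(kn)} = m_n^{1/n},
\]
so $r^n \geq m_n$, as required.

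I do not expect any serious obstacle here: the argument is standard once one recognises that the constant function $\mathbbm{1}$ realises the operator norm of every $M^n$ (which is the content of Corollary \ref{coro: operator norm}). The only mild subtlety is ensuring the minimum $m_n$ is attained, which follows from compactness of $\A$ and continuity of $a \mapsto |\sub^n(a)|$, already established in the proof of Proposition \ref{PROP:const-length}.
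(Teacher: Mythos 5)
Your proof is correct and follows essentially the same route as the paper: the upper bound via Gelfand's formula and Corollary \ref{coro: operator norm}, and the lower bound by establishing $\|M^{kn}\| \geq \bigl(\min_{a}|\sub^n(a)|\bigr)^k$ and applying Gelfand along the subsequence $\{kn\}$. The only (cosmetic) difference is that you run the induction through order-preservation of the positive operator $M^n$ applied to $\mathbbm{1}$, whereas the paper counts letters in level-$kn$ supertiles directly; these are the same estimate in two languages.
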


\begin{proof}
The upper bound follows from Equation~\eqref{eq: gelfand} and Corollary \ref{coro: operator norm}. For the lower bound, suppose that $c \coloneqq \min_{a \in \A} |\sub^n(a)|$. We claim that $c^k \leq \|M^{kn}\|$ for all $k \in \N$, which is to say that each $kn$-superword contains at least $c^k$ letters. By definition this holds for $k=1$. Suppose it holds for some given $k$. We have $|\sub^{(k+1)n}(a)| = |\sub^n(\sub^{kn}(a))|$ for all $a \in \A$ so, since $\sub^{kn}$ contains at least $c^k$ letters, each of which substitutes under $\sub^n$ to at least $c$ letters (by the definition of $c$), we obtain the claimed upper bound for all $k$ by induction. Applying Equation \ref{eq: gelfand}:
\[
r = \lim_{k \to \infty} \sqrt[k]{\|M^k\|} = \lim_{k \to \infty} \sqrt[kn]{\|M^{kn}\|} \geq \sqrt[kn]{c^k} = \sqrt[n]{c}.
\]
\end{proof}

Note, in particular, that $r \geq 1$ for any substitution. The \emph{peripheral spectrum} of $M$ is the subset of the spectrum of maximum modulus:
\[
\sigma_\mathrm{per}(M) = \{\lambda \in \sigma(M) \mid |\lambda| = r\}.
\]
We say that $\lambda \in \sigma(M)$ is in the \emph{point spectrum} if $\lambda$ is an eigenvalue of $M$ and let $\sigma^\mathrm{p}_\mathrm{per}(M)$ denote the set of elements in the peripheral point spectrum. Since $M$ is a positive operator, the spectral radius $r$ will always belong to the peripheral spectrum \cite[Prop.~1]{Schaefer60}, although it is not always in the peripheral point spectrum (Examples \ref{exp: non-growing, no length} and \ref{exp: growing, no length}). For the rest of the paper, we let $T$ denote the operator 
\[
T \coloneqq M/r. 
\]
Likewise, its dual operator is denoted by $T'=M'/r\colon E^{\prime}\to E^{\prime}$. Such normalisation is typical in operator theory, as many spectral and convergence results require $r(T)=1$.

\begin{definition}
We call $\phi \in E'$ an \emph{eigenmeasure} if $T' \phi = \phi$ with $\phi \in K'$ and $\|\phi\| = 1$. 
\end{definition}

Note that eigenmeasures always exist (see Remark \ref{rem: eigenmeasures exist}).

\subsection{Uniqueness and consequences of a natural length function}

Natural length functions will be of most use when strictly positive. These always have as inflation factor the spectral radius:

\begin{prop} \label{prop: strictly positive nlf => lambda = r}
For any substitution \(\sub\), if \(\ell \in K_{>0}\) is a strictly positive eigenvector of \(M\) then its eigenvalue is the spectral radius \(\lambda = r \geq 1\). 
\end{prop}

\begin{proof}
Since \(\ell > 0\), by compactness there are \(c\), \(C > 0\) with \(c \bbo \leq \ell \leq C \bbo\). Iteratively applying the substitution operator, \(cM^n(\bbo) \leq \lambda^n \ell \leq CM^n(\bbo)\) for all \(n \in \N\). Taking norms and applying Corollary \ref{coro: operator norm},
\[
c\|M^n\| \leq \lambda^n \|\ell\| \leq C\|M^n\|, \text{ thus } c^{1/n} \sqrt[n]{\|M^n\|} \leq \lambda \|\ell\|^{1/n} \leq C^{1/n}\sqrt[n]{\|M^n\|}.
\]
Since \(c\), \(C\) and \(\|\ell\| > 0\), the lower and upper bounds have limits equal to \(r\) by Gelfand's formula and \(\lambda = r\).
\end{proof}

We now turn to irreducible substitutions, where we see that natural length functions must be strictly positive and essentially unique:

\begin{theorem} \label{thm: irreducible => unique nlf}
Suppose that \(\sub\) is irreducible and has a natural length function \(\ell \in K\). Then \(\ell \in K_{>0}\), has inflation factor \(\lambda = r\) the spectral radius and every other natural length function is a scalar multiple of \(\ell\). Moreover, \(\ell\) is the only eigenvector of \(M\) with eigenvalue \(r\), up to scalar multiplication.
\end{theorem}

\begin{proof}
First we prove that every positive eigenvector of \(M\), for an irreducible substitution, is strictly positive. Indeed, let \(C \coloneqq \{a \in \A \mid \ell(a) = 0\}\). Then \(C\) is closed, by continuity of \(\ell\). Moreover, for \(c \in C\) with \(\sub(c) = c_1 c_2 \cdots c_n\),
\[
0 = \lambda \ell(c) = \ell(c_1) + \ell(c_2) + \cdots + \ell(c_n),
\]
so that each \(\ell(c_i) = 0\), as \(\ell \in K\). It follows that \(\sub\) maps letters of \(C\) into words over \(C\). Since \(\ell \neq 0\), we have that \(C \neq \A\) and thus \(C = \varnothing\), by irreducibility of \(\sub\), so that \(\ell(a) > 0\) for all \(a \in \A\). 

Then every natural length function for \(\sub\) is strictly positive and thus, by Proposition \ref{prop: strictly positive nlf => lambda = r}, has inflation factor \(\lambda = r\). Suppose that \(\ell\) and \(\ell'\) are two such eigenvectors of \(M\). By strict positivity, compactness and scaling \(\ell\) if necessary, we may assume without loss of generality that \(\ell' \leq \ell\). We define
\[
c \coloneqq \sup \left\{\xi \geq 0 \mid \ell - \xi \cdot \ell' \geq 0\right\}.
\]
Again by compactness and continuity, we have that \(\ell'' \coloneqq \ell - c \cdot \ell' \in K\) and \(\ell''(a) = 0\) for some \(a \in \A\). We also have \(M\ell'' = r \ell''\). Since, by the above, all eigenvectors are strictly positive, we must have that \(\ell'' = 0\), that is \(\ell = c \ell'\), as required.

Finally, suppose that \(\ell'\) is another (not necessarily positive) eigenvector of \(M\), with eigenvalue \(r\). For sufficiently large \(c > 0\) we have that \(\ell' + c\ell\) is strictly positive. Since this is still an eigenvector with eigenvalue \(r\), by the above \(\ell' + c\ell = C\ell\) for some constant \(C\). But then \(\ell' = (C-c)\ell\), as required.
\end{proof}

As well as essential uniqueness and strict positivity of natural length functions for irreducible substitutions, we are also guaranteed that their inflation factors have \(\lambda = r > 1\), unless the substitution is non-growing for trivial reasons:

\begin{prop} \label{prop: irreducible => growing}
Let \(\sub\) be an irreducible substitution and suppose that \(|\sub(b)| \geq 2\) for some \(b \in \A\). Then, for some \(n \in \N\), we have that \(|\sub^n(a)| \geq 2\) for all \(a \in \A\) and, in particular, \(r > 1\).
\end{prop}

\begin{proof}
We first show that, for all \(a \in \A\), we have \(|\sub^n(a)| \geq 2\) for some \(n \in \N\) (but perhaps with \(n\) depending on \(a\)). If not, then we have a sequence \((a_n)_n\) in \(\A\), with \(a_1 = a\) and \(a_{n+1} = \sub(a_n)\). Consider the closure \(C\) of the set \(\{a_n\}_{n \in \N}\). Since \(|\sub(a_n)|=1\) for any \(n\), the same holds for all \(c \in C\), by continuity. By construction \(C \neq \varnothing\) and because \(|\sub(b)| \geq 2\) we have that \(C \neq \A\). Finally, again by continuity, \(\sub\) maps letters of \(C\) to other letters in \(C\), contradicting irreducibility.

Let \(X_n = \{a \in \A \mid |\sub^n(a)| = 1\}\). By continuity of \(\sub^n\), each \(X_n\) is closed, and clearly \(X_{n+1} \subseteq X_n\). Suppose that \(X_n \neq \varnothing\) for all \(n \in \N\). Then by Cantor's intersection theorem, there exists some \(a \in \A\) that is a member of each \(X_n\). But this contradicts \(|\sub^n(a)| \geq 2\) for all \(a \in \A\), for sufficiently large \(n\) depending on \(a\). Thus, \(X_n = \varnothing\) for some \(n\), so that \(|\sub^n(a)| \geq 2\) for all \(a \in \A\) and \(n\) not depending on \(a\). By Lemma \ref{lem: spectral bounds} we have that \(r > 1\).
\end{proof}

\begin{remark} \label{rem: one letter growing}
Henceforth, we will always assume that our substitutions satisfy \(|\sub(a)| \geq 2\) for some \(a \in \A\), and hence \(r > 1\) for any irreducible substitution.
\end{remark}

Next we see that existence of a natural length function ensures certain bounds for letter growth of superwords, and occurrences of letters in superwords belonging to open subsets:

\begin{lem} \label{lem: supertile lengths}
Suppose that \(\sub\) is irreducible and admits some natural length function \(\ell\) (necessarily with inflation constant \(\lambda = r > 1\)). There are constants $\alpha$, $\beta>0$ so that, for all $a \in \A$,
\[
\alpha \lambda^n \leq |\sub^n(a)| \leq \beta \lambda^n.
\]
If \(\sub\) is primitive, then for any open subset $U \subset \A$, there exist $\alpha(U)$, $\beta(U) > 0$ and $p \in \N$ so that, for all $a \in \A$,
\begin{equation} \label{eq: bounds for appearances of letters}
\alpha(U) \lambda^n \leq \#\left\{b \triangleleft \sub^n(a) \mid b \in U\right\} \leq \beta(U) \lambda^n \ \text{ for all } n > p.
\end{equation}
\end{lem}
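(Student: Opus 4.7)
My plan is to handle the two claims in order, first establishing the bounds on $|\sub^n(a)|$ and then leveraging them to deduce the $U$-count bounds. Primitivity plus $\ell \in K \setminus \{0\}$ forces $\ell > 0$ pointwise: by Proposition \ref{prop: primitive <=> st. positive iterates}, some iterate $M^p \ell = \lambda^p \ell$ is strictly positive, and since $\ell \neq 0$ we must have $\lambda > 0$, so $\ell$ itself is strictly positive on $\A$. By compactness of $\A$ and continuity of $\ell$, set $m \coloneqq \min_{a \in \A} \ell(a) > 0$ and $M_\ell \coloneqq \max_{a \in \A} \ell(a) < \infty$. Applying Lemma \ref{lem: powers of operator} to $\ell$ gives
\[
\lambda^n \ell(a) = (M^n \ell)(a) = \sum_{b \triangleleft \sub^n(a)} \ell(b),
\]
and sandwiching $\ell(b) \in [m, M_\ell]$ term by term, together with the analogous bound on $\ell(a)$, immediately yields
\[
\frac{m}{M_\ell}\, \lambda^n \leq |\sub^n(a)| \leq \frac{M_\ell}{m}\, \lambda^n,
\]
so $\alpha = m/M_\ell$ and $\beta = M_\ell/m$ work; that $\lambda > 1$ then follows from Proposition \ref{PROP:prim-growing}, which forces $|\sub^n(a)| \to \infty$.

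For the more general statement, the upper bound is immediate: every letter counted on the left is also a letter of $\sub^n(a)$, so $\#\{b \triangleleft \sub^n(a) \mid b \in U\} \leq |\sub^n(a)| \leq \beta \lambda^n$ and we may take $\beta(U) = \beta$.

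The lower bound is the substantive part, and I would combine primitivity with a continuous bump function supported in $U$. Since $\A$ is Tychonoff (Remark \ref{rem: tychonoff}), choose any $a_0 \in U$ and a continuous $f \colon \A \to [0,1]$ with $f(a_0) = 1$ and $f \equiv 0$ on $\A \setminus U$. Then $f \in K \setminus \{0\}$, and because $0 \leq f \leq 1$ with $f$ vanishing off $U$,
\[
(M^n f)(a) = \sum_{b \triangleleft \sub^n(a)} f(b) \leq \#\{b \triangleleft \sub^n(a) \mid b \in U\}.
\]
By Proposition \ref{prop: primitive <=> st. positive iterates}, primitivity furnishes some $p \in \N$ with $M^p f$ strictly positive; setting $\delta \coloneqq \min_{a \in \A}(M^p f)(a) > 0$ gives $M^p f \geq \delta \bbo$ pointwise. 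For $n > p$, positivity (monotonicity) of $M$ and the first part of the lemma yield
\[
(M^n f)(a) = (M^{n-p}(M^p f))(a) \geq \delta\,(M^{n-p}\bbo)(a) = \delta\,|\sub^{n-p}(a)| \geq \delta \alpha \lambda^{n-p},
\]
so $\alpha(U) \coloneqq \delta \alpha \lambda^{-p}$ completes the lower bound. The crux, and essentially the only place primitivity is used non-trivially, is producing the strictly positive iterate $M^p f$, which bootstraps the $U$-count lower bound to the same exponential rate $\lambda^n$ as the supertile length.
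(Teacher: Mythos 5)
Your proof is correct and follows essentially the same route as the paper's: sandwich $\ell$ between positive constants by compactness to get the bounds on $|\sub^n(a)|$, then use a bump function supported in $U$ together with Proposition \ref{prop: primitive <=> st. positive iterates} to produce a strictly positive iterate $M^p f \geq \delta\bbo$ and bootstrap the lower bound (your derivation of $\ell>0$ directly from the eigenvector equation, rather than via Theorem \ref{thm: unique length function}, is a harmless and slightly more self-contained variation). The only point you leave implicit is the parenthetical claim $\lambda = r$, which follows immediately by combining your bounds $\alpha\lambda^n \leq |\sub^n(a)| \leq \beta\lambda^n$ with Lemma \ref{lem: spectral bounds} and taking $n$th roots.
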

Note that $\#\left\{b \triangleleft \sub^n(a) \mid b \in U\right\}$ counts the occurrences of $b$ in $\sub^n(a)$ from the set $U$ with multiplicities.
\begin{proof}
By Theorem \ref{thm: irreducible => unique nlf}, we have that $\ell > 0$ has inflation constant $\lambda = r$. We have that $r > 1$ by Proposition \ref{prop: irreducible => growing} (and Remark \ref{rem: one letter growing}).

By compactness we have constants $c$, $C > 0$ so that $c\bbo \leq \ell \leq C\bbo$. Thus,
\[
c |\sub^n(a)| = M^n(c \cdot \bbo)(a) \leq M^n(\ell)(a) = \lambda^n \ell(a) \leq \lambda^n C, \ \mathrm{ hence } \ |\sub^n(a)| \leq \left(\frac{C}{c}\right) \lambda^n.
\]
Analogously we have an upper bound of $\left(\frac{c}{C}\right) \lambda^n \leq |\sub^n(a)|$, as required.

Now suppose that \(\sub\) is primitive and $U \subset \A$ is open. There exists a function $f \in E$ with $f \geq 0$, $\|f\| = 1$ and $f(b) = 0$ for all $b \notin U$ (Remark \ref{rem: tychonoff}). By Proposition \ref{prop: primitive <=> st. positive iterates}, $h \coloneqq M^p(f) \in K_{>0}$ for some $p \in \N$. So, by compactness, $h \geq \kappa \bbo$ for some constant $\kappa > 0$. Hence, for all $a \in \A$,
\[
\#\{b \triangleleft \sub^{n+p}(a) \mid b \in U\} \geq (M^{n+p}(f))(a) = (M^n(h))(a) \geq \kappa M^n(\bbo)(a) \geq \kappa \alpha \lambda^n,
\]
where the first inequality again follows from Lemma \ref{lem: powers of operator}. The lower bound of Equation~\eqref{eq: bounds for appearances of letters} follows, whilst the upper bound follows trivially from that for $|\sub^n(a)|$.
\end{proof}

Since $\|T^n\|=\frac{1}{r^n}\|M^n\|$, we get the following immediate consequence.

\begin{coro}
Let $\sub$ be irreducible and suppose it admits a natural length function. Then the operator $T$ is power bounded, i.e., for some $\beta > 0$, $\|T^n\|\leqslant\beta$  for all $n\in\mathbb{N}$.
\end{coro}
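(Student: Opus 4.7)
The plan is to chain together two results that have already been established in the excerpt: Corollary \ref{coro: operator norm}, which identifies the operator norm of $M^n$ with the maximum supertile length $|\sub^n|$, and the upper bound in Lemma \ref{lem: supertile lengths}, which controls supertile lengths geometrically in the inflation factor.

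First I would recall that, by Corollary \ref{coro: operator norm},
\[
\|M^n\| = |\sub^n| = \max_{a \in \A} |\sub^n(a)|.
\]
Next, since $\sub$ is primitive and admits a natural length function with stretching factor $\lambda = r$, the upper estimate in Lemma \ref{lem: supertile lengths} gives a constant $\beta > 0$ with $|\sub^n(a)| \leq \beta \lambda^n = \beta r^n$ for every $a \in \A$ and every $n \in \N$. Combining these bounds yields
\[
\|T^n\| = \frac{\|M^n\|}{r^n} = \frac{|\sub^n|}{r^n} \leq \beta,
\]
which is exactly the claim.

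There is no real obstacle here: the corollary is essentially a one-line consequence of the two results cited, the only point to verify being that $r > 1$ (so that normalisation by $r^n$ makes sense and so that the length function hypothesis is consistent with Theorem \ref{thm: unique length function}), and this was already observed inside the proof of Lemma \ref{lem: supertile lengths} via the primitivity-driven growth $|\sub^n(a)| \to \infty$ together with the lower bound in Lemma \ref{lem: spectral bounds}. Thus the argument reduces to quoting the two previous results and dividing.
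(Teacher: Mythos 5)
Your argument is correct and is precisely the route the paper intends: the corollary is stated as an ``immediate consequence'' of Lemma \ref{lem: supertile lengths}, whose upper bound $|\sub^n(a)| \leq \beta \lambda^n$ combined with $\|M^n\| = |\sub^n|$ from Corollary \ref{coro: operator norm} gives $\|T^n\| = \|M^n\|/r^n \leq \beta$. Nothing is missing.
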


Note that the examples in Ex.~\ref{exp: non-growing, no length} and \ref{exp: growing, no length} that both do not admit a length function are both reducible
(with $C=\{(0,0)\}$ as the closed subalphabet). 

\begin{remark}
Whilst we are not always assured the existence of a natural length function, there is always at least a system of approximate natural length functions. For any substitution $\sub$, there exists a sequence $(x_n)_{n \in \N}$ of $x_n \in K$, with each \(\|x_n\| = 1\), for which $(T-rI)x_n \to 0$ uniformly as $n \to \infty$ (i.e., $\|(T-rI)x_n\| \to 0$ as $n \to \infty$). This result follows from \cite[Prop.~4.1.1]{Nieberg-BL} (see, for instance, the proof of \cite[Prop.~1]{Deng-Du}, which also applies to Banach lattices).
\end{remark}

\section{Existence of natural length functions and unique ergodicity}\label{SEC:invariant}
In this section we will find conditions that assure the existence of a natural length function. One can associate to this a \emph{geometric inflation} rule $\sub$ and build the corresponding tiling dynamical system. 

To motivate this, assume that $\sub$ admits a natural length function $\ell > 0$. We then build the corresponding \emph{geometric hull} $\Omega$ (also known as the tiling space) and equip it with the continuous translation action by $\R$. The resulting geometric self-similarity is a useful structure. For instance, one may then apply the general machinery of fusion tilings of Frank and Sadun \cite{PFS:fusion-ILC} to relate translation-invariant measures on $(\Omega,\R)$ to sequences of measures on the alphabet $\A$ that are compatible with the dual operator $T'$. These can then be related (see Theorem \ref{thm:inv measures - tiling}) to shift-invariant measures on $X_\sub$ by viewing $(\Omega,\R)$ as a suspension flow. In this way, we may find conditions for unique ergodicity in terms of the substitution operator. This will then be applied in Section \ref{SEC:applications} to also find sufficient conditions in terms of the substitution.

For completeness, we recall some of the notions above regarding the geometric realisation of tiling spaces from a subshift and length function; compare \cite[Sec.~5]{BG:book}:

\begin{definition}\label{def: geometric hulls}
Let \((X,\sigma)$ be a subshift over a compact alphabet $\A$ and \(\ell \colon \A \to \R_{>0}\) be continuous. This defines \emph{prototiles} \((I_a,a)\), given by intervals \(I_a \coloneqq [0,\ell(a)]\) also carrying the label \(a \in \A\). A \emph{tile} is a translate \((I_a + x,a)\) of a prototile. A \emph{tiling} \(\mc T = \{\mathfrak{t}_i \mid i \in \Z\}\) is a set of tiles \(\mathfrak{t}_i = (I_{a_i}+x_i,a_i))\) whose supports cover \(\R\), where distinct tiles intersect at most on their boundaries and, given that the tiles are indexed in order (that is, each \(\mathfrak{t}_i\) is to the immediate left of \(\mathfrak{t}_{i+1}\)), the associated bi-infinite sequence \(\cdots a_{-1}|a_0 a_1 \cdots \in \A^\Z\) of labels of tiles is an element of \(X\) (by shift-invariance, the choice of indexing giving the central letter is inconsequential). The \emph{geometric hull} (or \emph{tiling space}) is the set of all such tilings, which carries a natural (compact, Hausdorff) topology which, loosely speaking, considers two tilings to be `close' if they have patches covering large intervals about the origin whose tiles may be paired to be close in support and with close labels in \(\A\). More explicitly, there is a natural identification \(\Omega \cong Y \coloneqq (X_\rho \times \R) /\sim\) with the equivalence relation generated by \((x,\ell(x_0))\sim (\sigma(x),0)\); elements of the form \([(x,0)]\) are those tilings \(\mc T\) with (proto)tile \((I(x_0),x_0)\) containing the origin on its left endpoint and the other tiles following the sequence \(x\) in the obvious way. Any other \([(x,t)]\) is identified with the shift \(\mc T-t\) of this tiling. We have a continuous \(\R\)-action on \(\Omega\), by translating tilings, and the above identification defines a topological conjugacy with \(Y\), with action \(t \cdot [(x,s)] = [(x,s-t)]\). We call \((Y,\R)\) the \emph{suspension flow} over \((X_{\sub},\sigma)\) with continuous \emph{roof function} \(f(x) \coloneqq \ell(x_0)\). 
\end{definition}

\begin{remark}
Note the importance of \(\ell\) being continuous in this definition. If \(\ell\) were not continuous, the identification \(Y \to \Omega\) would not be continuous, for instance. One might imagine elements \(x\), \(y \in X\) whose tiles are all pairwise close in \(\A\) but so that the tilings associated to \([(x,0)],\) \([(y,0)] \in Y\) have tiles with close labels \(x_0\) and \(y_0\) yet very different supports.
\end{remark}

\begin{remark}
In this more general setting, the \emph{transversal} $\Xi$ of $\Omega$ consists precisely of the points identified with elements of the form $[(x,0)]$ in $Y$. This is unambiguous as the length function $\ell$ is \emph{uniformly bounded} away from 0.
Unlike in the FLC setting, $\Xi$ need not be totally disconnected. 
\end{remark}

\subsection{Operator convergence, length functions, and eigenmeasures}
In this section, we consider convergence notions for $T=M/r$, and the consequences for the length function and $T^{'}$-eigenmeasures. As already noted in Remark \ref{rem: regular measures}, all measures will be assumed to be regular and we identify them with elements of $K'$ by the Riesz--Markov--Kakutani representation theorem.

\begin{remark}
In the finite alphabet case, the measures $\mu$ here are in correspondence with row vectors, with one coordinate for each element of $\A$, which `integrate' a weighted sum of letters by the inner product. Then $\mu(M f) = \lambda \mu(f)$ for all \(f \in E\) (that is, \(\mu T = \mu\)) just means that $\mu$ is a left eigenvector of the substitution operator $M$ (i.e., a right eigenvector of the substitution matrix). In the uniquely ergodic case, the entries of this row vector may be considered as the relative frequencies of each tile.
\end{remark}

\begin{definition} \label{def: mean ergodic}
Given a bounded linear operator $T \colon E \to E$ with $r(T)=1$ we define its $n$th Ces\`{a}ro mean as
\[
A_n(f) = \frac{1}{n} \sum_{i=0}^{n-1} T^i(f) .
\]
We call \(T\) \emph{Ces\`{a}ro bounded} if \(\|A_n\|\) is bounded. We call $T$ \emph{mean ergodic} if
\[
P(f) \coloneqq \lim_{n \to \infty} (A_n T)(f) \in E
\]
exists for every $f \in E$. We say that $T$ is \emph{uniformly ergodic} if the above sequence of operators converges in the uniform operator topology \cite{Eisner}. That is, for every $\epsilon > 0$ and for sufficiently large \(n \in \N\), we have $\|A_n(f)-P(f)\| < \epsilon$ for every $f \in E$ with $\|f\| \leq 1$; equivalently\footnote{By the Uniform Boundedness Principle, $P$ is also a bounded operator on $E$.}, $A_n \to P$ in the Banach space of bounded operators on $E$.

Similarly, we say that $T$ is \emph{strongly power convergent} if $P(f) \coloneqq \lim_{n\to\infty} T^n(f)$ exists for all $f\in E$. We call $T$ \emph{uniformly power convergent} \cite{K:power-conv} if $T^n$ converges (necessarily to $P$) in the uniform operator topology.
\end{definition}

\begin{remark} \label{rem: power convergent => mean ergodic}
It is not hard to see that if \(T\) is uniformly (resp.\ strongly) power convergent then it is power bounded and uniformly (resp.\ mean) ergodic. Similarly, \(P \colon E \to E\) is clearly a projection to the subspace of \(E\) fixed by \(T\). Note that convergence properties extend those of primitive non-negative matrices on finite-dimensional vector spaces, i.e., \((M/r)^n\to P\) where \(r\) is the Perron--Frobenius eigenvalue; see \cite{Queffelec}. In the finite-dimensional case, any power bounded operator (thus, any operator with spectral radius \(1\)) is uniformly ergodic.
\end{remark}

Obviously constant length substitutions always admit natural length functions, taking \(\ell = \bbo\). The theorem below shows that in many cases of interest (such as for primitive substitutions), natural length functions exist when the peripheral point spectrum is non-empty. Throughout, we will say that \(f \in E\) is \emph{essentially unique with property} \texttt{P}, if every other element of \(E\) with property \texttt{P} is a scalar multiple of \(f\).

\begin{theorem} \label{thm: nlf from non-void pps or pole}
Let \(\sub\) be irreducible, with renormalised substitution operator \(T = M/r\). Suppose one of the following holds:
\begin{enumerate}[label=(\Alph*)]
	\item $T'$ admits an eigenmeasure $\phi$ satisfying $\phi(f) > 0$ for all $f \in K$ with $f \neq 0$ and $\sigma_\mathrm{per}^p(T) \neq \varnothing$;
	\item $1$ is a pole of the resolvent of $T$;
	\item $\sub$ is primitive and $\sigma_\mathrm{per}^p(T) \neq \varnothing$.
\end{enumerate}
Then \(\sub\) admits a natural length function \(\ell \in K\). We have that \(\ell\) is an essentially unique eigenvector of \(M\) with eigenvalue \(r\). We also have that \(\ell \in K_{>0}\) and is an essentially unique eigenvector of \(M\) in \(K\). Moreover, if C holds then \(r > 1\) (resp.\ \(1\)) is the only element in the peripheral spectrum of \(M\) (resp.\ \(T\)).
\end{theorem}

\begin{proof}
Note that $T$ is irreducible if and only if $\sub$ is (Proposition \ref{prop: irreducible M}), as irreducibility is preserved under normalisation. Condition A implies the result directly by \cite[Thm.~V.5.2]{SchaeferBook}. Condition B implies the result by case (ii) of the corollary following \cite[Thm.~V.5.2]{SchaeferBook}.

Next, we show that condition C implies condition A. The positive cone $K$ is normal and has non-empty interior. It then follows from \cite[Cor., p.~1015]{Schaefer60} that the dual $T' \colon E' \to E'$ of $T$ admits an eigenmeasure $\phi \in K'$; see also \cite{Schaefer67}. Suppose that $\sub$ is primitive. By Proposition \ref{prop: primitive <=> st. positive iterates} and compactness, for any non-zero $f \geq 0$ there is some $p \in \N$ and $c > 0$ for which $(T^p f )(a) > c$ for all $a \in \A$. Then
\[
\langle \phi, f \rangle = \langle (T')^p \phi, f \rangle = \langle \phi,T^p f \rangle > 0 .
\]
Indeed, $\phi \neq 0$ and hence is non-trivial on $\bbo$ (and thus on any $f \in K_{>0}$) since each such function has \(f \geq \kappa \bbo\) for some \(\kappa > 0\). Supposing that $\sigma_\mathrm{per}^p(T) \neq \varnothing$, we have that A holds, as required. Moreover, $1$ is then the only element of the peripheral point spectrum by \cite[Prop.~V.5.6]{SchaeferBook}.

That \(\ell\) has eigenvalue \(r\), is essentially unique with eigenvalue \(r\), essentially unique in \(K\) and \(\ell \in K_{>0}\) all follows from Theorem \ref{thm: irreducible => unique nlf}.
\end{proof}

\begin{remark}
Note that primitivity cannot be dropped in the final statement of the above theorem, as demonstrated by the irreducible substitution \(\sub(a) = bb\), \(\sub(b) = aa\) on the finite alphabet \(\A = \{a,b\}\).
\end{remark}

\begin{remark} \label{rem: eigenmeasures exist}
By \cite[Cor., p.~1015]{Schaefer60}, as used in the proof of Theorem \ref{thm: nlf from non-void pps or pole}, every substitution $\sub$ admits at least one eigenmeasure \(\phi\). It is easily seen from the proof of Theorem \ref{thm: nlf from non-void pps or pole} that \(\phi > 0\) (that is, \(\phi(f) > 0\) for all non-zero \(f \in K\)) if \(\sub\) is primitive.
\end{remark}

Conditions A and C of Theorem \ref{thm: nlf from non-void pps or pole} beg the question of when the peripheral point spectrum of \(T\) is non-empty. 
Before we proceed, we need the following auxiliary results. Recall that a sequence \((x_n)_n\) in a Banach space \(E\) is said to \emph{converge weakly} to \(x\) if \(\mu(x_n-x) \to 0\) as \(n \to \infty\) for all \(\mu \in E'\). 
For linear subspaces $F\subseteq E$ and $G\subseteq E^{'}$, we say that $F$ \emph{separates points in} $G$, if, for every non-zero $\mu\in G$, there exists $f\in F$ such that $\mu(f)\neq 0$.

\begin{lem}[{\cite[Lem.~8.16]{Eisner}}]\label{LEM:ME-CB}
Let $T$ be a bounded operator on a Banach space $E$. If $T$ is mean ergodic, then $T$ is necessarily Ces\`{a}ro bounded and $\frac{1}{n}T^nf\to 0$ (i.e., strongly) for every $f\in E$. 
\end{lem}

\begin{lem}[{\cite[Thm.~8.20]{Eisner}}]
\label{LEM:CB-FIX}
Let T be Ces\`{a}ro bounded and suppose $\frac{1}{n}T^nf\to 0$ weakly for every $f\in E$. $T$ is mean ergodic if, and only if, $\textnormal{fix}(T)$ separates $\textnormal{fix}(T^{'})$. 
\end{lem}

The following gives a seemingly rather weak general condition that ensures the existence of a natural length function.

\begin{prop} \label{prop: mean ergodic => nlf}
A substitution \(\sub\) with \(T\) mean ergodic admits a natural length function with stretching factor \(\lambda = r\).
\end{prop}

\begin{proof}
The operator \(T\) is necessarily bounded (since it comes from a substitution) hence mean ergodicity and Lemma~\ref{LEM:ME-CB} imply that \(T^n f/n \to 0\) strongly, and thus weakly, for every \(f \in E\) and  that \(T\) is Ces\`{a}ro bounded. 
From Lemma~\ref{LEM:CB-FIX}, it follows that \(\mathrm{fix}(T)\) separates \(\mathrm{fix}(T')\). Since eigenmeasures always exist (Remark \ref{rem: eigenmeasures exist}), we must have that \(\mathrm{fix}(T)\) is non-trivial and thus \(P(f) \neq 0\) for some \(f \in E\).

We now show that there must be a \emph{positive} fixed vector of \(T\). As a pointwise limit of the positive operators \(A_n\), the projection \(P\) to the fixed points of \(T\) must also be a positive operator. Write \(f = f_+ - f_-\), where \(f_+\), \(f_- \in K\). Since \(P(f_+ - f_-) = P(f_+) - P(f_-) = f \neq 0\), at least one of the positive functions \(P(f_+)\) or \(P(f_-)\) is non-zero. Since $f \in \text{fix}(T)$, one has $Tf_+ - Tf_- = Tf = f = f_+ - f_-$. Since the decomposition into the positive and negative parts is unique, one necessarily has $P(f_+) = Tf_+ = f_+$ and $P(f_-) = Tf_- = f_-$. By the discussion above, at least one of $P(f_{+})$ or $P(f_{-})$ is a non-zero positive element fixed by \(T\), so that \(\sub\) admits a natural length function with stretching factor \(r\).
\end{proof}

A reducible substitution need not admit a strictly positive natural length function, a simple example being the substitution \(\sub(a) = aaa\), \(\sub(b) = bb\) over \(\A = \{a,b\}\). However, as we saw in Theorem \ref{thm: irreducible => unique nlf}, for an irreducible substitution any natural length function must have stretching factor \(\lambda = r\) and must be strictly positive and unique, up to rescaling. The result below shows that irreducibility is a necessary and sufficient condition for this strict positivity and uniqueness, and analogously for eigenmeasures:

\begin{prop}\label{prop: mean ergod + irred => unique nlf and eigenmeasure}
Let \(\sub\) be a substitution with \(T\) mean ergodic. Then the following are equivalent:
\begin{enumerate}
	\item \(\sub\) is irreducible
	\item \(\mathrm{fix}(T)\) is one dimensional and spanned by a strictly positive \(\ell \in K_{>0}\).
	\item \(\mathrm{fix}(T')\) is one dimensional and spanned by a strictly positive \(\mu \in K'_{>0}\), where \(K'_{>0}\) is the set of \(\mu \in E'\) for which \(\mu(f) > 0\) for all non-zero \(f \in K\).
\end{enumerate}
\end{prop}

\begin{proof}
Since \(T\) is mean ergodic, \(\mathrm{fix}(T) \neq \{0\}\) by Proposition \ref{prop: mean ergodic => nlf}. Then the semi-group of positive operators \(S = \{T^n\}\) satisfies the conditions of \cite[Prop.~III.8.5]{SchaeferBook}, giving precisely the above equivalent conditions.
\end{proof}

\subsection{Fusion tilings, invariant measures, and power convergence}\label{sec:fusion}

Frank and Sadun's notion of \emph{fusion tilings} \cite{PFS:fusion-ILC} allows for a generalised setting in which we can see that invariant measures on the dynamical system $(\Omega,\R)$ are in one-to-one correspondence with sequences of \emph{volume-normalised} and \emph{transition-consistent} measures on the sets $\mathcal{P}_n$ of  $n$-supertiles; see the discussion below for definitions. In this section, we recall this correspondence and we reformulate volume-normalisation and transition-consistency in our setting. 
With this, one can show that invariant measures are in one-to-one correspondence with sequences
\((\mu^{ }_n)_{n=0}^\infty\) of elements in \(K'\) with \(T'\mu^{ }_n = \mu^{ }_{n-1}\), see Proposition~\ref{prop:fusion inv meas} below. 

This allows one to apply the results in the previous section to determine conditions for unique ergodicity.
We show that a sufficient condition for this is strong power convergence of $T$ and irreducibility (which also guarantees the existence, uniqueness and strict positivity of \(\ell\)). This extends the classical result for primitive substitutions over finite alphabets. We recall some definitions below regarding the general fusion framework in \cite{PFS:fusion,PFS:fusion-ILC}.

We assume that our substitution has natural length function \(\ell > 0\) with inflation factor \(\lambda = r > 1\). This defines geometric (labelled) prototiles, see Definition \ref{def: geometric hulls}. The set of these prototiles is denoted by \(\mathcal{P}_0\). Substitution naturally acts on prototiles; the \(n\)th iterate of substitution applied to the prototile associated to \(a \in \A\) has support \([0,r^n \ell(a)]\), where \(r^n \ell(a)\) is also the sum of lengths of the constituent prototiles (since \(\ell\) is a natural length function). Geometric substitution (also denoted  by \(\sub\)) also acts on tiles, finite patches and tilings in the obvious way.

An \emph{\(n\)-supertile} is a patch of the form \(\sub^n(p)\) for a prototile \(p \in \mathcal{P}_0\), which also carries the label \(a \in \A\) of \(p\) (this labelling is usually unnecessary, namely when the substitution \(\sub \colon \A \to \A^+\) is injective, but is needed otherwise). Since supertiles grow without bound, the geometric substitution rule is automatically van Hove, i.e., the ratio of the measure of the boundaries of \(n\)-supertiles and the volume of the supertile goes to zero as \(n \to \infty\). We let \(\mathcal{P}_n\) denote the space of \(n\)-supertiles, which we can tacitly identify with \(\A\). In particular, any measurable subset \(I \subseteq \A\) can be seen as a subset of \(\mathcal{P}_n\) for any \(n\). Thus, we may also canonically identify any \(f \in C(\mathcal{P}_n)\) with a function in \(E \coloneqq C(\mathcal{A})\), which we will often do without comment.

Let \(\Omega = \Omega^{(0)}\) denote the geometric hull associated to \(X_\sub\), with length function \(\ell\). We similarly consider the spaces \(\Omega^{(n)}\) of the same elements of \(\Omega\), but whose tiles are translates of elements in \(\mathcal{P}_n\) instead of \(\mathcal{P}_0\). One can then define the subdivision map \(\omega^{(n)} \colon \Omega^{(n)} \to \Omega^{(n-1)}\), which simply breaks every \(n\)-supertile into the constituent \((n-1)\)-supertiles; up to rescaling, this corresponds simply to the substitution map \(\sub \colon \Omega \to \Omega\). It quickly follows from Proposition \ref{prop: supertiling existence} that, for each tiling \(\mc{T} \in \Omega^{(n)}\), there is some \(\mc{T}' \in \Omega^{(n+1)}\) with \(\omega^{(n+1)}(\mc{T}') = \mc{T}\) (equivalently, \(\sub \colon \Omega \to \Omega\) is surjective) and we call \(\mc{T}'\) a \emph{supertiling} of \(\mc{T}\). We say that \(\sub\) is \emph{recognisable} if the choice of supertiling is unique, that is, \(\sub \colon \Omega \to \Omega\) is injective (equivalently, the subdivision map \(\omega^{(n)}\) is injective for all \(n \in \N\), see~\cite[Sec.~2]{PFS:fusion-ILC}).

\begin{remark}
There exists substitutions over compact alphabets that are aperiodic but are \emph{not} recognisable; see \cite[Ex.~28]{DOP:self-induced} for an example. The substitution above is an example of a profinite substitution \cite{RY-profinite}, and is in fact conjugate to the dyadic odometer (and hence uniquely ergodic).
\end{remark}

The following is the restriction of the definitions in \cite[Sec.~4]{PFS:fusion-ILC} to our setting.
 
\begin{definition}
A regular Borel measure $\mu^{ }_n$ on $\mathcal{P}_n$ (which we may identify with a (positive) element of \(K'\subset C(\mc{P}_n)' \)) is called \emph{volume-normalised} if one has 
\begin{equation}\label{eq: volume normalised}
\int_{\mathcal{A}} \ell^{(n)}(a)\,\dd\mu^{ }_n(a) \coloneqq \mu^{ }_n(\ell^{(n)}) = 1 ,
\end{equation}
where $\ell^{(n)}(a) \coloneqq M^{n}\ell(a) = r^n \ell(a)$ is simply the geometric length of the $n$-supertile corresponding to $a\in\mathcal{A}$. Let $\mathcal{M}^{(n)}$ be the set of volume-normalised measures on $\mathcal{P}_n$. A pair $(\mu^{ }_n,\mu^{ }_N)\in\mathcal{M}^{(n)}\times\mathcal{M}^{(N)}$ is called \emph{transition consistent} if one has 
\begin{equation}\label{eq: trans-cons}
 \mu^{ }_n = (M^{\prime})^{N-n}(\mu^{ }_N),\ \text{ equivalently }\ \mu^{ }_n = \mu^{ }_N\circ M^{N-n} ,
\end{equation} 
where $M$ and $M'$ are the substitution and the dual substitution operators, respectively.
\end{definition}

The following is a correspondence result for tiling spaces generated by fusion rules. For us, the version for $d=1$ suffices.

\begin{theorem}[{\cite[Thm.~4.2]{PFS:fusion-ILC}}]\label{thm: ILC-fusion-invmeas}
Let $\mathcal{R}$ be a van Hove and recognisable fusion rule in $\mathbb{R}^d$, where $\A$ is a compact metric space. Let $\Omega_{\mathcal{R}}$ be the corresponding tiling space with $\mathbb{R}^d$-action. Then there is a one-to-one correspondence between  the set of $\mathbb{R}^d$-invariant measures on $\Omega_{\mathcal{R}}$  and the space of sequences $\{\rho_n\}$ of transition-consistent and volume-normalised measures on $\mathcal{P}_n$. 
\end{theorem}

We now continue to reinterpret Eqs.~\eqref{eq: volume normalised} and \eqref{eq: trans-cons}. 
Using instead notation similar to \cite{PFS:fusion-ILC}, for any continuous function $f \in  E \coloneqq C(\A)$,
\[
\mu^{ }_n(f)=\int_{\mathcal{A}}f(a)\,\dd\mu^{ }_n(a)=
\int_{\mathcal{A}} M^{N-n}f(a)\,\dd\mu^{ }_N(a)=(\mu^{ }_N\circ M^{N-n})(f) .
\]
Note that \(\mu^{ }_N (M^{N-n} f)\), for \(f \colon \mc{P}_n \to \R\), is simply given by considering \(f\) as a function defined over \(N\)-supertiles, by first taking the function \(\mc{P}_N \to \R\) which, on an \(N\)-supertile \(a\), sums \(f\) over \(a\)'s constituent \(n\)-supertiles (which follows from the definition of \(M^{N-n}\)), and then integrating this with \(\mu^{ }_N\). Again, we may identify each \(\mc{P}_n\) with \(\A\), which rescales \(n\)-supertiles by \(r^{-n}\). Due to this rescaling, let us alternatively identify a continuous function \(f \colon \mc{P}_n \to \R\) with the function \(\widetilde{f} \colon \A \to \R\), where \(\widetilde{f}(a) \coloneqq r^{-n} f(a)\) (where, for \(f(a)\), we are in fact identifying \(a\) with its \(n\)-supertile), and similarly identify a measure \(\mu^{ }_n\) on \(\mathcal{P}_n\) with \(\widetilde{\mu}^{ }_n = r^{-n}\mu^{ }_n = f \mapsto \widetilde{\mu}^{ }_n(f)\). Then the volume normalisation and transition consistency conditions in Eqs.~\eqref{eq: volume normalised} and \eqref{eq: trans-cons}  become the conditions
\begin{align}
&\widetilde{\mu}^{ }_n(\ell) = 1, \quad \text{ for all } n \label{eq:vol-norm-tilde}\\ 
&\widetilde{\mu}^{ }_n = (T')^{N-n}(\widetilde{\mu}^{ }_N) \quad \text{ for all }  n < N. \label{eq:trans-con-tilde}
\end{align}
Note that, since \(T(\ell) = \ell\), volume normalisation follows from \(\widetilde{\mu}^{ }_0(\ell) = 1\) and transition consistency. Let \(\widetilde{\mc{M}}^{(n)}\) denote the measures on \(\mathcal{P}_n\) that are volume normalised with the above identification, i.e., those measures \(\mu \in K'\) with \(\mu(\ell) = 1\). For a fixed $n<N$ the following is defined in \cite{PFS:fusion-ILC}:
\[
\Delta_{n,N} = (M')^{N-n}\mc{M}^{(N)} .
\]
Then, by Eqs.~\eqref{eq:vol-norm-tilde} and \eqref{eq:trans-con-tilde}, we may identify this with
\begin{equation} \label{eq: del}
\widetilde{\Delta}_{n,N} = (T')^{N-n} \widetilde{\mc{M}}^{(N)} = \left\{\widetilde{\mu} \in (T')^{N-n}(K') \mid \widetilde{\mu}(\ell) = 1\right\} .
\end{equation}
Let us now drop the tildes and use the above as our definition of \({\Delta}_{n,N}\):
\[
\Delta_{n,N} \coloneqq \left\{\mu \in (T')^{N-n}(K') \mid \mu(\ell) = 1\right\} .
\]
We then define
\begin{equation}\label{eq:inv-lim-sub}
\Delta_n \coloneqq \bigcap_{N \geq n} \Delta_{n,N} \ \text{ and } \ \Delta_\infty \coloneqq \varprojlim \Delta_n = \left\{\{\mu_n\} \mid \mu_n \in \Delta_n, T'(\mu_n) = \mu_{n-1}\right\},
\end{equation}
using the notation \(\{\mu_n\}\) for a sequence of measures, as in \cite{PFS:fusion-ILC}. 

\begin{remark}
In Theorem~\ref{thm: ILC-fusion-invmeas}, it is assumed in its original formulation in \cite{PFS:fusion-ILC} that each \(\mc{P}_n\) is a compact metric space, equivalently here that \(\A\) is metrisable. However, the assumption of metrisability is easily shown to be unnecessary. Since laying out the details here would be time-consuming and largely just represent very similar arguments, we leave it as a simple exercise to the reader to check that the results and arguments in \cite{PFS:fusion-ILC} naturally extend to the case of a general compact, Hausdorff alphabet \(\A\) with no metric.
\end{remark}

By the previous discussion, the next result is a restatement of Theorem~\ref{thm: ILC-fusion-invmeas} for substitutions on compact Hausdorff alphabets. 

\begin{prop}\label{prop:fusion inv meas}
For \(\sub\) recognisable there is a bijection between the set of translation-invariant Borel probability measures on $\Omega$ and elements of $\Delta_\infty$ in Eq.~\eqref{eq:inv-lim-sub}. 
\end{prop}

\begin{theorem}\label{thm:unique ergod FS}
For \(\sub\) recognisable, the tiling dynamical system $(\Omega,\R)$ is uniquely ergodic if and only if $\Delta_\infty$ is a singleton. If \(\sub\) is not recognisable, $(\Omega,\R)$ is uniquely ergodic if \(\Delta_\infty\) is a singleton.
\end{theorem}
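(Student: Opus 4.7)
The plan is to reduce both parts of the statement to Theorem~\ref{thm:fusion inv meas}, which has already done the heavy lifting by identifying invariant probability measures on $(\Omega,\R)$ with elements of $\Delta_\infty$ in the recognisable setting. First, for the recognisable case, the iff is essentially definitional: unique ergodicity of $(\Omega,\R)$ means that the set of translation-invariant Borel probability measures is a singleton, and under the bijection provided by Theorem~\ref{thm:fusion inv meas} this set is put in one-to-one correspondence with $\Delta_\infty$; so one side is a singleton precisely when the other is. There is nothing further to verify here beyond noting that the bijection is a set-theoretic bijection so preserves cardinality.

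For the non-recognisable case, only one direction is asserted, and the plan is to extract from the fusion formalism the fact that the assignment $\Phi \colon \mu \mapsto (\mu_n)_{n \geq 0}$ is still well defined and injective on translation-invariant probability measures, even when the supertile decomposition is ambiguous. Concretely, $\mu_n$ is defined by integrating continuous functions supported near the origin against $\mu$ and recording the resulting frequencies of $n$-supertile patches around the origin; this construction makes sense without recognisability because it only records the measure-theoretic density of occurrences of supertile types, not a choice of decomposition. Injectivity of $\Phi$ follows from the standard fact that the cylinder sets on patches generate the Borel $\sigma$-algebra on $\Omega$, so two invariant measures agreeing on all $n$-supertile frequencies must coincide. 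Each $\Phi(\mu)$ satisfies volume normalisation and transition consistency (equation~\eqref{eq: trans-cons}), hence lies in $\Delta_\infty$.

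To conclude, I will combine injectivity of $\Phi$ with a standard existence argument. Since $\Omega$ is a compact Hausdorff space on which $\R$ acts continuously, Krylov–Bogolyubov guarantees the existence of at least one translation-invariant probability measure on $\Omega$. If $\Delta_\infty = \{(\nu_n)\}$ is a singleton, then every invariant measure must map under the injective $\Phi$ to $(\nu_n)$, and by injectivity any two invariant measures coincide; together with existence, $(\Omega,\R)$ is uniquely ergodic.

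The main obstacle is verifying cleanly, in the non-recognisable setting, that $\Phi$ is genuinely well defined and injective purely from the data encoded in the fusion machinery of \cite{PFS:fusion-ILC}; the defining integrals must be shown to be independent of any implicit decomposition choice, and the separation of measures by patch frequencies must be established without appealing to recognisability. Once this bookkeeping is done, the rest is formal.
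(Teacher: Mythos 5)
Your treatment of the recognisable case is fine and matches the paper: the equivalence is an immediate consequence of the bijection in Theorem~\ref{thm:fusion inv meas}. The problem is the non-recognisable case, where the step you defer as ``bookkeeping'' is in fact the entire difficulty, and your proposed construction of $\Phi$ does not survive scrutiny. Without recognisability there is no canonical decomposition of a tiling into $n$-supertiles, so ``the frequency of the $n$-supertile $\sub^n(a)$'' has no intrinsic meaning: the only decomposition-free substitute is the frequency of occurrences of the \emph{patch} $\sub^n(a)$, and that fails on two counts. First, distinct letters $a \neq b$ can have $\sub^n(a)$ and $\sub^n(b)$ equal (or overlapping) as patches, and a patch $\sub^n(a)$ can occur ``accidentally'' at positions not arising from any hierarchical decomposition, so the resulting sequence $(\mu_n)$ need not satisfy the transition consistency relation $\mu_n = (T')^{N-n}\mu_N$ --- that relation counts only the hierarchical occurrences of $n$-supertiles inside $N$-supertiles. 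Second, your injectivity claim conflates ``agreeing on all patch cylinder sets'' (which does separate measures) with ``agreeing on all supertile frequencies'' (which is much weaker data); the passage from supertile frequencies back to cylinder-set measures is exactly the explicit formula of \cite[Sec.~4]{PFS:fusion-ILC}, and it is proved there \emph{using} recognisability. So neither well-definedness nor injectivity of $\Phi$ is available, and your argument does not close.

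The paper avoids this entirely by a different route: it forms the extended hull $\Omega' = \varprojlim (\Omega \xleftarrow{\sub} \Omega \xleftarrow{\sub} \cdots)$, whose points are compatible hierarchies $(\T_n)$ of supertilings, so that recognisability holds on $\Omega'$ by construction. Theorem~\ref{thm:fusion inv meas} then identifies the invariant measures on $\Omega'$ with $\Delta_\infty$; if $\Delta_\infty$ is a singleton, $(\Omega',\R)$ is uniquely ergodic, and since $(\T_n) \mapsto \T_0$ exhibits $(\Omega,\R)$ as a factor of $(\Omega',\R)$ and factors of uniquely ergodic systems are uniquely ergodic, the conclusion follows. (Note this also explains why only one implication survives without recognisability: the factor map need not be injective on measures.) If you want to salvage your approach, you would need to define $\Phi$ on $\Omega'$ rather than on $\Omega$ --- which is precisely the paper's construction in disguise.
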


\begin{proof}
The first statement follows trivially from Proposition~\ref{prop:fusion inv meas} in the recognisable case. If \(\sub\) is not recognisable, we may consider an abstract hull where recognisability is forced, as follows. In defining \(\Omega\), one may modify the definition of tilings in the geometric hull to be instead hierarchies of tilings \(\T = (\T_n)_{n=0}^\infty\), where each \(\T_n \in \Omega^{(n)}\) and \(\sub(\T_n) = \T_{n-1}\). In other words, we define elements of the hull as instead sequences of tilings, supertilings, \(2\)-supertilings, and so on, so that the tiles of \(\T_n\) may be grouped into those of \(\T_{n+1}\) consistently with respect to the substitution. So we consider the `extended geometric hull' \(\Omega'\) of such elements, which we may identify with
\[
\Omega' = \varprojlim (\Omega^{(0)} \xleftarrow{\sub} \Omega^{(1)} \xleftarrow{\sub} \cdots),
\]
which also provides the topology on \(\Omega'\); translation acts in the obvious way. Substitution \((\T_n)_{n=0}^\infty \mapsto (\sub(\T_n))_{n=0}^\infty = (\T_{n+1})_{n=0}^\infty\) then acts as a homeomorphism on \(\Omega'\), building recognisability into the system. The proof of Proposition~ \ref{prop:fusion inv meas} from \cite{PFS:fusion-ILC} essentially only requires recognisability to identify elements of the hull with consistent supertiling sequences, so Proposition~\ref{prop:fusion inv meas} applies to \(\Omega'\) and we may identify its invariant measures with \(\Delta_\infty\). But \(\Omega'\) naturally factors onto \(\Omega\), by the map \((\T_n)^{ }_n \mapsto \T_0\), and factors of uniquely ergodic dynamical systems are still uniquely ergodic, so the result follows.
\end{proof}

\begin{remark} \label{rem: Delta ER}
Clearly each \(\Delta_n = \Delta_0\) is the eventual range of \(K'\) under the map \(T'\), restricted to \(\mu \in K'\) with \(\mu(\ell) = 1\). Since \(T'\) must be surjective on this eventual range, we have that \(\Delta_\infty\) is a singleton if and only if \(\Delta_0\) is. Then if \((\Omega,\R)\) is uniquely ergodic, the unique invariant measure corresponds to a unique volume normalised eigenmeasure of \(T'\). However, the converse is not true:
\end{remark}

\begin{example}
Consider the irreducible (but non-primitive) substitution 
\[
\sub \colon
\begin{cases}
a\mapsto bb, \\
b\mapsto aa.
\end{cases}
\]
The corresponding operator is \(T\colon \R^2 \to \R^2 \colon (x,y) \mapsto (y,x)\), which is Markov, meaning that \(T(\bbo) = \bbo\) (as the substitution is constant length). Then the fixed subspace of \(T'\) is one-dimensional and generated by \((\frac{1}{2},\frac{1}{2})\). However, $\Delta_\infty$ is 2-dimensional, consisting of the sequences \((\alpha,\beta) \leftarrow (\beta,\alpha) \leftarrow (\alpha,\beta) \leftarrow \cdots,\) where \(\alpha+\beta = 1\) for \(\alpha\), \(\beta \geq 0\) (\(X_\sub\) consists of two periodic points, of all \(a\)s and all \(b\)s). The operator $T$ is irreducible and mean ergodic but is not strongly power convergent since $T^{2n}=I$ and $T^{2n+1}=T$.
\end{example}

Our main result of this section is the following sufficient condition for unique ergodicity in terms of the normalised substitution operator $T$.

\begin{theorem}\label{thm:inv measures - tiling}
Let \(\sub\) be an irreducible substitution on a compact Hausdorff alphabet $\A$ with \(T\) strongly power convergent. Then \(\sub\) admits a natural length function \(\ell \in K\) with inflation factor \(\lambda = r > 1\). Moreover, this is an essentially unique natural length function and \(\ell(a) > 0\) for all \(a \in \A\). With respect to this length function, $(\Omega,\R)$ is uniquely ergodic.
\end{theorem}

\begin{proof}
By Proposition \ref{prop: mean ergod + irred => unique nlf and eigenmeasure} we have an essentially unique natural length function \(\ell > 0\), with inflation factor \(\lambda = r\), and the projection operator $P$, to which $T^n$ converges strongly (though not necessarily uniformly), must be of rank $1$.

Suppose that a volume normalised and transition consistent sequence \(\{\mu_n\}\) is given and let $f \in E = C(\A)$ be arbitrary. From transition consistency,
\[
\mu^{ }_0(f)=(T')^n(\mu^{ }_n)(f)=\mu^{ }_n(T^nf).
\]
Since $T$ is strongly power convergent, $\lim_{n\to\infty} T^nf \to Pf = c\ell$ for some $c\in \R$. It follows that 
\[
\mu^{ }_0(f) = \mu^{ }_n(T^nf) = \mu^{ }_n(c\ell+v_n) = c\mu^{ }_n(\ell) + \mu^{ }_n(v_n) = c + \mu^{ }_n(v_n),
\]
where $v_n \to 0$ uniformly over \(\A\) as $n \to \infty$. Since \(\ell > 0\) we have that \(\kappa \ell \geq \bbo\) for some \(\kappa > 0\), and since each \(\mu^{ }_n\) is positive we have that \(\|\mu^{ }_n\| = \mu^{ }_n(\bbo) \leq \kappa \mu^{ }_n(\ell) = \kappa\) for all \(n\). Thus, since \(v_n \to 0\) uniformly as \(n \to \infty\) and the norms of the \(\mu^{ }_n\) are bounded, it follows that \(\mu^{ }_n(v_n) \to 0\) as \(n \to \infty\) and \(\mu^{ }_0(f) = c\). Since the volume normalised and transition consistent sequence was arbitrary, \(\Delta_0\) is a singleton, so \(\Delta_\infty\) is too (Remark \ref{rem: Delta ER}), thus \((\Omega,\R)\) is uniquely ergodic by Theorem \ref{thm:unique ergod FS}.
\end{proof}

There is an explicit form for the measures of cylinder sets in $\Omega$ in terms of a sequence $\left\{\mu^{ }_n\right\} \in \Delta_\infty$, which is given in terms of measures on sets of patches that generate the topology on $\Omega$. For the exact formulation, we refer the reader to \cite[Sec.~4]{PFS:fusion-ILC}.

\subsection{Invariant measures and suspension flows}\label{sec:suspension}

To end this section, we go back to the relation between the $\R$-invariant measures on $(\Omega,\R)$ and the $\sigma$-invariant measures on $(X_{\sub},\sigma)$ via the suspension flow $(Y,\mathbb{R})$.

Let $\mathcal{M}(Y)$ be the set of $\R$-invariant probability measures on $Y$ and let $\mathcal{M}(X)$ be the set of $\sigma$-invariant measures on the subshift. Since the roof function $f$ is bounded away from zero, it is well known \cite{AK-flows,BRW-suspension} that there is a one-to-one correspondence $L\colon\mathcal{M}(X)\to  \mathcal{M}(Y)$, given by
\[
\widetilde{\mu} \coloneqq L(\mu)=\frac{(\mu\times \mu^{ }_{\text{Leb}})|_{Y}}{\int_X f(x)\,\dd\mu(x)}.
\]
This leads to the following immediate consequence. 

\begin{coro}\label{cor: inv measures - subshift}
Let $\sub$ be a substitution on a compact Hausdorff alphabet that is recognisable and admits a natural length function $\ell$ that is strictly positive and has inflation factor \(\lambda > 1\). Then there is a one-to-one correspondence between the elements of $\Delta_{\infty}$ and the set $\mathcal{M}(X_{\sub})$ of $\sigma$-invariant probability measures on $X_{\sub}$. If \(\sub\) is not recognisable, then \(X_\sub\) is uniquely ergodic if \(\Delta_\infty\) is a singleton.
\end{coro}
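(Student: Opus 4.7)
The plan is to obtain the first statement by chaining two bijections that have already been set up in this section, and to obtain the second statement by running essentially the same argument in the direction that does not require recognisability.

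For the recognisable case, Theorem \ref{thm:fusion inv meas} supplies a bijection between $\Delta_\infty$ and the set of translation-invariant Borel probability measures on the geometric hull $(\Omega,\R)$. It remains to produce a bijection between $\mathcal{M}(\Omega)$ and $\mathcal{M}(X_\sub)$. This is exactly the point of the preceding discussion identifying $(\Omega,\R)$ with the suspension flow $(Y,\R)$ over $(X_\sub,\sigma)$ with roof function $f(x) = \ell(x_0)$. Since $\ell \colon \A \to \R_{>0}$ is continuous and $\A$ is compact, $f$ is bounded away from zero, so the Ambrose--Kakutani correspondence (as recorded above, citing \cite{AK-flows,BRW-suspension}) yields a bijection $L \colon \mathcal{M}(X_\sub) \to \mathcal{M}(Y) = \mathcal{M}(\Omega)$ explicitly given by $L(\mu) = (\mu \times \mu_{\mathrm{Leb}})|_Y / \int_X f \,\dd\mu$. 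Composing $L^{-1}$ with the bijection of Theorem \ref{thm:fusion inv meas} then gives the desired correspondence between $\Delta_\infty$ and $\mathcal{M}(X_\sub)$.

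For the non-recognisable case, we may no longer invoke the bijection of Theorem \ref{thm:fusion inv meas}, but we do not need it: Theorem \ref{thm:unique ergod FS} still states that if $\Delta_\infty$ is a singleton then $(\Omega,\R)$ is uniquely ergodic, where recall that the proof of that direction uses the extended hull $\Omega'$ together with the fact that factors of uniquely ergodic systems are uniquely ergodic. The suspension correspondence $L$, which is valid whenever $f$ is continuous and bounded away from zero, is a bijection independent of recognisability. Hence $\#\mathcal{M}(X_\sub) = \#\mathcal{M}(Y) = \#\mathcal{M}(\Omega) = 1$, so $X_\sub$ is uniquely ergodic.

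The main technical point to verify is simply that the geometric hull $\Omega$ built from the natural length function genuinely coincides with the suspension flow $Y$ of $(X_\sub,\sigma)$ under the roof $x \mapsto \ell(x_0)$. This is immediate from the construction: each element of $\Omega$ is obtained by replacing each letter $x_i$ in a sequence $x \in X_\sub$ by an interval of length $\ell(x_i)$ and then translating, which is exactly the quotient $(X_\sub \times \R)/{\sim}$ appearing in the definition of $Y$. I expect no serious obstacle beyond this identification and the observation that strict positivity and continuity of $\ell$ give $\inf_{a \in \A} \ell(a) > 0$ by compactness, which is what legitimises both the Ambrose--Kakutani correspondence and the interpretation of $\Omega$ as a well-defined tiling space.
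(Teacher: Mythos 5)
Your argument is correct and is exactly the route the paper intends: compose the bijection of Theorem~\ref{thm:fusion inv meas} with the suspension-flow correspondence $L$ in the recognisable case, and in the non-recognisable case combine Theorem~\ref{thm:unique ergod FS} with $L$ (which needs only that the roof function $x \mapsto \ell(x_0)$ is continuous and bounded away from zero, guaranteed by compactness and strict positivity of $\ell$). The paper treats this as an immediate consequence of the preceding discussion, so your write-up simply makes explicit what the authors left implicit.
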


\section{Applications}\label{SEC:applications}

Using the operator-theoretic perspective above, we will establish two classes of substitutions that are uniquely ergodic. We begin with examples that have a type of quantitative coincidence property, that appears to apply very generally to primitive substitutions on alphabets \(\A\) containing an isolated point. Afterwards, we will consider constant length substitutions whose columns generate uniformly equicontinuous semigroups.

\subsection{Quasi-compact substitutions and unique ergodicity}\label{SEC:ergodicity}
In this section we will give a simple condition on $\sub$ that ensures the operator $T$ to be \emph{quasi-compact}, allowing us to deduce strong properties of the substitution.

\begin{definition}
An operator $C\colon E\to E$ is called \emph{compact} if the image of the unit ball under $C$ is relatively compact. 
\end{definition}

\begin{prop} \label{prop: non-compact}
If \(\# \A = \infty\) and \(\sub\) is letter surjective then \(M\) is not compact.
\end{prop}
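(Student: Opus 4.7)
The plan is to exhibit a bounded sequence $(f_n)_{n \in \N}$ in $E$ whose image $(M f_n)$ admits no uniformly convergent subsequence, which directly falsifies compactness of $M$.

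First I would extract an infinite sequence of pairwise disjoint non-empty open subsets $U_n \subseteq \A$. Since $\A$ is infinite, compact and Hausdorff, it cannot be discrete and so contains a non-isolated point $p$; in a Hausdorff space every neighbourhood of a non-isolated point is infinite, so by iteratively choosing some $q_n \neq p$ in a shrinking neighbourhood of $p$ and separating $q_n$ from $p$ by disjoint open sets $U_n \ni q_n$ and $V_n \ni p$ with $U_{n+1} \subseteq V_n$, one produces the required pairwise disjoint $(U_n)$. Pick $a_n \in U_n$ and, using letter surjectivity, pick $b_n \in \A$ with $a_n \triangleleft \sub(b_n)$. By complete regularity (Remark \ref{rem: tychonoff}) there exist continuous bump functions $f_n \colon \A \to [0,1]$ with $f_n(a_n) = 1$ and $f_n \equiv 0$ on $\A \setminus U_n$; in particular, $\|f_n\| = 1$ for every $n$.

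The key observation is that for any fixed $b \in \A$ the word $\sub(b) = c_1 \cdots c_L$ has length $L \leq |\sub| < \infty$ by Proposition \ref{PROP:const-length}, and each letter $c_i$ lies in at most one of the $U_n$ by disjointness. Hence $(Mf_n)(b) = \sum_i f_n(c_i) = 0$ for all but finitely many $n$, so $(M f_n)(b) \to 0$ pointwise on $\A$. At $b_n$, however, we have $(Mf_n)(b_n) \geq f_n(a_n) = 1$, so $\|M f_n\| \geq 1$ for every $n$.

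Suppose, for contradiction, that $M$ is compact. Since $E = C(\A,\R)$ is a metric space and $(f_n)$ is bounded, some subsequence $(Mf_{n_k})$ converges uniformly to some $g \in E$. By uniqueness of limits, $g$ coincides with the pointwise limit computed above, so $g \equiv 0$ and $\|Mf_{n_k}\| \to 0$, contradicting $\|Mf_{n_k}\| \geq 1$. Hence $M$ is not compact. The only mildly delicate step is the topological lemma producing the disjoint $U_n$; letter surjectivity is essential, since without it the $a_n$ might not appear in any substitute and we would lose the lower bound $(Mf_n)(b_n) \geq 1$.
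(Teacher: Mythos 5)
Your proof is correct and follows essentially the same route as the paper's: an infinite family of pairwise disjoint open sets, Urysohn-type bump functions supported on them, and letter surjectivity to guarantee each bump is picked up by $M$ at some letter $b_n$. The only (minor) difference is the final contradiction — the paper uses the cumulative sums $f_n = g_1 + \cdots + g_n$ and shows the images are uniformly separated, whereas you use the individual bumps and play the pointwise limit $Mf_n \to 0$ against the lower bound $\|Mf_n\| \geq 1$; both are valid.
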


\begin{proof}
Since \(\A\) is Hausdorff and \(\#\A = \infty\) there exists an infinite sequence \(U_1\), \(U_2\), \ldots of non-empty, disjoint open subsets of \(\A\). Since \(\sub\) is letter surjective, for each \(U_i\) we may pick some \(p_i \in U_i\) and \(a_i \in \A\) with \(p_i \triangleleft \sub(a_i)\). For each \(i \in \N\) we may find a continuous map \(g_i \colon \A \to [0,1]\) with \(g_i(p_i)=1\) and \(g_i(a) = 0\) for \(a \notin U_i\) (Remark \ref{rem: tychonoff}). For \(n \in \N\), define \(f_n \colon \A \to [0,1]\) by \(f_n(a) \coloneqq g_1(a) + \cdots + g_n(a)\). Then each \(f_n\) belongs to the unit ball of \(E\) but we claim that \((M(f_n))_n\) has no convergent subsequence. Indeed, given arbitrary \(n \in \N\), we note that \(\|M(f_n) - M(f_i)\| \geq 1\) for \(i < n\), since
\[
(M(f_n) - M(f_i))(a_n) = M(f_n - f_i)(a_n) = M(g_{i+1})(a_n) + \cdots + M(g_n)(a_n) \geq M(g_n)(a_n) \geq 1.
\]
The last inequality follows from the fact that we have \(p_n \triangleleft \sub(a_n)\), so that \(M(g_n)(a_n) = g_n(x_1) + \cdots + g_n(x_k) \geq 1\), where \(\sub(a_n) = x_1 x_2 \cdots x_k\) and \(g_n(x_i) = 1\) for \(x_i = p_n\).
\end{proof}

Recall from Remark \ref{rem: letter surjective} that any substitution can be restricted to a letter surjective substitution (although perhaps not without reducing the subshift), and from Remark \ref{rem: language realised => letter surjective} that any substitution that realises the whole alphabet in the subshift is letter surjective. So essentially all infinite substitutions of interest here have non-compact substitution operators. However, we are able to identify cases where $T$ satisfies a weaker condition called \emph{quasi-compactness}.
Quasi-compactness is a very powerful property here, since it ensures the existence of a continuous length function and unique ergodicity in the primitive case, as we will see below.

\begin{definition}
An operator $T$ with $r(T)=1$ is called \emph{quasi-compact} if there exists some compact operator $C$ and power $n \in \N$ for which $\|T^n - C\| < 1$.
\end{definition}

The above definition can be reformulated as follows. Let \(\mathcal{B}(E)\) be the Banach algebra of bounded linear operators on \(E\). We have the Banach subalgebra \(\mathcal{K}(E) \leqslant \mathcal{B}(E)\) of compact operators and the \emph{Calkin algebra} \(\mathcal{B}(E) / \mathcal{K}(E)\). Now, there are several notions of `essential spectrum' for the operator \(T\), which are inequivalent (but nonetheless give the same notion of essential spectral radius). One is that \(\lambda \in \C\) is in the essential spectrum if \(T - \lambda\Id\) is non-invertible in the Calkin algebra, equivalently \cite{Arv02}, \(T - \lambda \Id\) is not Fredholm (an operator is Fredholm if its range is closed and both its kernel and cokernel are finite-dimensional). An alternative (which gives a different essential spectrum in general) is to take the Browder spectrum \cite{Bro61}, that is, those \(\lambda \in \C\) for which one of the following holds: \(\text{range}(T-\lambda\Id)$ is not closed in $E$, \(\text{dim}\left(\bigcup_{r\geqslant 0} \text{ker}\left((T-\lambda\Id)^r\right)\right)=\infty\) or $\lambda$ is an accumulation point of $\sigma(T)$. In any case, the \emph{essential spectral radius} \(r_\mathrm{ess}(T)\) is then the supremum of moduli of elements in the chosen essential spectrum. Equivalently, there is a Gelfand formula: one may define the operator norm \(\|[T]\|_\mathrm{Cal}\) in the Calkin algebra as the infimum of operator norms \(\| T - C \|\) over all compact operators \(C\). Then \(r_\mathrm{ess}(T) = \lim_{n \to \infty} \sqrt[n]{\|[T^n]\|_\mathrm{Cal}}\), which is a decreasing sequence. Then clearly, for \(r(T) = 1\), we have that \(T\) is quasi-compact if and only if \(r_\mathrm{ess}(T) < 1\).

From \cite[Lem.~17]{Bro61} (see also \cite[Thm.~1]{Lay67}):

\begin{lem} \label{lem: browder spectrum}
We have that \(\lambda \in \sigma(T)\) lies outside the Browder spectrum if and only if \(\lambda\) is a pole of the resolvent \((\lambda\mathbb{I}-T)^{-1}\) of finite rank.
\end{lem}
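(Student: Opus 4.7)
The plan is to unpack the definition of the Browder spectrum and then invoke classical spectral theory relating Fredholm theory with finite ascent/descent on one side and poles of the resolvent on the other. Recall that $\lambda \in \sigma(T)$ lies outside the Browder spectrum precisely when $T - \lambda\mathbb{I}$ is Fredholm and has both finite ascent $p$ (the smallest integer with $\ker(T-\lambda\mathbb{I})^{p} = \ker(T-\lambda\mathbb{I})^{p+1}$) and finite descent $q$ (the smallest integer with $\mathrm{ran}(T-\lambda\mathbb{I})^{q} = \mathrm{ran}(T-\lambda\mathbb{I})^{q+1}$). So the aim is to show that this Fredholm+ascent/descent condition is equivalent to $\lambda$ being a pole of the resolvent of finite rank.

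For the forward direction, I would first quote the classical result (due to Kato) that finite ascent $p$ and finite descent $q$ together force $p = q$ and yield a $T$-invariant topological direct sum $E = \ker(T-\lambda\mathbb{I})^{p} \oplus \mathrm{ran}(T-\lambda\mathbb{I})^{p}$. The projection onto the first summand along the second is then the Riesz spectral projection for $\{\lambda\}$. Since $T - \lambda\mathbb{I}$ is Fredholm, each $\ker(T-\lambda\mathbb{I})^{j}$ is finite-dimensional (by induction, using that $\ker(T-\lambda\mathbb{I})^{j+1}/\ker(T-\lambda\mathbb{I})^{j}$ embeds into the finite-dimensional $\ker(T-\lambda\mathbb{I})$); hence the spectral projection has finite rank. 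On the first summand $T - \lambda\mathbb{I}$ is nilpotent of exact index $p$, and on the second it is invertible, so computing the Laurent expansion of the resolvent around $\lambda$ shows that $\lambda$ is a pole of order $p$.

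For the converse, suppose $\lambda$ is a pole of order $p$ with spectral projection $P$ of finite rank. The Riesz decomposition $E = \mathrm{ran}(P)\oplus\ker(P)$ is $T$-invariant; the restriction of $T-\lambda\mathbb{I}$ to $\mathrm{ran}(P)$ is nilpotent of index $p$, and its restriction to $\ker(P)$ is invertible. Reading off ascent and descent from this decomposition gives both equal to $p$. Finally, $T - \lambda\mathbb{I}$ is the direct sum of a finite-rank nilpotent operator and an invertible one, so has finite-dimensional kernel and cokernel and closed range, proving it is Fredholm.

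The main obstacle is largely notational: several inequivalent definitions of the ``essential/Browder spectrum'' appear in the literature (via Fredholm theory, via Riesz points, or via the complement of the set of normal eigenvalues), and one must align the version used in \cite{Bro61,Lay67} with the direct characterisation by poles of finite rank. Once the Kato decomposition lemma is in hand, however, the equivalence is a straightforward bookkeeping exercise, and the lemma is then essentially a restatement of \cite[Lem.~17]{Bro61} and \cite[Thm.~1]{Lay67}.
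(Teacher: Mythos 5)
Your argument is correct, but note that the paper does not prove this lemma at all: it is stated as a direct quotation of \cite[Lem.~17]{Bro61} and \cite[Thm.~1]{Lay67}, so there is no internal proof to compare against, and what you have supplied is a self-contained derivation of the cited fact. Your route is the standard one and is essentially what Lay's paper does: characterise the complement of the Browder spectrum as the points where \(\lambda\mathbb{I}-T\) is Fredholm with finite ascent and descent, invoke the Kato--Taylor decomposition \(E = \ker(\lambda\mathbb{I}-T)^{p} \oplus \mathrm{ran}(\lambda\mathbb{I}-T)^{p}\) to exhibit a finite-rank Riesz projection and read off the pole order from the nilpotent-plus-invertible splitting, then reverse the argument from the Riesz decomposition \(E = \mathrm{ran}(P)\oplus\ker(P)\). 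The issue you flag as the ``main obstacle'' is genuinely the only delicate point: Browder's own definition of his essential spectrum is phrased via isolated spectral points with finite-dimensional generalized eigenspace and closed range rather than via Fredholm-plus-ascent/descent, so strictly speaking your proof establishes the equivalence of the latter characterisation with poles of finite rank, and one must still match it to the definition actually used in \cite{Bro61}; this is standard but should be acknowledged rather than absorbed into ``bookkeeping.'' Two details worth making explicit in a full write-up: the algebraic direct sum in the Kato decomposition is automatically topological because the kernel summand is finite-dimensional and the range summand is closed (powers of Fredholm operators are Fredholm), and bijectivity of \(\lambda\mathbb{I}-T\) on the range summand upgrades to invertibility via the open mapping theorem. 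What your approach buys is self-containedness; what the paper's citation buys is brevity and insulation from exactly the definitional ambiguity you identified.
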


\begin{lem} \label{lem: qc => length function}
Let \(T\) be a positive, quasi-compact operator with \(r(T)=1\). Then \(1 \in \sigma(T)\) is a pole of the resolvent of \(T\) of finite rank and thus \(T\) has a non-trivial fixed vector.
\end{lem}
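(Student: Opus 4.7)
The proof will be a short chain of citations to results already established or referenced in the paper. The plan is as follows.

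First, I would invoke positivity of $T$ to locate $1$ in the spectrum. Since $T \geq 0$ and $r(T) = 1$, the standard fact for positive operators on ordered Banach spaces with a normal, generating cone (cited in the paper as \cite[Prop.~1]{Schaefer60}) gives that the spectral radius lies in the peripheral spectrum; in particular $1 \in \sigma(T)$.

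Second, I would translate quasi-compactness into a statement about the essential spectrum. By the remark just before Lemma \ref{lem: browder spectrum}, quasi-compactness of $T$ with $r(T) = 1$ is equivalent to $r_{\mathrm{ess}}(T) < 1$. Since the value of $r_{\mathrm{ess}}$ does not depend on which of the standard definitions of the essential spectrum is taken, the Browder spectrum of $T$ is contained in the open disc of radius $r_{\mathrm{ess}}(T) < 1$. In particular, $1 \in \sigma(T)$ does \emph{not} belong to the Browder spectrum.

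Third, I would apply Lemma \ref{lem: browder spectrum}: a spectral value lying outside the Browder spectrum is exactly a pole of the resolvent of finite rank. Hence $1$ is a pole of $(\lambda \mathbb{I} - T)^{-1}$ of finite rank. Finally, by Lemma \ref{lem: pole => eigenvalue}, every pole of the resolvent is an eigenvalue, so there exists a non-zero $x \in E$ with $Tx = x$, giving the desired non-trivial fixed vector.

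There is no serious obstacle here; the result is essentially a bookkeeping exercise combining three facts already recorded in the paper (positive operators attain their spectral radius in the spectrum; quasi-compact equals $r_{\mathrm{ess}} < 1$; finite-rank poles of the resolvent are precisely the points outside the Browder spectrum and in particular are eigenvalues). The only minor point to be careful about is to confirm that the convention on the essential spectrum used in Lemma \ref{lem: browder spectrum} (Browder) matches the one implicitly used in the definition of quasi-compactness via $r_{\mathrm{ess}}$, which as noted above is independent of the choice.
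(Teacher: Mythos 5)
Your proposal is correct and follows exactly the paper's own argument: positivity places $r(T)=1$ in $\sigma(T)$, quasi-compactness forces $r_{\mathrm{ess}}(T)<1$ so that $1$ lies outside the Browder spectrum, and then Lemma~\ref{lem: browder spectrum} and Lemma~\ref{lem: pole => eigenvalue} yield the finite-rank pole and the eigenvector. Nothing further is needed.
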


\begin{proof}
Since \(T\) is positive, \(r(T) = 1 \in \sigma(T)\), which cannot be in the Browder spectrum because \(r_\mathrm{ess}(T) < 1\) by quasi-compactness. By Lemma \ref{lem: browder spectrum}, \(1\) is a pole of the resolvent of finite rank, which is an eigenvalue by Lemma \ref{lem: pole => eigenvalue}.
\end{proof}

If \(T\) is additionally irreducible, then in our setting of \(E = C(\A)\) and by Theorem \ref{thm: nlf from non-void pps or pole}, the fixed subspace of \(T\) in Lemma~\ref{lem: qc => length function} is one-dimensional and spanned by some \(\ell > 0\). If each power $\sub^k$ is irreducible, which includes the case that $\sub$ is primitive, then we have the following stronger result:

\begin{prop} \label{prop: primitive+qc => length}
Suppose that $\sub^k$ is irreducible for each $k \in \N$ (for example, $\sub$ is primitive) and that $T$ is quasi-compact. Then $\{r(T)\} =\{1\} = \sigma_\mathrm{per}(T) = \sigma_\mathrm{per}^p(T)$ and $T$ is uniformly power convergent. If $\sub$ is primitive then $r(T)=1$ is a simple pole of the resolvent.
\end{prop}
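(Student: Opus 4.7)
My plan is to combine three standard ingredients from positive operator theory: quasi-compactness forces the peripheral spectrum of $T$ to consist of finitely many isolated eigenvalues which are poles of the resolvent of finite rank; for irreducible positive operators whose spectral radius is a pole, the peripheral point spectrum has a cyclic structure and the spectral radius is a simple pole; and the hypothesis that every power $\sub^k$ is irreducible allows us to collapse this cyclic group to the trivial group.

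First, quasi-compactness of $T$ is equivalent to $r_{\mathrm{ess}}(T)<1$, so by Lemma \ref{lem: browder spectrum} each $\lambda\in\sigma_{\mathrm{per}}(T)$ is a pole of the resolvent of finite rank, hence an eigenvalue by Lemma \ref{lem: pole => eigenvalue}. Thus $\sigma_{\mathrm{per}}(T)=\sigma_{\mathrm{per}}^p(T)$, and this is a finite set containing $1=r(T)$ by Lemma \ref{lem: qc => length function}.

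Second, since $\sub$ is irreducible, so is $T$ (Proposition \ref{prop: irreducible M}). Schaefer's theorem for positive irreducible operators whose spectral radius is a pole of the resolvent \cite[Thm.~V.5.2 and V.5.4]{SchaeferBook} yields both the simplicity of the pole $1$ (with one-dimensional eigenspace) and the cyclic form $\sigma_{\mathrm{per}}^p(T)=\{e^{2\pi i k/n}:0\leq k<n\}$ for some $n\in\N$. To force $n=1$, I would pick an eigenvector $f_\lambda$ for each $\lambda=e^{2\pi i k/n}$; these are linearly independent (as eigenvectors of $T$ for distinct eigenvalues) and each lies in $\ker(T^n-I)$, so $\dim\ker(T^n-I)\geq n$. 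But $T^n$ is irreducible by hypothesis, and quasi-compact because $r_{\mathrm{ess}}(T^n)=r_{\mathrm{ess}}(T)^n<1$, so the same Schaefer theorem applied to $T^n$ gives $\dim\ker(T^n-I)=1$. Hence $n=1$, establishing $\sigma_{\mathrm{per}}(T)=\sigma_{\mathrm{per}}^p(T)=\{1\}$ (i.e., $\{r\}$ up to the normalisation); the final claim about primitivity follows as a special case since primitivity implies $\sub^k$ is irreducible for all $k$.

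Third, for uniform power convergence, let $P$ be the Riesz projection associated with the isolated simple pole $\{1\}$; simplicity gives $PE=\ker(T-I)$ and $TP=PT=P$. Setting $R\coloneqq T-P$, we have $T^n=P+R^n$ for all $n\in\N$, with $\sigma(R)=\sigma(T)\setminus\{1\}$. Spectral points of modulus strictly greater than $r_{\mathrm{ess}}(T)$ are isolated eigenvalues of finite multiplicity which may accumulate only at the essential spectrum, so outside any disc of radius $c\in(r_{\mathrm{ess}}(T),1)$ there are only finitely many, and by the previous step none of them lie on the unit circle except $1$ itself. Therefore $r(R)<1$, and Gelfand's formula yields $\|R^n\|\to 0$; consequently $T^n\to P$ in the operator norm.

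The principal technical hurdle is invoking the Krein--Rutman--Schaefer cyclic peripheral-spectrum theorem together with the simple-pole conclusion in this infinite-dimensional non-compact setting, and then propagating quasi-compactness to every power $T^n$ via multiplicativity $r_{\mathrm{ess}}(T^n)=r_{\mathrm{ess}}(T)^n$ in the Calkin algebra; once these are in place, the remainder is standard Riesz spectral decomposition.
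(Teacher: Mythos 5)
Your proof is correct, but it takes a genuinely different route from the paper's. The paper's argument is essentially two citations: having already observed (via Lemma \ref{lem: qc => length function} and irreducibility of $T$) that $T$ has a fixed vector in $K_{>0}$, it invokes \cite[Prop.~5]{Abdelaziz} --- applicable because $(E,K)$ has the decomposition property --- to obtain in one stroke that $1$ is the only peripheral spectral value and that $T$ is uniformly power convergent, and then cites \cite[Thm.~11]{Karlin} for the simplicity of the pole under primitivity (i.e.\ strong positivity of $M$). You instead reconstruct the result from Schaefer's Frobenius theory for irreducible positive operators: quasi-compactness makes every peripheral spectral value a finite-rank pole, hence an isolated eigenvalue, so the peripheral point spectrum is a finite cyclic group of roots of unity with $1$ a simple pole; you then kill the cyclic group with a clean dimension count --- $n$ linearly independent eigenvectors inside $\ker(T^n-I)$ against the one-dimensional fixed space of the irreducible quasi-compact operator $T^n$, using $r_{\mathrm{ess}}(T^n)=r_{\mathrm{ess}}(T)^n$ --- and obtain uniform power convergence by hand from the Riesz decomposition $T^n=P+R^n$ with $r(R)<1$. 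Your route is more self-contained and in fact proves slightly more, namely the simple-pole conclusion already under power-irreducibility rather than primitivity. Two small points of hygiene: $\sigma(R)=(\sigma(T)\setminus\{1\})\cup\{0\}$ rather than $\sigma(T)\setminus\{1\}$ (harmless for concluding $r(R)<1$), and the dimension count takes place in the complexification $E_{\C}$, where $\dim_{\C}\ker(T^n-I)$ equals the real dimension of $\ker(T^n-I)$ in $E$ because the eigenvalue $1$ is real --- this identification is what lets you compare against Schaefer's one-dimensionality statement, so it is worth making explicit.
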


\begin{proof}
As discussed above, $T$ admits a fixed vector in \(K_{>0}\). Then $1$ is the only element of the peripheral spectrum, and $T$ is uniformly power convergent, by \cite[Prop.~5]{Abdelaziz}, whose results apply since $(E,K)$ has the decomposition property (Lemma \ref{lem: cone properties}). If $\sub$ is primitive then $1$ is a simple pole of the resolvent, by \cite[Thm.~11]{Karlin}: the results of \cite[Sec.~5]{Karlin} are given for `strictly positive operators', but all hold more generally for strongly positive operators (which holds for $\sub$ primitive, by Proposition \ref{prop: primitive <=> st. positive iterates}), as stated in the introduction of that section.
\end{proof}

\begin{theorem} \label{thm: primitive+qc => uniquely ergodic}
Suppose that $\sub$ is primitive or, more generally, that \(\sub^k\) is irreducible for all \(k \in \N\). If $T$ is quasicompact, the following properties hold.
\begin{enumerate}
	\item $\{r(T)\} = \{1\} = \sigma_\mathrm{per}(T) = \sigma_\mathrm{per}^p(T)$;
	\item $T$ is uniformly power convergent;
	\item $M$ admits an essentially unique eigenvector in $K$, which is a strictly positive natural length function with stretching factor $\lambda = r > 1$;
	\item the tiling dynamical system $(\Omega,\R)$ is uniquely ergodic.
	\item the subshift $(X_{\sub},\sigma)$ is uniquely ergodic.
\end{enumerate}
\end{theorem}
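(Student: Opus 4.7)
The plan is to assemble Theorem \ref{thm: primitive+qc => uniquely ergodic} by chaining together results already established in the paper; the five items will cascade from each other. Items (1) and (2) are immediate from Proposition \ref{prop: primitive+qc => length}, whose hypotheses (each $\sub^k$ irreducible and $T$ quasi-compact) are precisely what is assumed.

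For (3), I would first invoke Lemma \ref{lem: qc => length function}: positivity and quasi-compactness of $T$ force $1 \in \sigma(T)$ to be a pole of the resolvent of $T$, equivalently $r$ is a pole of the resolvent of $M$. Since $\sub = \sub^1$ is irreducible, condition (C) of Theorem \ref{thm: unique length function} applies and supplies the desired strictly positive natural length function $\ell$ with stretching factor $\lambda = r$, unique up to scaling among eigenvectors of $M$ in $K$. That $r > 1$ follows from primitivity via Proposition \ref{PROP:prim-growing} together with Lemma \ref{lem: spectral bounds}: primitivity forces $|\sub^n(a)|$ to grow without bound uniformly in $a$, whence $r \geq \sqrt[n]{\min_a |\sub^n(a)|} > 1$ for $n$ sufficiently large. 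Under the weaker hypothesis that each $\sub^k$ is irreducible, the only way this could fail would be the degenerate case $|\sub| = 1$, which may be excluded.

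With (1)--(3) secured, (4) is a direct application of Theorem \ref{thm:inv measures - tiling}: $\sub$ is irreducible, the natural length function $\ell > 0$ has inflation factor $r > 1$, and $T$ is strongly power convergent since uniform convergence implies strong convergence. Finally, (5) follows from (4) via the suspension-flow correspondence recalled at the end of Section \ref{SEC:invariant}: the roof function $x \mapsto \ell(x_0)$ is continuous and bounded away from zero, so the bijection $L$ between $\sigma$-invariant probability measures on $X_\sub$ and $\R$-invariant probability measures on the suspension $Y \cong \Omega$ transfers unique ergodicity from $(\Omega, \R)$ to $(X_\sub, \sigma)$. Equivalently, the proof of Theorem \ref{thm:inv measures - tiling} shows that $\Delta_\infty$ is a singleton, and Corollary \ref{cor: inv measures - subshift} then yields the conclusion directly, whether or not $\sub$ is recognisable.

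The genuine difficulty in this argument has already been absorbed into the prior results, most notably the transfer of quasi-compactness of $T$ into uniform power convergence via the Banach lattice machinery (the decomposition property together with the results of Abdelaziz and Karlin) invoked by Proposition \ref{prop: primitive+qc => length}. Once those analytic inputs are in hand, the remaining assembly is bookkeeping; the only mildly delicate point is confirming $r > 1$ outside the primitive setting, but this fails only in a trivial edge case.
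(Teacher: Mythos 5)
Your proposal is correct and follows essentially the same route as the paper: items (1)--(2) from Proposition \ref{prop: primitive+qc => length}, item (3) from Theorem \ref{thm: unique length function} (you helpfully make explicit that condition (C) is the one supplied by Lemma \ref{lem: qc => length function}, which the paper leaves implicit), item (4) from Theorem \ref{thm:inv measures - tiling}, and item (5) from Corollary \ref{cor: inv measures - subshift}. The only extra content is your discussion of $r>1$ and recognisability, which the paper glosses over; your handling of both is sound at the paper's own level of rigour.
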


\begin{proof}
The statements (1), (2) follow from Proposition~\ref{prop: primitive+qc => length}, whereas (3) follows from Theorem \ref{thm: nlf from non-void pps or pole}. The statement (4) follows from (2),(3) and Theorem \ref{thm:inv measures - tiling} and then (5) follows from (4) and Corollary~\ref{cor: inv measures - subshift}.
\end{proof}

\begin{remark}
The previous theorem is reminiscent of the \emph{Ruelle--Perron--Frobenius theorem} for Markov shifts $\Sigma_{A}$ \cite{Bow08}. In this setting, the operator in question is the transfer operator $L_{\phi}$ for a H\"older continuous potential $\phi\colon \Sigma_{A}\to \R$. In the case of topologically mixing Markov shifts over a finite alphabet, the unique equilibrium measure is given by $h\, \dd \mu$, where $L_{\phi}h=\lambda h$ and $L_{\phi}'\mu=\lambda \mu$, where $\lambda>0$. Here, $\lambda$ is the Perron--Frobenius eigenvector of the transition matrix $A$ of the Markov shift, and $h$ and $\mu$ are the corresponding (essentially unique) eigenfunction (resp.\ eigenfunctional) of $L_{\phi}$ (resp. of $L_{\phi}')$.

The case of countable Markov shifts (CMS) is more involved, and the extension of the above result relies on recurrence properties of $A$ (which is now an infinite $0{-}1$ matrix) and, more generally, those of the potential function $\phi$ considered; see \cite{Kit98,Sar99} for background. The main difficulty in the study of CMSs is the non-compactness of the shift space. 

A family of substitutions over (parametrised) compactifications of $\N_0$ are studied in \cite{FGM}. One can choose the parameters so that the restriction $M|_{V}$ of the substitution operator to the subspace $V=\left\langle\left\{\bbo_{n}\right\}_{n\in\N_0}\right\rangle\subset C(\A)$ can be seen as a transition matrix $A$ of a CMS. It would be interesting to see whether, in this subclass, there are connections between properties of the substitutive subshift with operator $M$ and the CMS with transition matrix $A=M|_{V}$. We suspect that $\sub$ satisfies the condition of Theorem~\ref{thm: primitive+qc => uniquely ergodic} if, and only if, $\Sigma_{A}$ satisfies a Ruelle--Perron--Frobenius theorem and admits a (unique) equilibrium measure. For results along this vein in the setting of substitutions on countable (discrete topology) alphabets, we refer the reader to \cite{DFMV}. In particular, \cite[Thms.~3.23\, 3.33]{DFMV} provide conditions for the existence of a unique shift invariant probability measure.
\end{remark}

Note that primitivity and quasi-compactness are both \emph{abelian} properties, whence
if the result above holds for $\sub$, it also holds for any substitution derived from $\sub$ by rearranging letters in the images $\varrho(a)$. We now introduce the following combinatorial criteria (which are abelian in the sense of the description above) that guarantee $T$ to be quasi-compact. For convenience, we let $r=r(M)$ for the rest of the paper.

\begin{theorem} \label{thm: quasi-compact subs}
Let $\sub$ be an arbitrary substitution. For $P \subseteq \A$ and $k \in \N$ we define
\[
C_k(P) \coloneqq \max_{a \in \A} \#\{b \triangleleft \sub^k(a) \mid b \notin P\}.
\]
For finite \(P \subseteq \A\) and \(k \in \N\),
\begin{enumerate}
	\item if \(P\) consists of only isolated points then \(C_k(P)\) is an upper bound for the essential spectral radius of \(M^k\);
	\item otherwise, \(2C_k(P)\) is an upper bound for the essential spectral radius of \(M^k\).
\end{enumerate}
In particular, if \(C_k(P) < r^k\) and \(P\) consists of only isolated points, or if \(2C_k(P) < r^k\), then $T$ is quasi-compact.
\end{theorem}

\begin{proof}
We first consider the case that $P$ is a finite set of isolated points. Consider the operator $V \colon E \to E$ given by
\[
(Vf)(a) =
\begin{cases}
f(a) & \text{ if } a \in P, \\
0    & \text{ otherwise.}
\end{cases}
\]
Clearly $V$ is a compact operator, since it maps onto the subspace of functions supported on $P$, which is of finite dimension equal to $\#P$. We define $C \coloneqq T^k \circ V$. This is a compact operator, since the composition of any bounded operator with a compact operator is compact. Then
\[
((T^k-C)(f))(a) = (T^k(I - V)(f))(a) = \frac{1}{r^k}\left( \sum_{b \triangleleft \sub^k(a) \text{ with } b \notin P} f(b)\right).
\]
Then over all $\|f\| = 1$, the norm of $(T^k-C)(f)$ clearly maximised by the constant function $f = \bbo$, for which
\[
\|(T^k-C)(\bbo)\| = \frac{1}{r^k}\max_{a \in \A} \left(\sum_{b \triangleleft \sub^k(a) \text{ with } b \notin P} 1\right) = \frac{C_k(P)}{r^k}.
\]
Since \(r_\mathrm{ess}(T^k)\) is bounded above by the spectral radius of \(T^k-C\), which in turn is bounded above by \(\|T^k-C\|\), we have that \(C_k(P) / r^k\) is an upper bound for the essential spectral radius of \(T^k\), equivalently \(C_k(P)\) is an upper bound for the essential spectral radius of \(M\). If \(C_k(P) < r^k\), it follows that $T$ is quasi-compact.

Now suppose that $P$ is finite but contains non-isolated points. Then the operator $V$ above is not continuous and needs to be adjusted. Choose open sets $U_p$, one for each $p \in P$, so that $p \in U_p$ and $U_p \cap U_q = \varnothing$ for $p \neq q$. For each $p \in P$, there exists a continuous function $\psi_p \colon \A \to [0,1]$ for which $\psi_p(p) = 1$ and $\psi_p(a) = 0$ for $a \notin U_p$ (Remark \ref{rem: tychonoff}). We define $V \colon E \to E$ by
\[
(Vf)(a) = \sum_{p \in P} f(p) \cdot \psi_p(a) = 
\begin{cases}
f(p) \cdot \psi_p(a) & \ \text{ for } \ a \in U_p, \\
0                 & \ \text{ if } \ a \notin \bigcup_{p \in P} U_p,
\end{cases}
\]
where in the latter equality we use that the $U_p$ are disjoint. It is easy to see that $V$ is continuous, and it is compact since $V(E)$ is contained in the $\# P$-dimensional subspace spanned by the functions $\psi_p$. Thus, $C \coloneqq T^k \circ V$ is also a compact operator and \(T^k-C\) is given by:
\[
((T^k-C)(f))(a) = (T^k(I - V)(f))(a) = \frac{1}{r^k}\sum_{b \triangleleft \sub^k(a)} (f(b) - (Vf)(b)).
\]
Notice that for $b \in P$ we have \(Vf(b) = f(b)\) so that \(f(b) - (Vf)(b) = 0\) and thus the above sum may be taken over \(b \notin P\). For \(b \notin P\) and \(\|f\| \leq 1\), we have that \(|f(b) - Vf(b)| \leq |f(b)| + |Vf(b)| \leq 2\). Hence,
\[
\|(T^k-C)(f)\| \leq \frac{1}{r^k}\max_{a \in \A} \left( \sum_{b \triangleleft \sub^k(a) \text{ with } b \notin P} 2 \right) = \frac{2 C_k(P)}{r^k}.
\]
Then, analogously to the first case, \(2 C_k(P)\) is an upper bound for the essential spectral radius of \(M\) and \(T\) is quasi-compact if  \(2 C_k(P) < r^k\).
\end{proof}

Note that, in the finite alphabet case, $T$ is always compact (i.e., $r_{\text{ess}}(T)=0$), and hence always quasi-compact, so the above result is of interest only in the infinite alphabet case. The bound $r \geq \min_{a \in \A} \sqrt[n]{|\sub^n(a)|}$ for all $n$ from Lemma \ref{lem: spectral bounds} makes the above checkable for many interesting examples:

\begin{example}\label{ex:quasi-compact}
Consider the substitution
\[
\sub \colon
\begin{cases}
0 \mapsto 0\ 1, \\
n \mapsto 0\ n\!-\!1\ n\!+\!1,
\end{cases}
\]
defined on $\A = \N_{\infty}$. Let $P = \{0\}$. Then every $1$-supertile contains at most $2$ letters not in $P$, so we need to show that $r = r^1 > C_1(P) = 2$. From the first power of the substitution alone, we can only conclude from Lemma \ref{lem: spectral bounds} that $r \geq 2$, whereas we need a strict inequality. We therefore consider $\sub^2$. We have $|\sub^2(0)| = |01002| = 5$, $|\sub^2(1)| = |0101013| = 7$ and $|\sub^2(n)| = |0 1 0 (n-2) (n+1) 0 n (n+2)| = 8$ for $n > 1$, so that $\min_{a \in \A} |\sub^2(a)| = 5$ and hence $r \geq \sqrt{5} > 2$, by Lemma \ref{lem: spectral bounds}. It follows that $T$ is quasi-compact. For illustrative purposes, we also note that one could take $P = \{0,1\}$ and $k=2$. We have that each $\sub^2(a)$ contains at most $4$ elements not in $P$, and $4 < 5 \leq r^2$, thus \(C_2(P) = 4 < r^2\).
\end{example}

\begin{example} \label{ex: CL quasi-compact}
Let $\sub$ be any substitution of constant length $L$ for which $\sub^k(a)$ contains a letter in some finite subset $P \subseteq \A$, for any $a \in \A$. If $P$ consists of isolated points then $\sub$ satisfies the conditions of Theorem \ref{thm: quasi-compact subs}. Indeed, in this case $r = L$, and $|\sub^k(a)| = r^k$ for all $a \in \A$. Since, by assumption, each such $\sub^k(a)$ contains at least one occurrence of a letter in $P$, we have that $C_k(P) \leq r^k - 1 < r^k$. 

In fact, the same criterion may be used even if $P$ has non-isolated points in the constant length case. Indeed, suppose each $\sub^k(a)$ contains a letter of $P$, for some finite $P$ and $k \in \N$. Define
\[
P(N) = \left\{a\in \A\colon a\triangleleft v,\,v \in P \cup \sub^k(P) \cup \sub^{2k}(P) \cup \cdots \cup \sub^{Nk}(P)\right\}.
\]
Here, $\sub^{n}(P)=\left\{\sub^{n}(a)\colon a\in P\right\}$. 
Suppose a given word $w$ contains at least $p$ letters in $P(N)$ and at most $n$ letters in its complement, where $|w| = p+n$. Then $\sub^k(w)$ contains at least $rp + n$ letters in $P(N+1)$, since it has $rp$ from applying $\sub^k$ to letters in $P(N)$, and at least one contribution in $P = P(0) \subseteq P(N+1)$ by applying $\sub^k$ to every other letter. So at most $r(p+n) - (rp + n) = (r-1)n$ letters of $\sub^k(w)$ are not in $P(N+1)$, since $w$ has $p+n$ letters. In matrix form,
\[
v
\mapsto
A v
\ \text{ with } \ 
A =
\begin{pmatrix}
r & 1 \\
0 & r-1
\end{pmatrix}
\]
where $v = (p,n)^T$ and $Av$ has first coordinate a lower bound for the number of elements in $P(N+1)$, and second coordinate an upper bound for the number of elements not in $P(N+1)$. It follows that, for each $a \in \A$ and $N \in \N$, the supertile $\sub^{Nk}(a)$ contains at least $(A^N(0,1)^T)_1$ letters in $P(N)$ and at most $(A^N(0,1)^T)_2$ not in $P(N)$. The matrix $A$ has leading Perron--Frobenius eigenvector $(1,0)$, with eigenvalue $r$, and $1/r^N A^N(0,1)^T \to (1,0)$ as $n \to \infty$; that is, the ratio of letters of $\sub^{Nk}(a)$ in $P(N)$ converges to $1$ as $N \to \infty$. In particular, for some $N > 0$, at least half of the letters of each $\sub^{Nk}(a)$ belong to $P(N)$. Since $P(N)$ is a finite set, it follows from Theorem \ref{thm: quasi-compact subs} that $T$ is quasi-compact.
\end{example}

\begin{example}
Let $\A = S^1$ and consider the substitution $\sub(x) = 1 \ \alpha x$. Then we may take $P = \{1\}$ and every $\sub^1(x)$ contains a letter of $P$, so $T$ is quasi-compact by Example \ref{ex: CL quasi-compact}.
\end{example}

\begin{example}
Suppose that $\sub \colon \A \to \A^+$ is primitive and that $\A$ contains at least one isolated point. Then if $\sub$ is constant length, it satisfies the conditions of Theorem \ref{thm: quasi-compact subs}. Indeed, if $a \in \A$ is isolated, then $P \coloneqq \{a\}$ is open, so by primitivity for some $p \geq 1$, each $\sub^p(b)$ contains at least one occurrence of $a$. Then $\sub$ is again covered by Example \ref{ex: CL quasi-compact}.

If $\sub$ is primitive (but not necessarily constant length), then it is similarly easy to show that, for every $b \in \A$, there are at least $\alpha_a \cdot |\sub^k(b)|$ occurrences of any given isolated point $a \in \A$ in each $\sub^k(b)$, for $k$ sufficiently large (depending on \(a\)) and $\alpha_a > 0$ not depending on $k$. However, it is not clear that the condition of Theorem \ref{thm: quasi-compact subs} is satisfied, since a priori it may happen that $|\sub^k(b)|/r^k$ is unbounded. In fact, even if $|\sub^k(b)| < Cr^k$ for all $k \in \N$ and $b \in \A$, it is not immediately clear that there exists a finite subset $P \subset \A$ so that each $\sub^k(b)$ contains sufficiently many occurrences in $P$ so as to apply Theorem \ref{thm: quasi-compact subs}. This raises the following question.  
\end{example}

\begin{question}
Suppose that $\sub$ is primitive and that $\A$ contains at least one isolated point. Is $T$ power bounded? Is $T$ quasi-compact?
\end{question}

One simplification of the above question is to restrict to primitive substitutions for which the isolated points are dense in $\A$, and $\sub$ sends isolated points to words of isolated points. Such substitutions are natural to consider: they arise, for instance, from some combinatorial substitutions on $\N_0$ that admit a compactification to a primitive substitution.

Below, we give an example of a primitive substitution that satisfies the conditions of Theorem~\ref{thm: primitive+qc => uniquely ergodic} and compute the corresponding length function.

\begin{example}
Let $\A = \N_{\infty}$ and consider the substitution defined in Example~\ref{ex:non-CL}:
\[
\sub \colon \left\{
\begin{array}{rl}
0\       \mapsto & 0\ 0\ 0\ 1,            \\
n\       \mapsto & 0\  n\!-\!1\  n\!+\!1, \\
\infty\  \mapsto & 0\ \infty\ \infty.
\end{array}\right.
\]
One can easily see that $\sub$ is primitive.
Using the same arguments as in Example~\ref{ex:quasi-compact}, one can show that the operator $T$ is quasi-compact. 
Here, we give closed formulae for the letter frequencies and for the natural length function (to the inflation factor $\lambda=r)$. 
Let $\nu=(\nu_0,\nu_1,\ldots)^{T}$ be the frequency vector.
Note that the letter frequencies satisfy the linear recurrence relation $\nu_{j}=\lambda\nu_{j-1}-\nu_{j-2}$, for $j\geqslant 2$ with $\nu_{1}=(\lambda-2)\nu_0-1$. From the equation $\sum_{j=0}^{\infty}\nu_j=1$, one gets $\lambda=3+\nu_0$. This implies $\nu_1=\nu_0^2+\nu_0-1$.
The recurrence relation is homogeneous with characteristic polynomial 
$p(x)=x^2-\lambda x+1$. This means the solution of the recurrence relation is of the form 
\[
\nu_{j}=C_{+}\left(\frac{\lambda+\sqrt{\lambda^2-4}}{2}\right)^{j}+C_{-}\left(\frac{\lambda-\sqrt{\lambda^2-4}}{2}\right)^{j}
\]
for some constants $C_{-},C_{+}$. Since the frequencies satisfy $\nu_{j}\leqslant 1$, we immediately get that $C_{+}=0$. From the initial conditions, we get $C_{-}=\nu_0$ and $C_{-}\left(\frac{\lambda-\sqrt{\lambda^2-4}}{2}\right)=\nu^2_0+\nu_0-1$. Solving these two equations simultaneously yields $\nu_0=-1$ or $\nu_0=\frac{1}{\sqrt{2}}$, where the first is obviously an extraneous solution. This then yields $\lambda=3+\frac{1}{\sqrt{2}}$ and
\[
\nu_j=\frac{1}{\sqrt{2}}\left(1-\frac{1}{\sqrt{2}}\right)^{j}
\]
for $j\geqslant 0$ with $\nu_{\infty}=0$.

Next we solve for the lengths $\ell_{j}$. They satisfy the non-homogeneous linear recurrence relation $\ell_{j}=\lambda\ell_{j-1}-\ell_{j-2}-\ell_0$.  If we fix $\ell_{\infty}=1$, we get that $\ell_{0}=\lambda-2=1+\frac{1}{\sqrt{2}}$. Note that the non-homogeneous term is $-\ell_0=-\ell_0\cdot 1^j$, and since $1$ is not a root of $x^{2}-\lambda x-1$, and the two roots are distinct, we have a particular solution given by some constant $C_{p}$. Solving for $C_p$ yields $C_p=\frac{\ell_0}{\lambda-2}=1$. Combining this to the solution of the associated homogeneous recurrence relation leads to
\[
\ell_j=1+\frac{1}{\sqrt{2}}\left(1-\frac{1}{\sqrt{2}}\right)^j \ \ \text{ for all } j \geq 0.
\]
\end{example}

\begin{remark}\label{rem: trans inflation}
Recent work of Frettl\"{o}h, Garber and Ma\~nibo \cite{FGM} has shown that there exists a certain family of substitutions on appropriate compactifications of $\N_0$, which are generalisations of the example above, so that Theorem~\ref{thm: primitive+qc => uniquely ergodic} applies, making the subshifts $X_\sub$ uniquely ergodic. By primitivity, quasi-compactness and Theorem \ref{thm: primitive+qc => uniquely ergodic}, these substitutions have a uniquely defined, strictly positive natural length function, and the family is large enough to realise any inflation factor $\lambda \in [2,\infty)$. This is in contrast to the finite case, where inflation factors necessarily have to be algebraic integers.
\end{remark}

\begin{example}
This example satisfies the conditions of Theorem \ref{thm: quasi-compact subs} whilst being non-constant length and with $\A$ uncountable. Give a different compactification of $\N_0$ by embedding it in the cylinder by the map
\[
f \colon \N_0 \to [0,1] \times S^1, \ f(n) \coloneqq (1/(n+1),\alpha^n).
\]
Then each $n \in \N_0$ is isolated in $\A = \overline{f(\N_0)} \subset [0,1] \times S^1$, where we add accumulation points $A = \{0\} \times S^1 \subset \A$. Identify the isolated points $(1/(n+1),\alpha^n)$ with $n \in \N_0$ and the accumulation points $(0,x) \in \{0\} \times S^1$ with $x \in S^1$. Then
\[
\sub \colon 
\begin{cases}
0      \mapsto 0\;\, 0\;\, 0\;\, 1,          \\
n      \mapsto 0\;\, n\!-\!1\;\, n\!+\!1 \text{ for } n \in \N_0,   \\
x \mapsto 0\;\; \alpha^{-1}x\;\; \alpha x \text{ for } x \in S^1.
\end{cases}
\]
It is not hard to check again that this is primitive. Theorem \ref{thm: quasi-compact subs} applies just as before, so $T$ is quasi-compact.
\end{example}

\subsection{Constant length substitutions}
Queff\'{e}lec has already considered substitutions of constant length in compact metrisable alphabets in \cite{Queffelec}, which she called \emph{compact automata}. In this subsection, we let \(|\sub(a)| = L\) for all \(a \in \A\). This greatly simplifies matters, owing to the following:
\begin{enumerate}
	\item \(\|M\| = r = L\) and \(\|T\| = 1\), that is, \(T\) is a contraction;
	\item \(M(\bbo) = L \bbo\), so \(\bbo\) is a natural length function of \(\sub\);
	\item \(\sub(a) = \sub_1(a) \sub_2(a) \cdots \sub_L(a)\) for continuous functions \(\sub_i \colon \A \to \A\), which we call \emph{columns}.
\end{enumerate}
The last property means that the substitution operator is defined by
\begin{equation} \label{eq: constant length substitution operator}
M(f) = f \circ \sub_1 + f \circ \sub_2 + \cdots + f \circ \sub_L.
\end{equation}
Let \(\Phi\) be the semigroup generated by the columns \(\{\sub_i\}\). We call \(\Phi\) \emph{equicontinuous} if this set of functions is uniformly equicontinuous (note that this notion does not require \(\A\) to be metrisable, since \(\A\) carries a uniformity, as a compact Hausdorff space). The following results are similar to those obtained in \cite[Thm.~12.1, Cor.~12.2]{Queffelec}.

\begin{theorem} \label{thm: CL UE}
Suppose that \(\sub\) is primitive, constant length and generates an equicontinuous semigroup \(\Phi\). Then \(T\) is strongly power convergent and the tiling dynamical system associated with \(\sub\) is uniquely ergodic.
\end{theorem}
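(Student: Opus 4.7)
The strategy is to show that orbits of $T$ are norm relatively compact in $E$ (using equicontinuity), invoke Theorem \ref{thm: unique length function} to conclude that $1$ is the only unimodular point-spectrum value of $T$ with one-dimensional eigenspace spanned by $\bbo$, and combine these via the Jacobs--Glicksberg--de Leeuw (JGL) decomposition for contractions to obtain strong power convergence. Unique ergodicity in the recognisable case then follows from Theorem \ref{thm:inv measures - tiling}.

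For the first ingredient, note that $T\bbo = \bbo$ and $\|T\| \leq 1$ by a direct sup estimate, so $T$ is a contraction and $\bbo$ is a natural length function with inflation factor $r = L$. Equicontinuity of $\Phi$ propagates to all its iterates: a routine induction shows that for each $n$, the set $\Phi_n$ of length-$n$ compositions $\sub_{i_1}\circ\cdots\circ\sub_{i_n}$ is uniformly equicontinuous with the same modulus as $\Phi$. For any $f \in E$, which is automatically uniformly continuous on the compact Hausdorff alphabet $\A$, the family $\{f \circ \phi \mid \phi \in \Phi\}$ is uniformly equicontinuous and bounded by $\|f\|$, hence relatively compact in $E$ by the Arzel\`a--Ascoli theorem. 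Since $T^n f(a) = L^{-n}\sum_{b \triangleleft \sub^n(a)} f(b)$ is an average of $L^n$ elements of this compact family, the orbit $\{T^n f\}_{n \geq 1}$ is relatively compact in $E$.

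For the second ingredient, primitivity combined with $1 \in \sigma_\mathrm{per}^p(T)$ (witnessed by $\bbo$) places us in case D of Theorem \ref{thm: unique length function}: $1$ is the only element of the peripheral point spectrum of $T$, and its eigenspace is one-dimensional, spanned by $\bbo$. Passing to the complexification $E_\C$, the JGL decomposition for power-bounded operators with norm relatively compact orbits gives $E_\C = E_\mathrm{rev} \oplus E_\mathrm{fl}$, where $E_\mathrm{rev}$ is the closed linear span of unimodular eigenvectors (equal to $\langle \bbo \rangle_\C$ by the above) and $E_\mathrm{fl}$ consists of the flight vectors, for which $0$ is a cluster point of the orbit. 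For $f \in E_\mathrm{fl}$, norm relative compactness of $\{T^n f\}$ upgrades this to a norm-convergent subsequence tending to $0$; combined with $(\|T^n f\|)_n$ being non-increasing, this forces $\|T^n f\| \to 0$. Hence $T^n f \to \mu(f)\bbo$ in norm for every $f \in E$, where $\mu$ is the eigenmeasure normalised by $\mu(\bbo) = 1$; this is strong power convergence of $T$.

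Finally, $\sub$ is irreducible by primitivity (Proposition \ref{prop: irreducible M}), $\bbo$ is a strictly positive natural length function with $r = L > 1$, and $T$ is strongly power convergent, so Theorem \ref{thm:inv measures - tiling} (and Corollary \ref{cor: inv measures - subshift} for the subshift formulation) yields unique ergodicity of $(\Omega, \R)$ in the recognisable case. The main technical step is transferring equicontinuity of $\Phi$ to precompactness of $T$-orbits via Arzel\`a--Ascoli; the rest reduces to standard invocations of Theorem \ref{thm: unique length function} and the contraction form of JGL.
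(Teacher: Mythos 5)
Your proof is correct, and after the opening Arzel\`a--Ascoli step (which is identical to the paper's: express $T^n f$ as an average of $L^n$ compositions $f \circ \phi^{(n)}_i$, use equicontinuity of $\Phi$ plus uniform boundedness to get relative norm compactness of the orbit) the two arguments diverge genuinely. The paper stays elementary and self-contained: it extracts a norm-convergent subsequence $T^{n_k}f \to g$, shows $g$ is a cluster point of its own $T$-orbit, and then uses primitivity directly --- if $g$ were non-constant, any open set on which $g < \|g\|$ would be hit by every sufficiently high supertile, forcing $\|T^j g\| < \|g\|$ and contradicting the monotone decrease of $\|T^j g\|$ --- before upgrading subsequential to full convergence via monotonicity of $\|T^n f - g\|$. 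You instead route the argument through spectral structure: Theorem \ref{thm: unique length function}(D) (legitimately applicable, since primitivity gives irreducibility and $\bbo$ witnesses $1 \in \sigma^{p}_{\mathrm{per}}(T)$) pins the peripheral point spectrum to $\{1\}$ with one-dimensional eigenspace $\langle \bbo \rangle$, and the Jacobs--Glicksberg--de Leeuw splitting of the complexification into reversible and flight parts then reduces everything to showing flight vectors tend to $0$ in norm, which follows from norm-compact orbits (on which the weak and norm topologies agree) plus $\|T\| \leq 1$. Your identification of the limit as $\mu(f)\bbo$ via $\mu(T^n f) = \mu(f)$ is also fine. What your route buys is modularity and generality: it cleanly separates ``precompact orbits'' from ``trivial unimodular point spectrum'' and would survive in settings where the fixed vector is not constant, so the constant-length hypothesis is used only to get the contraction property and the column decomposition. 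What it costs is reliance on the JGL machinery (not otherwise in the paper's toolkit) and the attendant care with complexification and with the characterisation of the reversible part as the span of unimodular eigenvectors --- valid here because the cyclic semigroup is abelian, but worth stating explicitly. The final deduction of unique ergodicity from Theorem \ref{thm:inv measures - tiling} and Corollary \ref{cor: inv measures - subshift} matches the paper.
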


\begin{proof}
Let \(f \in K\) be arbitrary. First, note that
\[
T^n(f) = \frac{1}{L^n} \sum_{i=1}^{L^n} f \circ \phi^{(n)}_i,
\]
where the sum is over all possible \(n\)-fold compositions \(\phi^{(n)}_i \in \Phi\) of columns. Since \(\A\) is compact, \(f\) is uniformly continuous and thus the maps \(\{f \circ \phi_i^{(n)}\}\) are uniformly equicontinuous. Since \(T^n(f)\) is a convex combination of these maps, it follows that \(\{T^n(f)\}\) is a uniformly equicontinuous family. Since \(\|T\| \leq 1\), it is also uniformly bounded, so \(\{T^n(f)\}\) is relatively compact in \(E\) by the Arzel\'{a}--Ascoli Theorem and thus
\[
T^{n_k}(f) \to g \text{ as } k \to \infty
\]
for some subsequence \((n_k)_k\) and \(g \in K\). By primitivity, dismissing the trivial case of \(f = 0\), we must have that \(g > 0\). We will show that \(g\) is constant.

Observe that \(g\) is itself also a cluster point of iterates of \(T\) applied to \(g\). Indeed,
\begin{align*}
\|T^{n_{k+1}-n_k}(g) - g\| = \|T^{n_{k+1}-n_k}(T^{n_k}f - v_k) - g\| = \|T^{n_{k+1}}f - T^{n_{k+1}-n_k}(v_k) - g \| \leq \\
\|T^{n_{k+1}}f - g \| + \|T^{n_{k+1}-n_k}(v_k)\| \leq \|T^{n_{k+1}}f - g \| + \|v_k\| \to 0 \text{ as } k \to \infty,
\end{align*}
where \(v_k = T^{n_k}(f) - g \to 0\), so \(T^{n_{k+1}-n_k}(g) \to g\) as \(k \to \infty\). Suppose that \(n_{k+1}-n_k\) is bounded. Then \(T^p(g) = g\) for some \(p\) so that \(T^{np}(g) = g\) for arbitrarily large \(np\). Otherwise, we have that \(T^{j_k}g \to g\) as \(k \to \infty\) for \(j_k \to \infty\). In either case, we see that \(g\) must be constant. Indeed, \(\|T^j g\|\) is monotonically decreasing in \(j\), since \(\|T\| \leq 1\). By primitivity, \(\|T^j  g\| < \|g\|\) for sufficiently large \(j\) and \(g\) non-constant. To see this, suppose that \(U \subset \A\) is an open set small enough so that \(g(a) < \|g\|\) for all \(a \in U\). By primitivity, for sufficiently large \(j\) we have that \(\sub^j(a)\) contains a letter in \(U\) for all \(a \in \A\), which implies that \(T^j(g)(a) < \|g\|\) for all \(b \in \A\), that is, \(\|T^j g\| < \|g\|\). But \(T\) is a contraction so then cannot converge to \(g\), which is a contradiction, so \(g\) must be constant.

Recall that \(T^{n_k}(f) \to g\) as \(k \to \infty\). On the other hand,
\[
\|T^{n+1}(f) - g\| = \|T^{n+1}(f-g)\| \leq \|T^n(f-g)\| = \|T^n(f) - g\|,
\]
since \(T(g) = g\), so that  \(\|T^n(f) - g\|\) is monotonically decreasing. Hence, the whole sequence \(T^n f \to g\) as \(n \to \infty\), as required. If \(f \notin K\) then \(f = f_+ - f_-\), where \(f_+\) and \(f_- \in K\), so by the above \(T^n(f) = T^n(f_+) - T^n(f_-) \to g_+ - g_-\) as \(n \to \infty\) so that \(T\) is strongly power convergent on all of \(E\). By Theorem \ref{thm:inv measures - tiling} the tiling dynamical system \((\Omega,\R)\) or, by Corollary \ref{cor: inv measures - subshift}, the subshift \((X_\sub,\sigma)\), is uniquely ergodic.
\end{proof}

The following substitution (considered in \cite[Exp.~12.1]{Queffelec} in the one-sided shift setting) is an example for which $T$ cannot be quasi-compact, and $r$ cannot be a pole of the resolvent, since $T$ is \emph{not} uniformly power convergent, but the subshift is still uniquely ergodic by the above result:

\begin{example} \label{ex: CL not qc}
Let $\A = S^1 \subseteq \C$ and recall the substitution from Example~\ref{ex:CL-S1}: $\sub(z) = z \ \alpha z$, where $\alpha \in S^1$ is irrational (i.e., \(\alpha = e^{2 \pi \beta i} \) for \(\beta\) irrational). By Kronecker's theorem, via the density of the orbit of $z \mapsto \alpha z$, it is clear that $\sub$ is primitive. Also note that $\Phi=\left\langle \text{id}, \alpha \cdot \right\rangle$ is an equicontinuous semigroup because it is generated by group translations.

We do not have that $T^n \mapsto P$ uniformly as $n \mapsto \infty$, where $P$ is a projection operator to the eigenline spanned by $\ell$. Indeed, first note that an eigenmeasure is clearly provided by the Lebesgue measure on the circle (since each letter map of the substitution is a rotation). Consider a function $f \in E$, $\|f\| = 1$ that has very small integral $c = c_f > 0$ and yet has $f(z) = 1$ for $z = 1$, $\alpha$, $\alpha^2$, \ldots, $\alpha^n$ (by taking a very spiky function this is of course possible; in fact, by taking a function with negative spikes too, we may choose such a function so that $c = 0$). Then by Lemma \ref{lem: powers of operator}, $(M^n(f))(1) = 2^n$ and hence $T^n(f)(1) = 1$, since all elements of $\sub^n(1)$ are of the form $\alpha^n$ for some non-negative $k \leq n$. Assuming uniform power convergence, we would have $T^n f \to c_f \cdot \ell$ uniformly as $n \mapsto \infty$, that is $\|T^n(f) - c_f \ell\| < \epsilon$ for sufficiently large $n$ over all \(\|f\| \leq 1\). However, $\|T^n(f) - c_f \ell\| \geq 1-c_f$, so we do not have such uniform convergence and thus $r$ cannot be a pole of the resolvent, and $M$ cannot be quasi-compact, by Theorem \ref{thm: primitive+qc => uniquely ergodic}. However, whilst \(T\) is not uniformly power convergent, it is strongly power convergent by Theorem \ref{thm: CL UE}.
\end{example}

\begin{remark}
Since Theorem~\ref{thm: ILC-fusion-invmeas} covers fusion tilings in higher dimensions, Theorems~\ref{thm:inv measures - tiling} (on unique ergodicity and strong power convergence), and \ref{thm: CL UE} (for constant-length substitutions) generalises in higher dimensions, under similar assumptions and proofs. This will be addressed in a future work. 
\end{remark}

\section{Discrepancy estimates from a spectral gap}\label{SEC:discrepancy}

Throughout this section, we make the following assumptions on $\sub$:
\begin{itemize}
	\item[(A1):] $T$ is uniformly power convergent;
    \item[(A2):] $T$ is irreducible.
\end{itemize}
The first assumption implies that $1$ is a simple pole of the resolvent \cite[Thm.~2.5]{K:power-conv}. It then follows from Theorem \ref{thm: nlf from non-void pps or pole} that
\begin{itemize}
	\item  $\sigma_\mathrm{per}(T) = \{1\}$;
	\item $\sub$ admits a (unique) natural length function $0 \neq \ell \in K$ with stretching factor $\lambda = r$.
\end{itemize}

Since \(\sigma(M) = r\sigma(T)\) is compact, \(r\) is a pole of the resolvent (and thus isolated) and the only element of the peripheral spectrum of \(M\), it follows that \(M\) has a \emph{spectral gap}, that is, \(\sigma(M)\) has a `second largest element':
\[
r_2 \coloneqq \sup\{|\lambda| \mid \lambda \in \sigma(M), \lambda \neq r\} < r.
\]
We fix a (necessarily unique) eigenmeasure $\mu \in K'$, which we normalise here with $\mu(\ell)=1$.

\begin{remark}
By Theorem \ref{thm: primitive+qc => uniquely ergodic}, the above conditions are satisfied whenever $\sub$ is primitive and  $T$ is quasi-compact. This includes any primitive substitution for which Theorem \ref{thm: quasi-compact subs} applies, so the constructions to follow apply to those relevant examples from the last section.
\end{remark}

The existence of a spectral gap will allow for some control on the discrepancy on the `expected weight' of a weighted selection of tiles across a large supertile relative to the `actual' sum of weights. Given $f \in E$ (a `weight function'), $a \in \A$, and $n \in \N$ we define
\begin{equation} \label{eq: expected}
\Exp(f,a,n) = r^n \ell(a) \cdot \mu(f).
\end{equation}
That is, we simply multiply the length of the $n$-supertile $\sub^n(a)$ by the `average weight' of $f$. Such quantities are prevalent in recent works on bounded displacement equivalence\cite{FGS:BDE,SS:discrepancy}.

\begin{example}
For a primitive substitution on a finite alphabet, $\mu$ is represented by the row vector $f = (f_\alpha,f_\beta,\ldots)$ of frequencies $f_a > 0$ of each letter $a \in \A$. Then the expected number of tiles of type $b$ in $\sub^n(a)$ is
$\Exp(\bbo_b,a,n) = r^n \ell(a) \mu(\bbo_b) = r^n \ell(a) (v \cdot f) = r^n \ell(a) f_b$,
where $v$ is the vector with entry corresponding to $b$ equal to $1$ and all other entries $0$. So we just multiply length of the $n$-supertile by the frequency of occurrence of $b$. More generally, given a weighted selection $v = (v_\alpha,v_\beta,\ldots)$ of tiles, where we have one $v_a \in \R$ for each tile $a \in \A$,
\[
\Exp(f,a,n) = r^n \ell(a) (v \cdot f).
\]
Here, we are identifying $v$ with an arbitrary function $f \in E$, given by $f(a) = v_a$.
\end{example}

We define the \emph{actual} weighted sum of $f$ across an $n$-supertile $\sub^n(a)$ by
\[
\Act(f,a,n) = \sum_{b \triangleleft \sub^n(a)} f(b) = (M^n f)(a),
\]
where the second equality follows from Lemma \ref{lem: powers of operator}. The result below bounds the discrepancy between the expected and actual weight across all possible supertiles. 
For a treatment for primitive substitutions on finite alphabets and applications to questions on balancedness, we refer to \cite{Adam04} and \cite{Adam03}, respectively.

\begin{theorem} \label{thm: discrepancy}
Suppose assumptions \textnormal{(A1)} and \textnormal{(A2)} hold. Then, there is some function $\theta \colon \N \to \R_{>0}$ with $\sqrt[n]{\theta(n)} \to 1$ as $n \to \infty$ so that for all $f \in E$ with $\|f\| \leq 1$, $a \in \A$ and $n \in \N$ we have
\[
|\Exp(f,a,n) - \Act(f,a,n)| \leq \theta(n)(r_2)^n.
\]
In particular, for any $\epsilon > 0$, we have the following for sufficiently large $n$:
\[
|\Exp(f,a,n) - \Act(f,a,n)| \leq (r_2+\epsilon)^n.
\]
\end{theorem}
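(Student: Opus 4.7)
The plan is to rewrite the discrepancy as the value of $(M^n - r^n P)f$ at $a$, where $P$ is the spectral projection associated with the eigenvalue $1$ of $T$, and then apply Gelfand's formula together with the spectral gap to obtain the exponential bound. First I would identify $P$ explicitly. By (A1), $P \coloneqq \lim_{n\to\infty} T^n$ exists in the operator norm and is a projection onto the fixed subspace of $T$; by (A2) and Proposition~\ref{prop: mean ergod + quasipos => unique meas}, this fixed subspace is one-dimensional and spanned by $\ell$, while the fixed subspace of $T'$ is one-dimensional and spanned by $\mu$. Thus $P$ has rank one, so $Pf = \psi(f)\ell$ for some $\psi \in E'$; evaluating at $\ell$ gives $\psi(\ell) = 1$, and $PT = \lim T^{n+1} = P$ yields $T'\psi = \psi$, so $\psi = \mu$ by uniqueness of the normalised eigenmeasure. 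Combining this identification with Lemma~\ref{lem: powers of operator},
\[
\Exp(f,a,n) = r^n (Pf)(a) \quad \text{and} \quad \Act(f,a,n) = r^n (T^n f)(a),
\]
so that $|\Exp(f,a,n) - \Act(f,a,n)| \leq r^n \|T^n - P\| \|f\|$, reducing the theorem to an asymptotic estimate on $\|T^n - P\|$.

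To exploit the spectral gap, I would first verify by induction that $(T - P)^n = T^n - P$ for every $n \geq 1$, using $PT = TP = P$ and $P^2 = P$. The key computation is then $r(T - P) = r_2/r$. For this, I would invoke the Riesz decomposition $E = PE \oplus (I-P)E$ associated with the isolated spectral value $1 \in \sigma(T)$, which is available because (A1) guarantees $1$ to be a simple pole of the resolvent of $T$ \cite[Thm.~2.5]{K:power-conv}. On the one-dimensional invariant subspace $PE = \R\ell$ the operator $T - P$ acts as zero; on the invariant complement $(I - P)E$ the projection $P$ vanishes, so $T - P$ restricts to $T\vert_{(I-P)E}$, whose spectrum equals $\sigma(T) \setminus \{1\}$ by standard spectral theory. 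Hence $\sigma(T - P) = \{0\} \cup (\sigma(T) \setminus \{1\})$ and $r(T - P) = r_2 / r$; Gelfand's formula therefore gives
\[
\|T^n - P\|^{1/n} = \|(T - P)^n\|^{1/n} \longrightarrow r_2/r \quad \text{as } n \to \infty.
\]

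In the non-degenerate case $r_2 > 0$ I would define $\theta(n) \coloneqq r^n \|T^n - P\| / r_2^n$, so that $\sqrt[n]{\theta(n)} \to 1$ and, for every $\|f\| \leq 1$,
\[
|\Exp(f,a,n) - \Act(f,a,n)| \leq \theta(n)\, r_2^n,
\]
which is the first assertion. For the ``in particular'' statement, given $\epsilon > 0$ one picks $n_0$ with $\sqrt[n]{\theta(n)} \leq 1 + \epsilon/r_2$ for all $n \geq n_0$; then $\theta(n)\, r_2^n \leq (r_2 + \epsilon)^n$, as required. The principal technical hurdle in this argument is the clean identification of $P$ together with the spectral computation $r(T - P) = r_2/r$, both of which rely on $1$ being a \emph{simple} pole of the resolvent so that the Riesz decomposition perfectly isolates this eigenvalue; from there the uniform estimate is a routine consequence of Gelfand's formula.
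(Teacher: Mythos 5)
Your proposal is correct and follows essentially the same route as the paper: both identify the discrepancy with $r^n$ times $T^n$ applied to an element of the complement of the eigenline $\R\ell$ (your $(I-P)E$ is exactly the paper's $\Pi=\ker P=\{f \mid \mu(f)=0\}$, since $Pf=\mu(f)\ell$), establish that the spectral radius of $T$ on that complement is $r_2/r$, and conclude via Gelfand's formula. The only difference is bookkeeping --- the paper proves $\sigma(T_\Pi)=\sigma(T)\setminus\{1\}$ by hand in three elementary lemmas where you invoke the Riesz decomposition for the isolated simple pole at $1$ --- and you are in fact slightly more careful in flagging the degenerate case $r_2=0$, which the paper's argument also tacitly excludes.
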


In the above, recall that \(r_2\) denotes the maximum modulus of element of \(\sigma(M) \setminus \{r\}\). Before proving this result, we require a few technical lemmas regarding the spectrum of the operator restricted to the subspace
\[
\Pi \coloneqq \{f \in E \mid \mu(f) = 0\}.
\]
By uniform power convergence, we have $T^n \to P$ uniformly as $n \to \infty$, where $P$ is a projection to the eigenline $\Lambda = \langle \ell \rangle_\R$. Hence $\Pi = \ker(P)$ is a closed, co-dimension one subspace and we have $E = \Lambda \oplus \Pi$. We let $T_\Pi$ denote the restriction to $\Pi$.

\begin{lem}
Suppose assumptions \textnormal{(A1)} and \textnormal{(A2)} hold. Then, we have the inclusion $\sigma(T_\Pi) \subseteq \sigma(T)$.
\end{lem}

\begin{proof}
Since $P$ is bounded (and thus continuous), its kernel is closed and so $T_\Pi$ is a bounded operator on a Banach space. For such operators, $\tau \in \sigma(T)$ if and only if $T-\tau \mathbb{I}$ is not bijective. Suppose that $\tau \in \sigma(T_\Pi)$. First assume that $T-\tau\mathbb{I}$ is not injective, so that $T(f) = \tau f$ for some $f \in \Pi$. The same $f$ demonstrates that $\tau \in \sigma(T)$.

So suppose instead that $T-\tau\mathbb{I}$ is not surjective, and thus there is some $g \in \Pi$ with $Tf-\tau f \neq g$ for any $f \in \Pi$. Assume, on the other hand, that $\tau \notin \sigma(T)$ so that $Tf-\tau f = g$ for some $f \in E$. We may write $f = a + b $ for $a \in \Lambda$ and $b \in \Pi$. Then
\[
T(a+b)-\tau(a+b) = (Ta-\tau a) + (Tb-\tau b) = g \in \Pi.
\]
But since $\Lambda$ and $\Pi$ are complementary this implies that $Ta - \tau a = 0$. If $a \neq 0$ it follows that $\tau = 1$ (since then $a$ is a non-negative multiple of $\ell$), which we already know is in $\sigma(T)$. Otherwise, we have that $Tb - \tau b = g$, contradicting that $Tb - \tau b \neq g$ for all $b \in \Pi$.
\end{proof}

We get the following consequence of uniform power convergence:

\begin{lem}
We have that $1 \notin \sigma(T_\Pi)$.
\end{lem}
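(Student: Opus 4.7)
The strategy is to leverage the uniform power convergence assumption (A1) to show that the spectral radius of the restricted operator $T_\Pi$ is strictly less than $1$, which immediately gives $1 \notin \sigma(T_\Pi)$.

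First I would verify that $T_\Pi$ is actually well-defined as an operator on $\Pi$, i.e., that $T(\Pi) \subseteq \Pi$. Since $\mu$ is an eigenmeasure of $T'$ (that is, $T'\mu = \mu$), for any $f \in \Pi$ we get
\[
\mu(Tf) = (T'\mu)(f) = \mu(f) = 0,
\]
so $Tf \in \Pi$. Thus $T_\Pi \colon \Pi \to \Pi$ is a bounded linear operator on the closed subspace $\Pi$.

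Next, I would exploit the fact that $P|_\Pi \equiv 0$, since $\Pi = \ker(P)$ by construction. Consequently, for any $f \in \Pi$ we have $T^n f = T^n f - Pf = (T^n - P)f$, which yields the operator norm bound
\[
\|T_\Pi^n\| = \sup_{f \in \Pi,\ \|f\| \leq 1} \|T^n f\| \leq \|T^n - P\|.
\]
By assumption (A1), $\|T^n - P\| \to 0$ as $n \to \infty$, so $\|T_\Pi^n\| \to 0$. In particular, $\|T_\Pi^N\| < 1$ for some $N \in \N$.

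Finally, applying Gelfand's formula to the bounded operator $T_\Pi$ on the Banach space $\Pi$, we conclude that
\[
r(T_\Pi) = \lim_{n \to \infty} \|T_\Pi^n\|^{1/n} \leq \|T_\Pi^N\|^{1/N} < 1,
\]
so $1$ is not in the spectrum of $T_\Pi$, as required. There is no substantial obstacle here; the lemma is essentially a direct reformulation of uniform power convergence restricted to the complementary subspace of the fixed line.
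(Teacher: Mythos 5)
Your proof is correct and follows essentially the same route as the paper: both arguments use uniform power convergence to get $\|T_\Pi^n\| \to 0$ (the paper deduces $P f = 0$ for $f \in \Pi$ from $\mu(T^n f) = \mu(f) = 0$, while you invoke $\Pi = \ker(P)$ directly, which the paper establishes in the surrounding setup) and then conclude $r(T_\Pi) < 1$ via Gelfand's formula. Your explicit check that $T(\Pi) \subseteq \Pi$ via $T'\mu = \mu$ is a small but welcome addition that the paper leaves implicit.
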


\begin{proof}
Let \(f \in \Pi\) be arbitrary; we have \(T^n f \to P(f) = c \cdot \ell\) as \(n \to \infty\), uniformly in \(f\). Clearly \(c = 0\), since \(\mu(T^n f) = \mu(f) = 0\) for all \(n\) and \(\mu(c \ell) = c\). Thus \(T^n_\Pi \to 0\) uniformly so $\|T_\Pi^n\| < 1$ for some $n \in \N$. It follows from Gelfand's formula that $r(T_\Pi) < 1$, and thus $1 \notin \sigma(T_\Pi)$.
\end{proof}

\begin{lem}
Suppose that $\kappa \in \sigma(T)$ with $\kappa \neq 1$. Then $\kappa \in \sigma(T_\Pi)$.
\end{lem}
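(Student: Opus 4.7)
The plan is to prove the contrapositive by exploiting that $T$ preserves the direct sum decomposition $E = \Lambda \oplus \Pi$. The first step is to verify this invariance: $T(\Lambda) \subseteq \Lambda$ is immediate since $T\ell = \ell$, and $T(\Pi) \subseteq \Pi$ follows from observing that $\mu(Tf) = \langle T'\mu, f \rangle = \mu(f) = 0$ whenever $f \in \Pi$, using that $\mu$ is normalised to be a fixed vector of $T'$. Note that $\Pi = \ker(\mu)$ is closed (since $\mu \in E'$ is continuous) and that $P$ provides a continuous projection onto $\Lambda$ along $\Pi$, so this decomposition is topological: the projections $P$ and $\Id - P$ are both bounded.

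With respect to this splitting, the operator $T - \kappa \Id$ decomposes as $(1-\kappa)\Id_\Lambda \oplus (T_\Pi - \kappa \Id_\Pi)$. Suppose, for contradiction, that $\kappa \notin \sigma(T_\Pi)$, so that $T_\Pi - \kappa\Id_\Pi$ is bijective on the Banach space $\Pi$ and hence has bounded inverse by the open mapping theorem. Since $\kappa \neq 1$ by hypothesis, the scalar operator $(1-\kappa)\Id_\Lambda$ is also invertible with bounded inverse. Combining these via the direct sum — using the bounded projections $P$ and $\Id - P$ to extract the $\Lambda$- and $\Pi$-components of a given $f \in E$ — yields a bounded inverse for $T - \kappa \Id$ on $E$, contradicting the assumption $\kappa \in \sigma(T)$.

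The only delicate point is ensuring that the combined inverse is bounded, but this is automatic once the topological direct sum decomposition is in hand; no norm estimate is required beyond an application of the open mapping theorem. So the argument is essentially formal once one unwinds what it means for $T$ to preserve both summands, and the main thing to be careful about is simply the invariance $T(\Pi) \subseteq \Pi$, which boils down to the duality identity for the eigenmeasure $\mu$.
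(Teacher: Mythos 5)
Your proof is correct, and it takes a somewhat different route from the paper's. You argue by contraposition: after checking that both $\Lambda$ and $\Pi$ are $T$-invariant and that $E = \Lambda \oplus \Pi$ is a topological direct sum (with bounded projections), you observe that $T - \kappa\Id$ is block-diagonal, namely $(1-\kappa)\Id_\Lambda \oplus (T_\Pi - \kappa\Id_\Pi)$, so invertibility of both blocks --- the first because $\kappa \neq 1$, the second by the hypothesis $\kappa \notin \sigma(T_\Pi)$ together with the open mapping theorem --- assembles into a bounded inverse of $T - \kappa\Id$, contradicting $\kappa \in \sigma(T)$. The paper instead argues directly by cases on how $T - \kappa\Id$ fails to be bijective: if it is not injective, it decomposes an eigenvector as $c\ell + b$ and shows $c = 0$, so the eigenvector lies in $\Pi$; if it is not surjective, missing some $g = c\ell + b$, it shows $T_\Pi - \kappa\Id_\Pi$ must miss $b$. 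Both arguments rest on exactly the same decomposition and the invariance $T(\Pi) \subseteq \Pi$ (which the paper leaves implicit and you rightly make explicit via $\mu(Tf) = \langle T'\mu, f\rangle = \mu(f)$). What yours buys is uniformity and generality: it is the standard ``spectrum of a direct sum of invariant subspaces'' argument, treating the injectivity and surjectivity failures in one stroke and in fact giving $\sigma(T) \subseteq \{1\} \cup \sigma(T_\Pi)$ outright. What the paper's buys is elementarity: it constructs explicit witnesses and needs no appeal to the open mapping theorem or to boundedness of the assembled inverse.
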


\begin{proof}
Let $\kappa \in \sigma(T)$ with $\kappa \neq 1$. First assume that $Tf - \kappa f = 0$ for some $f \in E$ and write $f = a + b$ for $a = c \ell \in \Lambda$ and $b \in \Pi$. Then
\[
Tf - \kappa f = c(1-\kappa)\ell + (Tb - \kappa b) = 0.
\]
Since $c(1-\kappa)\ell \in \Lambda$, $Tb-\kappa b \in \Pi$, and the subspaces $\Lambda$ and $\Pi$ are complementary, it is necessary that $\kappa = 1$ or $c=0$. Since we assume that $\kappa \neq 1$, we have that $c=0$ and thus $f \in \Pi$, so that $\kappa \in \sigma(T_\Pi)$ too.

So suppose instead that $Tf - \kappa f \neq g$ for all $f \in E$ and some $g \in E$. Write $g = a+b = c \ell + b$ for $a \in \Lambda$ and $b \in \Pi$. Suppose that $Tf - \kappa f = b$ for some $f \in \Pi$. Then
\[
T\left(\frac{c}{1-\kappa} \ell + f\right) - \kappa\left(\frac{c}{1-\kappa}\ell + f\right) = c\ell + (Tf - \kappa f) = c\ell + b = g,
\]
contradicting that $Tx - \kappa x \neq g$ for all $x \in E$. Hence $Tf - \kappa f \neq b$ for all $f \in \Pi$ and so $\kappa \in \sigma(T_\Pi)$, as required.
\end{proof}

The three lemmas above imply the following result. 

\begin{coro} \label{cor: spectral gap}
We have $\sigma(T) = \sigma(T_\Pi) \sqcup \{1\}$ or, equivalently, $\sigma(M) = \sigma(M_\Pi) \sqcup \{r\}$.
\end{coro}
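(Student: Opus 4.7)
The plan is essentially to chain together the three preceding lemmas, which already do all the real work. First I would note that by the first lemma we have the inclusion $\sigma(T_\Pi) \subseteq \sigma(T)$, and by the second lemma $1 \notin \sigma(T_\Pi)$, so together $\sigma(T_\Pi) \subseteq \sigma(T) \setminus \{1\}$. Conversely, the third lemma gives $\sigma(T) \setminus \{1\} \subseteq \sigma(T_\Pi)$. Combining these two inclusions yields $\sigma(T_\Pi) = \sigma(T) \setminus \{1\}$.

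Next I would observe that $1 \in \sigma(T)$ (indeed, $T\ell = \ell$ with $\ell \neq 0$, so $1$ is an eigenvalue of $T$). Hence we may write $\sigma(T)$ as the disjoint union $\sigma(T) = \sigma(T_\Pi) \sqcup \{1\}$, which is the first claimed equality. For the equivalent statement at the level of $M$, I would simply use that $M = rT$ and $M_\Pi = rT_\Pi$ (since the subspace $\Pi$ is defined intrinsically and the scaling is global), so that $\sigma(M) = r \sigma(T) = r(\sigma(T_\Pi) \sqcup \{1\}) = \sigma(M_\Pi) \sqcup \{r\}$.

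There is essentially no obstacle here — the three preceding lemmas have already handled the nontrivial containments and the exclusion of $1$ from $\sigma(T_\Pi)$. The only thing to be careful about is confirming that $1$ genuinely lies in $\sigma(T)$, which is immediate from the existence of the natural length function $\ell$ guaranteed by assumption (A1) via Theorem~\ref{thm: unique length function}, and that the union is disjoint, which is guaranteed by Lemma immediately preceding the corollary.
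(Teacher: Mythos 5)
Your proposal is correct and is precisely the argument the paper intends: the corollary is stated without proof because it follows immediately by combining the three preceding lemmas exactly as you describe, with $1 \in \sigma(T)$ coming from $T\ell = \ell$ and the $M$-statement from the rescaling $\sigma(M) = r\,\sigma(T)$.
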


\begin{proof}[Proof of Theorem~\textnormal{\ref{thm: discrepancy}}]
We may write
\[
|\Exp(f,a,n) - \Act(f,a,n)| = |r^n\ell(a) \cdot \mu(f) - (M^n f)(a)| = |M^n (\mu(f)\ell - f)(a)|.
\]
Since $M = r T$ we thus have
\[
|\Exp(f,a,n) - \Act(f,a,n)| = r^n |T^n(\mu(f) \ell - f)(a)| = r^n|(T^n v)(a)|,
\]
where $v = \mu(f)\ell - f$. We have that $\mu(v) = \mu(\mu(f)\ell - f) = \mu(f)(\mu(\ell)) - \mu(f) = 0$ (since we normalise with $\mu(\ell) = 1$) so $v \in T_\Pi$.

By Corollary \ref{cor: spectral gap}, $r(T_\Pi) = r_2/r$. By Gelfand's formula for the spectral radius,
\[
\lim_{n \to \infty} \sqrt[n]{\|T_\Pi^n\|} = r(T_\Pi), \text{ hence } \sqrt[n]{\|T_\Pi^n\|} \leq \psi(n)(r_2/r),
\]
where $\psi(n) \to 1$ as $n \to \infty$. Taking $n$th powers and substituting into the above,
\[
|\Exp(f,a,n) - \Act(f,a,n)| = r^n|(T^n v)(a)| = r^n|(T_\Pi^n v)(a)| \leq r^n \|T_\Pi^n\|\|v\| \leq \|v\|(\psi(n))^n(r_2)^n,
\]
where $(\psi(n))^n$ has $n$th roots converging to $1$ as $n \to \infty$. Note that $\|v\| = \|\mu(f)\ell - f\|$ is bounded above by a constant $c$, since $\mu$ is bounded and $\|f\| \leq 1$, so the discrepancy estimate follows taking $\theta(n) = c (\psi(n))^n$.

Let $\delta > 0$ be arbitrary. Then $h(n) = (1+\delta)^n$ is such that $\sqrt[n]{h(n)} = (1+\delta) > \sqrt[n]{\theta(n)}$, and thus $h(n) > \theta(n)$, for sufficiently large $n$. So for such $n$,
\[
|\Exp(f,a,n) - \Act(f,a,n)| \leq \theta(n)(r_2)^n \leq h(n)(r_2)^n \leq (1+\delta)^n(r_2)^n = ((1+\delta)r_2)^n.
\]
Setting $0< \delta < \epsilon/r_2$ establishes the second result.
\end{proof}

\section{Higher dimensions} \label{sec: higher dimensions}

Many constructions and results seen here, excluding those pertaining to natural length functions, also apply to the higher dimensional setting. We make some initial observations on these extensions in this final section.

\subsection{ILC tilings and substitutions in higher dimensions}
In contrast to our symbolic setup in the \(d=1\) case, it makes sense to begin with the geometric substitution rule on a compact prototile set. Many of the following definitions are inspired by \cite{PFS:fusion-ILC} and are extensions of those in Definition~\ref{def: geometric hulls}. We let \(\A\) be a compact, Hausdorff space, now considered as a set of \emph{labels}. We write
\[
S(\R^d) \coloneqq \{X \subseteq \R^d \mid X \text{ is non-empty and compact}\},
\]
which we consider as a topological space, with topology induced by the Hausdorff distance (or equivalently the Vietoris topology). A \emph{prototile} (with label in \(\A\)) is then a pair \(p = (s,a)\), whose \emph{support} is denoted by \(\mathrm{supp}(p) \coloneqq s \in S(\R^d)\). We assume that each \(\mathrm{supp}(p)\) is homeomorphic to a closed ball in \(\R^d\) and that the origin lies in its interior. We further assume that the associated fusion rule is van Hove, in the sense of \cite{PFS:fusion-ILC}. This does not seem too restrictive and holds, loosely speaking, so long as the boundaries of tiles are not too wild. Certain examples can easily be seen to be van Hove. This includes the pinwheel below, since in this example all supertiles are inflates of a single (up to rigid motion) polygonal shape. We let \(\mathrm{label}(p) \coloneqq a\) denote the \emph{label} of prototile \(p\).

A \emph{prototile set} \(\mathcal{P}\) is then a set of prototiles which is compact as a subspace of \(S(\R^d) \times \A\). Without loss of generality, \(\mathrm{label} \colon \mathcal{P} \to \A\) is a homeomorphism (see Remark \ref{rem: homeo to labels}) i.e., all labels are realised and distinct prototiles have distinct labels (continuity of the inverse follows from compactness). Then \(\mathrm{supp} \circ \mathrm{label}^{-1} \colon \A \to S(\R^d)\) is continuous, since both \(\mathrm{supp}\) and \(\mathrm{label}^{-1}\) are continuous. This condition may be interpreted as saying that, if a sequence of labels converges, then so do the supports of their tiles, a condition required in \cite{PFS:fusion-ILC}.

\begin{remark} \label{rem: homeo to labels}
Restricting \(\A\) to be compact causes no loss of generality given compactness of \(\mathcal{P}\) (as otherwise we may restrict labels to \(\mathrm{label}(\mathcal{P})\)). Similarly, the labelling being a homeomorphism causes no loss of generality. Indeed, otherwise, replace \(\A\) with \(\mathcal{P}\); more precisely replace any prototile \(p = (s,a)\) with \((s,p)\) (i.e., labelling the prototile with itself). Indeed, the map
\[
f \colon \mathcal{P} \to S(\R^d) \times \mathcal{P}, \quad p = (s,a) \mapsto (\mathrm{supp}(p),p)
\]
is continuous and bijective onto the new prototile set \(\mathcal{P}' = \{(\mathrm{supp}(p),p) \mid p \in \mathcal{P}\}\), with labels now in \(\mathcal{P}\), so \(\mathcal{P} \cong \mathcal{P}'\) in a way preserving tile supports. An alternative is to drop the `labels' and always work with \(\mathcal{P}\) in place of \(\A\) (or allowing different shapes of tiles with the same label). However, we will choose here to use the conventions in \cite{PFS:fusion-ILC}.
\end{remark}

For a prototile \(p = (s,a)\) and a vector \(x \in \R^d\), we define \(p+x \coloneqq (s+x,a)\). Given a prototile set \(\mathcal{P}\), a \emph{tile} is a translate of a prototile. A \emph{patch} is a finite set of tiles whose supports have mutually disjoint interiors and connected union. We let \(\mathcal{P}^*\) denote the set of patches. This can be topologised as follows. Consider an arbitrary patch \(P \in \mathcal{P}^*\) with tiles \(t_1 = p_1 + x_1\), \ldots, \(t_n = p_n + x_n\), where \(p_i \in \mathcal{P}\) and \(x_i \in \R^d\). For arbitrary open neighbourhoods \(V_i \subset \mathcal{P}\) of each \(p_i\) and \(\epsilon > 0\), we define an open neighbourhood \(U(P,V_1,\ldots,V_n,\epsilon)\) of \(P\) as all patches \(P' \in \mathcal{P}^*\) having a bijection \(h\) from the tiles of \(P\) to those of \(P'\) so that, for each \(i\) and \(h(p_i + x_i) = p' + x'\), we have \(p' \in V_i\) and \(|x'-x_i| < \epsilon\). A \emph{tiling} is a covering of \(\R^d\) by tiles with mutually disjoint interiors. The above topology on \(\mathcal{P}^*\) also topologises the space of all tilings, taking a basic open set for each open subset \(U \subseteq \mathcal{P}^*\) to consist of all tilings that contain a patch in \(U\). So `small' open sets are defined by basic open sets \(U(P,V_1,\ldots,V_n,\epsilon)\) for \(P\) covering a large ball about the origin, each \(V_i\) a small neighbourhood of each prototile \(p_i\) and \(\epsilon > 0\) small; tilings in this basic open set thus contain a patch covering a large ball about the origin with tiles that may be paired with those of \(P\) in a way that only shifts tiles (translates and labels) a small amount.

A \emph{substitution rule} (or \emph{stone inflation}), with inflation factor \(\lambda > 1\), is defined to be a continuous map \(\sub \colon \mathcal{P} \to \mathcal{P}^*\) so that, for each \(p \in \mathcal{P}\), we have
\[
\lambda \ \mathrm{supp}(p) = \bigcup_{t \in \sub(p)} \mathrm{supp}(t).
\]
An \emph{\(n\)-supertile} is a patch \(\sub^n(p)\), for \(n \in \N_0\) and \(p \in \mathcal{P}\), where \(\sub\) is extended to patches in the obvious way. This again defines \emph{generated patches}, the subpatches of supertiles, and \emph{legal patches}, given as the closure of the collection of generated patches. We then obtain the \emph{continuous hull} (or \emph{tiling space}) \(\Omega_\sub\), consisting of all tilings whose finite patches are legal.

The space \(\Omega:=\Omega_\sub\) is compact and Hausdorff, and translation by \(\R^d\) acts continuously on it, making \((\Omega,\R^d)\) a topological dynamical system. By construction, 
elements of $\Omega$ are fusion tilings, and the results of \cite{PFS:fusion-ILC} still apply to this slightly generalised setting, where \(\A\) is compact Hausdorff (but is not given a metric).

\subsection{Results on unique ergodicity in higher dimensions}
Because our results on unique ergodicity mostly proceed in terms of the substitution operator, it turns out that essentially identical proofs of most results here follow also in the higher dimensional setting.

We still have the Banach space \(E = C(\mathcal{P})\) (recall that \(\mathcal{P} \cong \A\)) and the substitution operator \(M\) on it (Eq.~\eqref{eq: substitution operator}) where, for \(f \colon \mathcal{P} \to \R\), we define \((Mf)(p)\), for a prototile \(p \in \mathcal{P}\), to be
\[
(Mf)(p) \coloneqq \sum_{t \in \sub(p)} f(t),
\]
summing with multiplicities. Here, we evaluate \(f(p_i+x) = f(p_i)\) for all prototiles \(p_i \in \mathcal{P}\) and translates \(x \in \R^d\). The notion of irreducibility and primitivity may be defined just as before, and results equating them to properties of the substitution operator (such as Propositions \ref{prop: irreducible M} and \ref{prop: primitive <=> st. positive iterates}) still hold, with identical proofs. The notion of a `natural length function' no longer applies, but the function \(\ell \in E\), given by setting \(\ell(a)\) as the volume of \(a\), will be a positive eigenvector of \(M\), with eigenvalue \(\lambda^d\). 

Most importantly here, Theorem~\ref{thm: ILC-fusion-invmeas} relating invariant measures of the tiling dynamical system to sequences of volume normalised and transition consistent measures still apply. Then Theorem \ref{thm:unique ergod FS} continues to hold in this higher dimensional setting, as does the part of Theorem \ref{thm:inv measures - tiling} on unique ergodicity:

\begin{theorem}
Let \(\sub\) be an irreducible substitution in \(\R^d\), on a compact Hausdorff prototile set \(\mathcal{P}\) with \(T = M/r\) strongly power convergent. Then \((\Omega,\R^d)\) is uniquely ergodic.
\end{theorem}

The higher dimensional analogue of Theorem \ref{thm: quasi-compact subs} similarly follows by the same proofs. The important structure of constant length substitutions required for the proof of Theorem \ref{thm: CL UE} is really that of having continuous columns. Thus, we define a \emph{constant length substitution} (in \(\R^d\)) to be a substitution for which we have continuous maps \(\sub_i \colon \mathcal{P} \to \mathcal{P}\) for \(i=1\), \ldots, \(L\), the \emph{columns}, so that for each \(p \in \mathcal{P}\) there is a bijection between the tiles of \(\sub(p)\) and \(\{\sub_i(p)\}_{i=1}^L\). This is exactly what is needed for Equation \eqref{eq: constant length substitution operator} to apply with each column continuous. Then, using an identical proof to that of Theorem \ref{thm: CL UE}, we obtain:

\begin{theorem} \label{thm: CS UE}
Suppose that \(\sub\) is a primitive, constant length substitution in \(\R^d\) whose columns generate an equicontinuous semigroup \(\Phi\). Then \(T\) is strongly power convergent and the tiling dynamical system associated with \(\sub\) is uniquely ergodic.
\end{theorem}

\begin{remark}
For \(d=1\) the above notion of constant length coincides with the usual one i.e., \(|\sub(p)|\) is constant in \(p \in \mathcal{P}\). It follows from the definition of the topology on \(\mathcal{P}^*\) that \(|\sub(p)|\) is locally constant and thus globally constant if \(\mathcal{P}\) is connected. However, it is not clear to us that \(|\sub(p)|\) being globally constant is sufficient for the constant length condition defined above for \(d > 1\), which requires a type of continuous indexing of subtiles.
\end{remark}

We now demonstrate how unique ergodicity of the Conway--Radin pinwheel tilings \cite{R:pinwheel} follows in a straightforward way from our results. Interestingly this approach seems quite distinct from others \cite{Fre08,MPS06}, which make use of Weyl's criterion at some stage to show statistical circular symmetry. Instead, this fact follows from unique ergodicity here because, from unique ergodicity, the unique eigenmeasure on the prototile space must correspond to Lebesgue measure.

\begin{example}
The pinwheel substitution has prototile set \(\mathcal{P}\) consisting of a \(1\)--\(2\)--\(\sqrt{5}\) distinguished triangle \(p\) and all linear isometries \(gp\) of it, for \(g \in \mathrm{O}(2,\R)\). Thus, \(\mathcal{P}\) is homeomorphic to the disjoint union of two copies of \(S^1\). The substitution \(\sub\) is continuous and constant length: we may arbitrarily label the tiles of \(\sub(p)\)  with distinct labels from \(1\) to \(5\) to define \(\sub_i(p)\), and then we define \(\sub_i(gp) = g \, \sub_i(p)\) (note that this is well defined and continuous, since \(gp = p\) implies that \(g = \mathrm{id}\)).

Then each column \(\sub_i \colon \mathcal{P} \to \mathcal{P}\) is determined by some rigid motion \(g_i \in \mathrm{O}(2,\R)\), by setting \(\sub_i(gp) = g(g_i(p))\). Let \(\alpha\) be clockwise rotation by \(\arctan\tfrac{1}{2}\), \(\beta\) be anticlockwise rotation by \(\tfrac{\pi}{2}\) and \(\gamma\) be reflection about the \(y\)-axis, all acting on the left. The columns (see Figure \ref{fig: pinwheel}) are then determined by
\[
g_1 = \alpha, \quad \quad \ g_2 = \alpha \beta^2, \quad\quad  \ g_3 = g_4 = \alpha \beta^2 \gamma \quad\quad  \text{ and }\quad\quad   g_5 = \alpha \beta \gamma .
\]

\begin{figure}[h]
\centering
\def\svgwidth{.9\columnwidth}
%% Creator: Inkscape 1.3.2 (091e20e, 2023-11-25, custom), www.inkscape.org
%% PDF/EPS/PS + LaTeX output extension by Johan Engelen, 2010
%% Accompanies image file '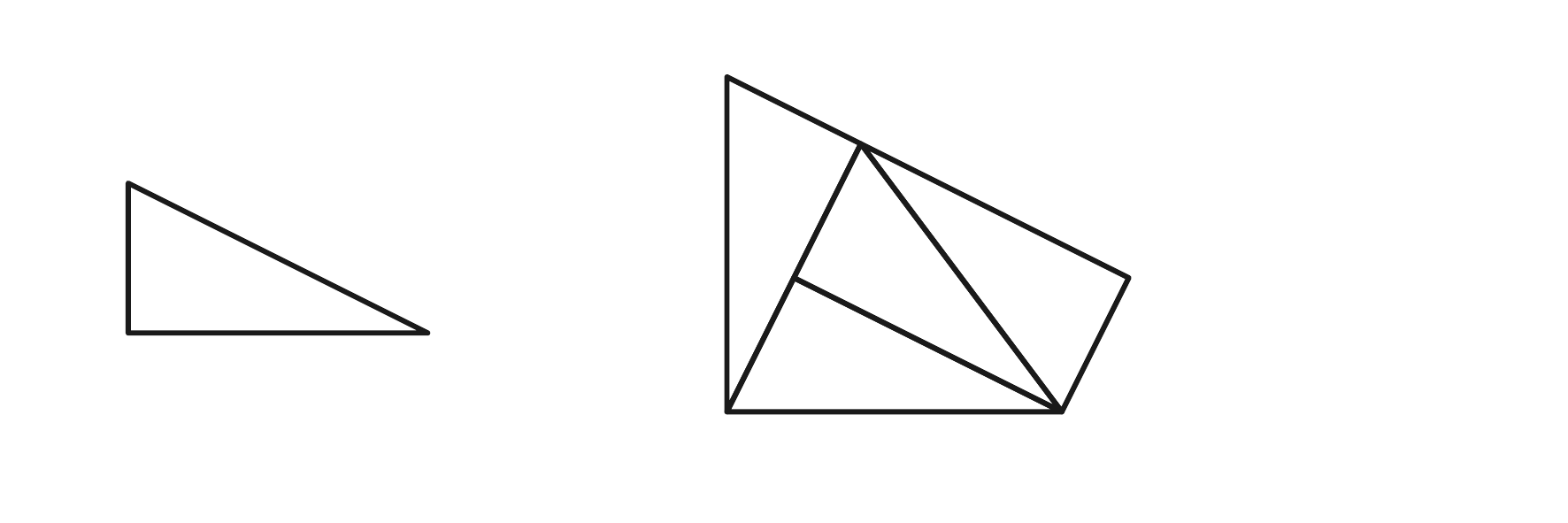' (pdf, eps, ps)
%%
%% To include the image in your LaTeX document, write
%%   \input{<filename>.pdf_tex}
%%  instead of
%%   \includegraphics{<filename>.pdf}
%% To scale the image, write
%%   \def\svgwidth{<desired width>}
%%   \input{<filename>.pdf_tex}
%%  instead of
%%   \includegraphics[width=<desired width>]{<filename>.pdf}
%%
%% Images with a different path to the parent latex file can
%% be accessed with the `import' package (which may need to be
%% installed) using
%%   \usepackage{import}
%% in the preamble, and then including the image with
%%   \import{<path to file>}{<filename>.pdf_tex}
%% Alternatively, one can specify
%%   \graphicspath{{<path to file>/}}
%% 
%% For more information, please see info/svg-inkscape on CTAN:
%%   http://tug.ctan.org/tex-archive/info/svg-inkscape
%%
\begingroup%
  \makeatletter%
  \providecommand\color[2][]{%
    \errmessage{(Inkscape) Color is used for the text in Inkscape, but the package 'color.sty' is not loaded}%
    \renewcommand\color[2][]{}%
  }%
  \providecommand\transparent[1]{%
    \errmessage{(Inkscape) Transparency is used (non-zero) for the text in Inkscape, but the package 'transparent.sty' is not loaded}%
    \renewcommand\transparent[1]{}%
  }%
  \providecommand\rotatebox[2]{#2}%
  \newcommand*\fsize{\dimexpr\f@size pt\relax}%
  \newcommand*\lineheight[1]{\fontsize{\fsize}{#1\fsize}\selectfont}%
  \ifx\svgwidth\undefined%
    \setlength{\unitlength}{850.39370079bp}%
    \ifx\svgscale\undefined%
      \relax%
    \else%
      \setlength{\unitlength}{\unitlength * \real{\svgscale}}%
    \fi%
  \else%
    \setlength{\unitlength}{\svgwidth}%
  \fi%
  \global\let\svgwidth\undefined%
  \global\let\svgscale\undefined%
  \makeatother%
  \begin{picture}(1,0.33333333)%
    \lineheight{1}%
    \setlength\tabcolsep{0pt}%
    \put(0,0){\includegraphics[width=\unitlength,page=1]{pinwheel.pdf}}%
    \put(0.51790812,0.0950607){\color[rgb]{0,0,0}\makebox(0,0)[lt]{\lineheight{1.25}\smash{\begin{tabular}[t]{l}$\alpha \beta^2 \gamma$\end{tabular}}}}%
    \put(0.54959508,0.16211953){\color[rgb]{0,0,0}\makebox(0,0)[lt]{\lineheight{1.25}\smash{\begin{tabular}[t]{l}$\alpha$\end{tabular}}}}%
    \put(0.64000984,0.14401014){\color[rgb]{0,0,0}\makebox(0,0)[lt]{\lineheight{1.25}\smash{\begin{tabular}[t]{l}$\alpha \beta^2$\end{tabular}}}}%
    \put(0.47140129,0.21277489){\color[rgb]{0,0,0}\makebox(0,0)[lt]{\lineheight{1.25}\smash{\begin{tabular}[t]{l}$\alpha \beta \gamma$\end{tabular}}}}%
    \put(0,0){\includegraphics[width=\unitlength,page=2]{pinwheel.pdf}}%
    \put(0.73155609,0.09506078){\color[rgb]{0,0,0}\makebox(0,0)[lt]{\lineheight{1.25}\smash{\begin{tabular}[t]{l}$\alpha \beta^2 \gamma$\end{tabular}}}}%
  \end{picture}%
\endgroup%

\caption{The pinwheel substitution \label{fig: pinwheel}}
\end{figure}

Note that we have the relations \(\alpha\beta = \beta\alpha\), \(\gamma^2 = \mathrm{id}\), \(\alpha\gamma = \gamma \alpha^{-1}\) and \(\beta\gamma = \gamma\beta^{-1}\). Since \(\mathrm{O}(2,\R)\) is a compact Lie group, the semigroup \(\Phi\) generated by the columns is uniformly equicontinuous. Primitivity of the substitution is easily established: \(\sub^2(p)\) contains (up to translation) a copy of \(p\) (since \(g_3^2(p) = p\)), a reflection of \(p\) (since \(g_4(g_2(p)) = \gamma(p)\)) and a rotation of \(p\) by the irrational angle \(2\arctan\tfrac{1}{2}\) (since \(g_1^2(p) = \alpha^2(p)\)), so that \(\sub^n(gp)\) fills \(\mathcal{P}\) arbitrarily densely, uniformly in \(g \in \mathrm{O}(2,\R)\). Hence, unique ergodicity follows from Theorem \ref{thm: CS UE}. In a similar way, any primitive generalised pinwheel tiling space \cite{S:pinwheel} is uniquely ergodic.
\end{example}

\section*{Acknowledgements}

The authors would like to thank Michael Baake, Joel Feinstein, Natalie Priebe Frank, Dirk Frettl\"oh, Alexey Garber, Philipp Gohlke, Gerhard Keller, Samuel Petite, Julia Slipantschuk, Nicolae Strungaru and Reem Yassawi for valuable discussions. We thank the referees for their valuable comments. NM is funded by the German Academic Exchange Service (DAAD) via the Postdoctoral Researchers International Mobility Experience (PRIME) Fellowship programme. DR 
acknowledges the support of the German Research Foundation (DFG) via SFB1283/1 2019-317210226. JW expresses his gratitude to the same SFB of the Faculty of Mathematics of Bielefeld University, where a part of this work was completed.

\bibliography{tilings}
\bibliographystyle{plain}

\end{document}